\theoremstyle{definition}
 \newtheorem{dfn}{Definition}[section]
 \newtheorem{remark}[dfn]{Remark}  
\theoremstyle{plain}
 \newtheorem{thm}[dfn]{Theorem}
 \newtheorem{lem}[dfn]{Lemma}
\numberwithin{equation}{section}
\newcommand{\sP}{\mathscr{P}}
\newcommand{\sS}{\mathscr{S}}
\newcommand{\sR}{\mathscr{R}}
\newcommand{\sL}{\mathscr{L}}
\newcommand{\sF}{\mathscr{F}}
\newcommand{\lr}[1]{\mathrm{L}_{#1}}
\newcommand{\rB}{{\mathrm{B}}}
\newcommand{\rH}{{\mathrm{H}}}
\newcommand{\rL}{{\mathrm{L}}}
\newcommand{\rW}{{\mathrm{W}}}
\newcommand{\rT}{{\mathrm{T}}}
\newcommand{\dv}{{\rm div}\,}
\newcommand{\BB}{{\mathbb B}}
\newcommand{\BR}{{\mathbb R}}
\newcommand{\BC}{{\mathbb C}} 
\newcommand{\BN}{{\mathbb N}}
\newcommand{\BT}{{\mathbb T}}
\newcommand{\torus}{{\mathbb T}}
\newcommand{\BZ}{{\mathbb Z}}
\newcommand{\CA}{{\mathcal A}}
\newcommand{\CB}{{\mathcal B}}
\newcommand{\CI}{{\mathcal I}}
\newcommand{\CL}{{\mathcal L}}
\newcommand{\CN}{{\mathcal N}}
\newcommand{\CR}{{\mathcal R}}
\newcommand{\CT}{{\mathcal T}}
\newcommand{\dd}{{\mathrm d}}
\newcommand{\e}{{\mathrm e}}
\newcommand{\bF}{{\mathbf F}}
\newcommand{\bG}{{\mathbf G}}
\newcommand{\bH}{{\mathbf H}}
\newcommand{\bh}{{\mathbf h}}
\newcommand{\bw}{{\mathbf w}}
\newcommand{\bv}{{\mathbf v}}
\newcommand{\bu}{{\mathbf u}}
\newcommand{\bff}{{\mathbf f}}
\newcommand{\bg}{{\mathbf g}}
\newcommand{\fp}{{\mathfrak p}}
\newcommand{\fq}{{\mathfrak q}}
\newcommand{\fr}{{\mathfrak r}}
\newcommand{\pd}{\partial}
\begin{document}
\title{Viscous flow past a translating body with oscillating boundary}

\author{Thomas Eiter\thanks{Weierstrass Institue for Applied Analysis and 
Stochastics, Mohrenstr. 39, 10117 Berlin, Germany.\endgraf
Email address: \texttt{thomas.eiter@wias-berlin.de}}
\and 
Yoshihiro Shibata\thanks{Department of Mathematics,  Waseda University, 
Ohkubo 3-4-1, Shinjuku-ku, Tokyo 169-8555, Japan. \endgraf
Email address: \texttt{yshibata325@gmail.com} \endgraf
Adjunct faculty member in the Department of 
Mechanical Engineering and Materials Science,
University of Pittsburgh, USA. \endgraf
Partially supported by Top Global University Project, 
JSPS Grant-in-aid for Scientific Research (A) 17H0109, and Toyota Central Research Institute Joint Research Fund.}}
\date{}
\maketitle

\begin{abstract}
We study an incompressible viscous flow around an obstacle 
with an oscillating boundary 
that moves by a translational periodic motion,
and we show existence of strong time-periodic solutions for small data in different configurations:
If the mean velocity of the body is zero,
existence of time-periodic solutions is provided within a framework of 
Sobolev functions with
isotropic pointwise decay. 
If the mean velocity is non-zero, 
this framework can be adapted,
but the spatial behavior of flow
requires a setting of anisotropically weighted spaces. 
In the latter case, we also establish existence of solutions within an
alternative framework
of homogeneous Sobolev spaces.
These results are based on the time-periodic maximal regularity 
of the associated linearizations,
which is derived 
from suitable $\sR$-bounds for the Stokes and Oseen resolvent problems.
The pointwise estimates are deduced from 
the associated time-periodic fundamental solutions.
\end{abstract}

\noindent
{\bf MSC2020}:  
35B10,
35B40,
35Q30,
35R37,
76D05,
76D07.
\\
{\bf Keywords}: 
time-periodic solutions, moving boundary, exterior domain,
maximal regularity, spatial decay.

\section{Introduction}

We consider a body with an oscillating boundary 
that moves through the three-dimensional space,
which is filled with an incompressible viscous fluid. 
The fluid motion is described by the Navier--Stokes equations
\begin{equation}\label{eq:system}
\pd_t\bu + \bu\cdot\nabla\bu - \mu\Delta\bu 
+\nabla\fp  = \bff, \quad \dv\bu = 0\quad\text{in 
$\Omega_t$}, \quad \
\bu|_{\Gamma_t}  = \bh,
\quad \
\lim_{|x|\to\infty}\bu(x,t)=0, 
\end{equation}
where $\Omega_t$ is the fluid domain with boundary $\Gamma_t$
at time $t\in\BR$.
The velocity field $\bu=(u_1,u_2,u_3)^\top$ 
and the pressure field $\fp$ are unknown,
while we prescribe the external force $\bff=(f_1,f_2,f_3)^\top$ 
and the fluid velocity at the boundary $\bh = (h_1, h_2, h_3)^\top$. 
The constant $\mu>0$ denotes the kinematic viscosity of the fluid.
Let $\CT>0$ be the time period of the boundary oscillation.
The body motion consists of a prescribed translation with velocity $\bv_\CB(t)\in\BR^3$
such that the fluid domain after one period is given by
\begin{equation}\label{eq:Omegaper}
\Omega_{t+\CT}=\Omega_t+\int_t^{t+\CT}\bv_\CB(\tau)\,\dd \tau
\end{equation}
for all times $t\in\BR$.
The translational velocity $\bv_\CB(t)$
is assumed to be time-periodic, that is,
$\bv_\CB(t+\CT)=\bv_\CB(t)$,
so that the displacement vector in \eqref{eq:Omegaper}
is independent of the time $t$. 
Moreover, by changing the frame of coordinates, 
we may assume that it is directed along the $x_1$-axis
such that the mean velocity over one period is given by
\[
\frac{1}{\CT}\int_t^{t+\CT}\bv_\CB(\tau)\,\dd \tau
=\frac{1}{\CT}\int_0^{\CT}\bv_\CB(\tau)\,\dd \tau
=\kappa e_1
\]
for some $\kappa\in\BR$,
where $e_1$ denotes unit vector in $x_1$-direction.
Then $\kappa$ corresponds to the mean translational speed
of the body.  
If the data $\bff$ and $\bh$ are time-periodic with period $\CT$,
then the whole system \eqref{eq:system} 
is time periodic in a frame moving with velocity $\kappa e_1$. 
In this article, we show existence of time-periodic solutions
to this problem
in suitable functional frameworks.
Observe that a natural choice of boundary conditions
in system~\eqref{eq:system}
would be classical no-slip conditions,
where $\bh$ is determined by the motion of the 
obstacle and its boundary.
In order to handle this choice, it is necessary to 
keep track of the dependence on the parameter $\kappa$
in the final existence result, 
see also Remark~\ref{rem:noslip} below.

A special case of the present situation is
when the body does not oscillate 
and its shape is time independent.
Then the problem reduces to the steady flow around a body
that translates with constant speed $\kappa$.
It is well known that the physical and mathematical properties
of this problem strongly depend on whether $\kappa=0$ or $\kappa\neq0$.
In particular, 
when $\kappa\neq0$, one observes a wake region behind the moving body,
which is reflected by an anisotropic decay of the velocity field.
As was shown recently, the same behavior can be observed 
for the time-periodic flow past a rigid body \cite{Eiter_Asympt}.
This observation suggests the necessity 
to also distinguish the cases $\kappa=0$ and $\kappa\neq 0$ 
in the presence of an oscillating boundary.

The mathematically rigorous study of
time-periodic Navier--Stokes flows
was initiated in the works of
Serrin~\cite{Serrin_PeriodicSolutionsNS1959},
Prodi~\cite{Prodi1960},
Yudovich~\cite{Yudovich1960}
and Prouse~\cite{Prouse1964}.
While these articles focus on bounded domains,
the first analytical result on time-periodic solutions to the 
Navier--Stokes equations
in an unbounded domain was achieved 
decades later by Maremonti~\cite{Maremonti_TimePer91},
who derived existence of time-periodic solutions 
in the three-dimensional whole space within an $\rL^2$ framework.
In the case of the presence of an exterior domain,
the first results on existence of time-periodic solutions 
are due to Salvi~\cite{Salvi1995} and Maremonti and Padula~\cite{MaremontiPadula1999}
for $\kappa=0$.
The case $\kappa\neq 0$ is 
due to Galdi and Silvestre~\cite{GaldiSilvestre2006},
who considered the situation of a general time-periodic rigid motion.
Existence of time-periodic mild solutions in so-called weak Lebesgue spaces 
is due to the fundamental work by Yamazaki~\cite{Yamazaki2000} for $\kappa=0$,
which was further developed to a general approach to time-periodic problems
by Geissert, Hieber and Nguyen~\cite{GeissertHieberNguyen16},
who also treated the case $\kappa\neq0$.
However, the classes of solutions studied in these articles
do not give suitable information on the decay of the flow
far from the body. 
This issue was addressed by Galdi and Sohr~\cite{GaldiSohr2004} 
for $\kappa=0$,
and very recently by Galdi~\cite{Galdi21} for $\kappa\neq 0$,
who established existence of regular solutions with pointwise spatial decay.
The asymptotic behavior is also reflected in
the framework of homogeneous Sobolev spaces 
introduced by  
Galdi and Kyed \cite{GaldiKyed_TPFlow2018},
who showed existence of 
time-periodic strong solutions in the case $\kappa\neq 0$
based on a framework 
of time-periodic maximal $\rL^p$ regularity
for the corresponding Oseen linearization.
As was shown recently by
Eiter, Kyed and Shibata~\cite{EiterKyedShibata_PeriodicLp}
in a more general framework,
a combination of this approach with 
suitable pointwise estimates leads to the existence of time-periodic solutions
for $\kappa=0$. 

Nearly all of the previous articles 
are concerned with Navier--Stokes flows in a domain with a fixed boundary.
Only in~\cite{Salvi1995}, 
the flow in an exterior domain with periodically moving boundary
was considered, 
and existence of time-periodic weak solutions
was shown.
While the corresponding problem in a bounded domain
has been addressed by several researchers in a framework of
weak solutions~\cite{Morimoto71,MiyakawaTeramoto82, Salvi94},
time-periodic mild solutions in a bounded domain were recently established
by Farwig, Kozono, Tsuda and Wegmann~\cite{FKTW21}
via a semigroup approach.
These solutions were later shown to be strong~\cite{FarwigTsuda_FujitaKato22}.
Independently, 
Eiter, Kyed and Shibata~\cite{EiterKyedShibata_PeriodicLp}
derived existence of strong solutions in a bounded domain
from the aforementioned framework of time-periodic maximal regularity
without relying on semigroup theory.
This approach was also used to establish time-periodic solutions 
in the case of one-phase and two-phase
flows~\cite{EiterKyedShibata_PeriodicOneTwoPhase21}.
In the present article, we follow this strategy
to establish first results 
on the existence of time-periodic strong solutions to the
Navier--Stokes equations in an exterior domain with an oscillating boundary.

We begin by transforming~\eqref{eq:system}
to a problem in a 
time-independent reference domain $\Omega$ with boundary $\Gamma$.
A suitable linearization leads to the system
\begin{equation}\label{eq:lin.intro}
\pd_t\bv - \mu\Delta\bv - \kappa\partial_1\bv + \nabla\fp 
= \bff,
\quad \dv\bv = 0\quad 
\text{in $\Omega\times\BT$}, \qquad 
\bv|_{\Gamma\times\BT}  = \bh.
\end{equation}
Here $\BT=\BR/\CT\BZ$ denotes the torus group 
associated with the given time period $\CT$,
and it indicates that all functions occurring in~\eqref{eq:lin.intro}
are time periodic.
To treat the full nonlinear problem by a fixed-point argument,
we first derive a result on maximal regularity 
for this time-periodic linear problem,
that is,
the existence of unique solutions to \eqref{eq:lin.intro}
that satisfy an \textit{a priori} estimate of the form
\[
\begin{aligned}
\|\pd_t\bv_\perp\|_{\rL_p(\BT, \rL_q(\Omega))}
+ \|\nabla^2\bv_\perp\|_{\rL_p(\BT, \rL_q(\Omega))}
&+|\kappa|\,\|\partial_1\bv_S\|_{\rL_p(\BT, \rL_q(\Omega))}
+ \|\nabla\fp\|_{\rL_p(\BT, \rL_q(\Omega))}
\\
&\qquad\qquad\quad
\leq C\big(
\|\bff\|_{\rL_p(\BT, \rL_q(\Omega))}
+\|\bh\|_{\rT_{p,q}(\Gamma\times\BT)}
\big)
\end{aligned}
\]
for suitable $p,q\in(1,\infty)$, 
where $\rT_{p,q}(\Gamma\times\BT)$ is a suitable trace space
introduced below.

To derive time-periodic maximal regularity, 
following the approach 
from~\cite{EiterKyedShibata_PeriodicOneTwoPhase21,EiterKyedShibata_PeriodicLp},
we first investigate the associated resolvent problems.
Observe that
the Fourier coefficients 
$(\hat\bv_k,\hat\fp_k)=(\sF_\BT[\bv](k),\sF_\BT[\fp](k))$, $k\in\BZ$,
of a time-periodic solution $(\bv,\fp)$ to~\eqref{eq:lin.intro}
satisfy
\begin{equation}\label{eq:res.intro}
i\frac{2\pi}{\CT}\hat\bv_k - \mu\Delta\hat\bv_k - \kappa\partial_1\hat\bv_k + \nabla\hat\fp_k 
= \hat\bff_k,
\qquad \dv\hat\bv_k = 0\quad 
\text{in $\Omega$}, \qquad 
\hat\bv_k|_{\Gamma}  = \hat\bh_k,
\end{equation}
and if $\CA(k)$ is a solution operator for \eqref{eq:res.intro}
such that $(\hat\bv_k,\hat\fp_k)=\CA(k)(\hat\bff_k,\hat\bh_k)$,
then a solution $(\bv,\fp)$ to~\eqref{eq:lin.intro}
is formally given by
\begin{equation}
\label{eq:Fmult}
(\bv,\fp)=\sF_\BT^{-1}\big[
k\mapsto\CA(k)\sF_\BT[(\bff,\bh)](k)
\big].
\end{equation}
This formula reduces the question of maximal regularity 
to the investigation of an operator-valued Fourier multiplier,
for which we apply an abstract multiplier theorem 
based one the notion of $\sR$-boundedness.
This application is complicated by the fact
that the required $\sR$-bounds for the family $\{\CA(k)\}$ 
are only available for large $k$.
Therefore, we proceed as in~\cite{EiterKyedShibata_PeriodicLp}
and decompose the time-periodic solution into a high-frequency part
and a low-frequency part.
While the first can then be treated by means of Fourier multipliers on $\BT$,
the latter consists of finitely many Fourier modes
that can be handled separately.

As demonstrated in~\cite{EiterKyedShibata_PeriodicLp},
the resulting framework of maximal regularity is suitable to treat 
the nonlinear problem~\eqref{eq:system} in the case of a bounded domain.
Compared to this, the present setting of an 
exterior domain comes along with two difficulties:
Firstly, for $k=0$ 
the resolvent problem~\eqref{eq:Fmult}
is not uniquely solvable in a framework of classical Sobolev spaces.
Therefore,
we decompose
the time-periodic linearized problem~\eqref{eq:lin.intro}
into the associated steady-state problem
and a purely oscillatory problem,
which are studied in separate functional frameworks.
Secondly, the treatment of the nonlinear problem
requires suitable estimates of the nonlinear terms.
Those cannot be derived within the resulting 
maximal-regularity function class, at least for $\kappa=0$.
Therefore, we complement the setting 
with pointwise estimates
that are derived from the time-periodic fundamental solutions to~\eqref{eq:lin.intro},
which were introduced by Eiter and Kyed~\cite{EK1}.
In the resulting framework, the contraction mapping principle can be 
used to derive existence of time-periodic solutions to the full nonlinear problem
for $\kappa=0$, see Theorem~\ref{mainthm:Stokes}.
In the case $\kappa\neq 0$, 
the same method can be employed, which results in Theorem~\ref{mainthm:Oseen},
but we have to take into account the anisotropic decay of the flow.
However, since the maximal-regularity framework for $\kappa\neq 0$
leads to better integrability properties of solutions,
we can also implement a fixed-point argument without
enriching the functional setting with pointwise decay properties,
see Theorem~\ref{mainthm:Oseen.int}.

We want to emphasize 
that our analysis also leads to new existence results in the case
of a moving body with a fixed boundary. 
In~\cite{GaldiKyed_TPFlow2018}, 
existence of strong time-periodic solutions was established for $\kappa\neq 0$
under the assumption that 
the translational velocity has the same direction at all times,
that is, $\bv_\CB(t)=v_\CB(t) e_1$ for some scalar time-periodic function $v_\CB$
with non-zero mean.
This assumption was necessary
to work with an Oseen linearization in a frame attached to the body,
that is, moving with the actual translational velocity $\bv_\CB$.
In contrast, 
we shall work in a frame moving with the \emph{mean} translational velocity $\kappa e_1$.
Regarding the remainder $\bv_\CB-\kappa e_1$ as an oscillation of the boundary,
we can omit the restrictions on $\bv_\CB$ from~\cite{GaldiKyed_TPFlow2018}.
For more details, see Section~\ref{sec:reformulation}.

The structure of the article is as follows:
After introducing the general notation in Section~\ref{sec:notation},
we reformulate the nonlinear system~\eqref{eq:system}
as a problem on a fixed domain in Section~\ref{sec:reformulation}.
In Section~\ref{sec:mainresults}, 
we then state our main results on the nonlinear and the linear problems.
The proofs for the linear theory for $\kappa\neq 0$
are provided in Section~\ref{sec:Oseen},
while they were provided in~\cite{EiterKyedShibata_PeriodicLp} for $\kappa=0$.
In Section~\ref{sec:nonlin}
we conclude by the proofs of the existence results
for the nonlinear problem~\eqref{eq:system}.

\section{Notation}
\label{sec:notation}

For topological vector spaces $X$ and $Y$,
we denote the space of continuous linear operators from
$X$ to $Y$
by $\sL(X, Y)$. 
We write $X'$ for the dual space of $X$,
and when $X$ is a normed space, then $\|\cdot\|_X$ denotes is norm.

By $\Omega$ we denote a three-dimensional exterior $C^2$-domain,
that is, a domain that is the complement of a compact set
with connected $C^2$-boundary $\Gamma=\partial\Omega$.
Let $b > 0$ be a sufficiently large radius such 
that $\Gamma\subset B_b := \{x \in \BR^3 \mid |x| < b\}$.

We write $\partial_j:=\partial_{x_j}$ for partial derivatives in space,
and $\nabla$, $\dv$ and $\Delta$  
denote gradient, divergence and Laplace operator,
which only act in spatial variables. 
For a sufficiently regular function $u$ and $k\in\BN$,
we denote the collection of all $k$-th order derivatives by $\nabla^k u$.

For classical Lebesgue and Sobolev spaces 
we write $\rL_q(\Omega)$ and $\rH^k_q(\Omega)$,
where $q\in[1,\infty]$ and $k\in\BN$,
and $\rL_{q,\mathrm{loc}}(\Omega)$ and $\rH^k_{q,\mathrm{loc}}(\Omega)$
denote their local variants.
Homogeneous Sobolev spaces are defined via
\[
\hat\rH^k_q(\Omega):=\big\{
u\in\rL_{1,\mathrm{loc}}(\Omega) \mid \nabla^k u\in \rL_q(\Omega)
\big\}
\]
When it is clear from the context,
we sometimes use the same notation for spaces of
vector-valued or matrix-valued functions.
For example, we write $\rL_q(\Omega)$ instead of 
$\rL_q(\Omega)^3$ or $\rL_q(\Omega)^{3\times3}$.

For a given time period $\CT>0$
we let $\BT:= \BR/\CT\BZ$ denote the corresponding torus group.
Functions on $\BT$ can then be identified with
$\CT$-periodic functions on $\BR$,
which we do tacitly in what follows.
We equip the topological group $\BT$ with the normalized Haar measure defined via
\[
\forall u \in C^0(\BT):\quad \int_\BT u(t)\,\dd t 
= \frac{1}{\CT}\int_0^{\CT} u(\tau)\,\dd \tau.
\]
Bochner--Lebesgue spaces
are denoted by $\rL_p(\BT;X)$ for $p\in[1,\infty]$, 
and we set
\[
\begin{aligned}
&\rH^{1}_{p}(\torus, X)
:= \{u \in\lr{p}(\torus, X) \mid \partial_t u
 \in\lr{p}(\torus, X)\},
 \quad
\|u\|_{\rH^1_p(\torus, X)} :=
 \|u\|_{\lr{p}(\torus, X)}+\|\partial_t u\|_{\lr{p}(\torus, X)}.
\end{aligned}
\]
The velocity field $\bu$ of a solution 
will be identified in the classical parabolic space, at least near the boundary,
that is,
$\bu\in\rH^1_p(\BT, \rL_q(\Omega_b)^3) \cap 
\rL_p(\BT, \rH^2_q(\Omega_b)^3)$
for some $p,q\in(1,\infty)$.
To shorten notation, 
we introduce the corresponding class of boundary traces on $\Gamma\times\BT$
via
\begin{equation}
\label{eq:tracespace}
\rT_{p,q}(\Gamma\times\BT)
:=\big\{ \bh=\bu|_{\Gamma\times\BT} \ \big | \ \bu\in\rH^1_p(\BT, \rL_q(\Omega)^3) \cap 
\rL_p(\BT, \rH^2_q(\Omega)^3) \big\}
\end{equation}
with corresponding norm
\[
\|\bh\|_{\rT_{p,q}(\Gamma\times\BT)}
:=\inf \big\{ 
\|\bu\|_{\lr{p}(\torus, \rH^2_q(\Omega))} 
+ \|\partial_t \bu\|_{\lr{p}(\torus, \rL_q(\Omega))} 
\ \big| \
\bh=\bu|_{\Gamma\times\BT}\big\}.
\]
Note that $\rT_{p,q}(\Gamma\times\BT)$ 
can be identified with a real interpolation space,
but we shall not make use of this property in what follows.

We often decompose
time-periodic functions $f\colon\Omega\times\BT\to\BR$
into a steady-state part $f_S$ 
and a purely oscillatory part $f_\perp$
defined by 
\begin{equation}\label{eq:decomposition}
f_S(x)= \int_\BT f(x, t)\,\dd t,
\qquad
f_\perp(x, t) = f(x, t) - f_S(x).
\end{equation}
To quantify decay rates of these two parts,
we use the norms
\[
<f_S>_\alpha = \sup_{x \in \Omega} |f_S(x)|
(1 + |x|)^{\alpha}, \qquad 
<f_S>^w_{\alpha,\beta}  = \sup_{x \in \Omega} |f_S(x)|(1+|x|)^\alpha(1+|x|-x_1)^\beta
\]
for the steady-state parts
and 
\[
<f_\perp>_{p, \alpha}  = \sup_{x \in \Omega} \|f_\perp(x, \cdot)\|_{\rL_p(\BT)}
(1+|x|)^{\alpha}
\]
for the purely oscillatory part,
where $\alpha,\beta\geq 0$ and $p\in(1,\infty)$.

We equip $\Omega\times\BT$ with the product measure
and denote the associated Lebesgue spaces by $\rL_p(\Omega\times\BT)$.
If $\Omega=\BR^3$, then $\BR^3\times\BT$ 
is a locally compact abelian group,
and we can define generalized Schwartz spaces 
and spaces of tempered distributions
on $\BT$ and $\BR^3\times\BT$ 
and the respective dual groups $\BZ$ and $\BZ\times\BR^3$.
see~\cite{Bruhat61,EiterKyed_tplinNS_PiFbook}.
Moreover, there is an associated notion of Fourier transform,
which can be defined via
\[
\sF_\BT[u](k) := \int_\BT u(t)\,\e^{-i\frac{2\pi}{\CT}kt}\,\dd t,
\qquad
\sF_\BT^{-1}[w](t) := \sum_{k \in \BZ} w(k)\,\e^{i\frac{2\pi}{\CT}kt},
\]
and we set
$\sF_{\BR^3\times\BT}=\sF_{\BR^3}\otimes\sF_{\BT}$ 
and 
$\sF_{\BR^3\times\BT}^{-1}=\sF_{\BR^3}^{-1}\otimes\sF_{\BT}^{-1}$,
where the Fourier transform in the Euclidean setting and its inverse 
are given by
\[
\sF_{\BR^3}[u](\xi):= \frac{1}{(2\pi)^3}\int_{\BR^3}u(x)\,\e^{-i\xi\cdot x}\,\dd x, 
\qquad
\sF_{\BR^3}^{-1}[w](x):= \int_{\BR^3}w(\xi)\,\e^{i\xi\cdot x}\,\dd \xi.
\]

To study operator-valued Fourier multipliers, we further need the notion of
UMD spaces,
which are Banach spaces $X$ such that 
the Hilbert transform $H$, defined by
\[
Hf(t) := \frac{1}{\pi}\lim_{\varepsilon\to0}\int_{|x|\geq\varepsilon} \frac{f(t-s)}{s}\,\dd s,
\]
is a bounded linear operator on $\rL_{p}(\BR,X)$ some $p\in(1,\infty)$. 
Moreover, we say that a family of operators $\CT \subset \sL(X, Y)$ is 
$\sR$-bounded in $\sL(X, Y)$
if there is $C > 0$
such that
\begin{equation}\label{est:Rbound}
\bigl\|\sum_{k=1}^n r_kT_kf_k\bigr\|_{\lr{1}((0,1), Y)} \leq 
C\bigl\|\sum_{k=1}^nr_kf_k\bigr\|_{\lr{1}((0, 1), X)}
\end{equation}
for all
$n \in \BN$, $\{T_j\}_{j=1}^n \in \CT^n$, 
and $\{f_j\}_{j=1}^n \in X^n$.
Here $r_k\colon [0, 1] \to \{-1, 1\}$, $t \mapsto {\rm sign}\,(\sin 2^k\pi t)$,
denote Rademacher functions.
Moreover, $\sR_{\sL(X, Y)}\CT$ denotes
the smallest constant $C$ such that \eqref{est:Rbound} holds.

For $\varepsilon\in(0,\pi/2)$ and $\delta>0$
we define the perturbed sector
\[
\Sigma_{\varepsilon,\delta}
:=\{\lambda\in\BC \mid |\lambda|>\delta, \, |\arg \lambda| <\pi-\varepsilon\}.
\]
Moreover, 
${\rm Hol}\,(\Sigma_{\varepsilon, \delta}, X)$
denotes the class of $X$-valued holomorphic functions on $\Sigma_{\varepsilon,\delta}$.

\section{Formulation on a reference domain}
\label{sec:reformulation}

To reformulate the 
system~\eqref{eq:system}
as a problem in a time-independent spatial domain, 
we describe the motion of the body and its boundary by suitable functions.

Let $\Omega$ be the exterior domain in $\BR^3$.
Let $\bv_\CB\in C^0(\BR)^3$
with $\bv_\CB(t+\CT)=\bv_\CB(t)$ for all $t\in\BR$,
and let
\begin{equation}
\label{eq:trafo.regularity}
\phi\in
C^0(\BR;C^3(\Omega)^3)\cap
C^1(\BR;C^1(\Omega)^3)
\end{equation}
such that $\phi(y, 0) = 0$ and $\phi(y, t+\CT) = \phi(y, t)$ for 
each $t \in \BR$ and $y \in \Omega$,
and such that $\phi(y, t) = 0$ for $y \not\in B_{2b}$.
Then the fluid domain $\Omega_t\subset\BR^3$ shall be given 
by 
\begin{equation}
\Omega_t = \Big\{x = y + \phi(y, t) + \int_0^t\bv_\CB(\tau)\,\dd \tau \,\Big|\, y \in \Omega\Big\} \quad(t \in \BR).
\label{eq:Omegat.direct}
\end{equation}
By rotating the coordinate frame, 
we may assume that the mean velocity over one time period
is directed along the $x_1$-axis
such that there is $\kappa\in\BR$ with
\[
\kappa e_1
=\frac{1}{\CT}\int_0^{\CT}\bv_\CB(\tau)\,\dd \tau.
\]
Then we can redefine $\phi$ in such a way that
\begin{equation}
\Omega_t = \Big\{x = y + \phi(y, t) + t\kappa e_1 \,\Big|\, y \in \Omega\Big\} \quad(t \in \BR)
\label{eq:Omegat.simple}
\end{equation}
instead of \eqref{eq:Omegat.direct}.
Indeed,
since
\[
\phi(y, t) + \int_0^t\bv_\CB(\tau)\,\dd \tau
= \Big(\phi(y, t) + \int_0^t(\bv_\CB(\tau)-\kappa e_1)\,\dd \tau \Big )
+ t\kappa e_1,
\]
we may assume that $\bv_\CB$ is constant in time 
and replace $\phi$ with the term in parenthesis,
which defines a time-periodic function.
Notice that to preserve the condition
$\phi(y, t) = 0$ for $|y|>2b$,
it might be necessary to multiply the term by a suitable cut-off function,
which would not change the set $\Omega_t$.

Given $\kappa$ and $\phi$, 
the domain $\Omega_t$ is the image of the transformation
$\Phi_t\colon\Omega\to\BR^3$,
$\Phi_t(y)=y+\phi(y,t)+t\kappa e_1$,
for $t\in\BR$,
and the boundary $\Gamma_t=\partial\Omega_t$ is given by
$\Gamma_t = \{x = y + \phi(y, t)+t\kappa e_1 \mid y \in \Gamma\}$.
To reduce system~\eqref{eq:system}
to a problem in the reference domain $\Omega=\Omega_0$,
we assume that
\begin{equation}\label{6.2} 
\sup_{t \in \BR} \|\phi(\cdot, t)\|_{\rH^3_\infty(\Omega)}
+ \sup_{t \in \BR} \|\pd_t\phi(\cdot, t)\|_{\rH^1_\infty(\Omega)} 
\leq \varepsilon_0
\end{equation}
with some small number $\varepsilon_0>0$,
and we use the change of variables induced by $\Phi_t$,
namely $x = y + \phi(y, t)+t\kappa e_1$.
By the smallness assumption~\eqref{6.2}, we may assume the existence of the
inverse transformation,
which has the form $y = x + \psi(x, t)-t\kappa e_1$.
The associated Jacobi matrix
$\pd(t, y)/\pd(t, x)$ is given by the formulas:
\[
\frac{\pd t}{\pd t} =1, \quad  \frac{\pd t}{\pd x_j} = 0, \quad
\frac{\pd y_\ell}{\pd t}=\frac{\pd \psi_\ell}{\pd t}-\kappa e_1, 
\quad \frac{\pd y_\ell}{\pd x_j} = \delta_{\ell j} + \frac{\pd \psi_\ell}{\pd x_j}
\]
for $j, \ell=1,2,3$.  Set 
\[
a_{\ell0}(y, t) =(\pd\psi_\ell/\pd t)(y+\phi(y, t)+t\kappa e_1, t),
\qquad
a_{\ell j}(y, t) = (\pd\psi_\ell/\pd x_j)(y + \phi(y, t)+t\kappa e_1, t).
\] 
Then partial derivatives transform as
\begin{equation}\label{5.3}
\frac{\pd f}{\pd t} = \frac{\pd g}{\pd t} + \sum_{\ell=1}^3 a_{\ell0}(y, t)\frac{\pd g}{\pd y_\ell}
-\kappa\frac{\partial g}{\partial y_1},
\qquad 
\frac{\pd f}{\pd x_j} = \frac{\pd g}{\pd y_j} + \sum_{\ell=1}^3 a_{\ell j}(y, t)\frac{\pd g}{\pd y_\ell}
\end{equation}
for $f(x,t)=g(y,t)$.
Let ${\rm J} = \det(\pd x/\pd y) = 1 + {\rm J}_0(y, t)$ be the Jacobian of $\Phi_t$.
From \eqref{6.2}
we obtain $C>0$ such that
\begin{equation}\label{5.4}\begin{aligned}
&\sup_{t \in \BR}\|a_{\ell j}(\cdot, t)\|_{\rH^2_\infty(\Omega)}
+ \sup_{t \in \BR}\|\pd_t a_{\ell j}(\cdot, t)\|_{\rL_\infty(\Omega)}
+ \sup_{t \in \BR}\|a_{0j}(\cdot, t)\|_{\rL_\infty(\Omega)} \\
&\qquad +
\sup_{t\in\BR}\|{\rm J}_0(\cdot, t)\|_{\rH^2_\infty(\Omega)} 
+ \sup_{t\in\BR}\|\pd_t{\rm J}_0(\cdot, t)\|_{\rL_\infty(\Omega)} \leq C\varepsilon_0
\end{aligned}\end{equation}
for $j, \ell=1,2,3$.  
For $\bv(y, t) 
=(v_1, v_2, v_3)^\top= \bu(x, t)$, and $\fq(y, t)
= \fp(x, t)$
we then have 
\[
\begin{aligned}
&\pd_t\bu = \pd_t\bv + \sum_{\ell=1}^3 a_{\ell0}\frac{\pd\bv}{\pd y_\ell}
-\kappa\frac{\partial\bv}{\partial y_1}, 
\quad 
\bu\cdot\nabla\bu = \bv\cdot({\rm I} + {\rm A})\nabla\bv,
\\ 
&\Delta\bu = \Delta\bv + \sum_{\ell=1}^3(a_{\ell j}+ a_{j\ell})\frac{\pd^2\bv}{\pd y_\ell \pd y_j}
+ \!\!\sum_{j,\ell, m=1}^3 a_{\ell j}a_{mj}\frac{\pd^2\bv}{\pd y_\ell\pd y_m} 
+ \sum_{\ell, m=1}^3\left(\frac{\pd a_{m\ell}}{\pd y_\ell} 
+ \sum_{j=1}^3 a_{\ell j}\frac{\pd a_{mj}}{\pd y_\ell}\right)\frac{\pd \bv}{\pd y_m},   
\\
&\dv\bu 
= {\rm J}^{-1}\Big(\dv \bv + \dv({\rm J}_0\bv) 
+ \sum_{j,\ell=1}^3\frac{\pd}{\pd y_\ell}(a_{\ell j}{\rm J}v_j)\Big), \quad
\nabla\fp = ({\rm I} + {\rm A})\nabla\fq,
\end{aligned}
\]
where ${\rm A}$ is a $(3\times 3)$-matrix whose $(j,k)$-th component is
$a_{jk}$.  Setting $w_\ell = v_\ell + {\rm J}_0v_\ell + \sum_{j=1}^3 a_{\ell j}{\rm J}v_j$, 
we have ${\rm J}\dv\bu = \dv\bw$ with $\bw = (w_1, w_2, w_3)^\top$.  Notice that 
$\bw = ({\rm I} + {\rm J}_0{\rm I} + {\rm A}^\top{\rm J})\bv$.
In view of \eqref{5.4}, choosing $\varepsilon_0 > 0$ sufficiently small, we see that 
there exists a $(3\times 3)$-matrix ${\rm B}_{-1}$ such that 
$({\rm I} + {\rm J}_0{\rm I} + {\rm A}^\top{\rm J})^{-1} = {\rm I}+ {\rm B}_{-1}$
and 
\begin{equation}\label{5.4*}
\sup_{t \in \BR} \|{\rm B}_{-1}(\cdot, t)\|_{\rH^2_\infty(\Omega)} \leq C\varepsilon_0,
\quad \sup_{t \in \BR} \|\pd_t{\rm B}_{-1}(\cdot, t)\|_{\rL_\infty(\Omega)}
\leq C\varepsilon_0.
\end{equation}
We further replace the time axis with the torus group 
$\BT:=\BR/\CT\BZ$ associated to the period $\CT$.
In total, system \eqref{eq:system} is transformed to
\begin{equation}\label{eq:systemref}
\pd_t\bw - \mu\Delta\bw - \kappa \pd_1\bw + 
\nabla\fq = \bff + \CL(\bw, \fq) + \CN(\bw),
\quad \dv \bw = 0\quad
\text{in $\Omega\times\BT$}, \qquad 
\bw|_{\Gamma\times\BT}  = \bh,
\end{equation}
where the data $\bff$ and $\bh$ are now prescribed 
with respect to the reference domain $\Omega$,
and 
\begin{equation}\label{6.6}\begin{aligned}
\CL(\bw, \fq)  &= -\pd_t({\rm B}_{-1}\bw)  
-\sum_{\ell=1}^3 a_{\ell0}\frac{\pd}{\pd y_\ell}(({\rm I}+{\rm B}_{-1})\bw)
+ \mu\Delta({\rm B}_{-1}\bw) \\
&\quad
+ \sum_{\ell=1}^3(a_{\ell j}+ a_{\ell j})
\frac{\pd^2}{\pd y_\ell \pd y_j}(({\rm I} + {\rm B}_{-1})\bw)
+ \sum_{j,\ell, m=1}^3 a_{\ell j}a_{mj}
\frac{\pd^2}{\pd y_\ell\pd y_m}(({\rm I} + {\rm B}_{-1})\bw) \\
&\quad + \sum_{\ell, m=1}^3\left(\frac{\pd a_{m\ell}}{\pd y_\ell} 
+ \sum_{j=1}^3 a_{\ell j}\frac{\pd a_{mj}}{\pd y_\ell}\right)
\frac{\pd }{\pd y_m}(({\rm I} + {\rm B}_{-1})\bw)\\
&\quad -\kappa\{\pd_1(\rB_{-1}\bw)+\sum_{\ell=1}^3a_{1\ell}
\frac{\pd}{\pd y_\ell}(({\rm I} + {\rm B}_{-1})\bw\}
- A\nabla\fq,
\\
\CN(\bw) & = (({\rm I} + {\rm B}_{-1})\bw)\cdot({\rm I} 
+ {\rm A})\nabla(({\rm I} + {\rm B}_{-1})\bw).
\end{aligned}\end{equation}
Notice that in~\eqref{eq:systemref}
we omitted that $\bw$ vanishes at infinity.
This condition will later be included in a suitable sense
in the definition of the function spaces.

\begin{remark}
\label{rem:noslip.formula}
In both formulations~\eqref{eq:system} and \eqref{eq:systemref},
we consider a general class of boundary data 
in the from of an inhomogeneous Dirichlet condition.
The most classical choice would be given by no-slip conditions
such that the fluid velocity coincides with the boundary velocity.
With the notation from above,
this means to assume that
\[
\bu(x,t)=\bv(y,t)=\partial_t\Phi_t(y)=\partial_t\phi(t,y)+\kappa e_1
\]
for $y\in\Gamma$ and $x=\Phi_t(y)=y+\phi(y,t)+t\kappa e_1\in\Gamma_t$.
Therefore, no-slip conditions 
correspond to the choice
\begin{equation}
\bh=({\rm I} + {\rm J}_0{\rm I} + {\rm A}^\top{\rm J})(\partial_t\phi+\kappa e_1)
\label{eq:noslipdata}
\end{equation}
in system~\eqref{eq:systemref}.
In particular, 
the prescribed boundary data $\bh$
depend on the translational velocity $\kappa$,
which also appears as a parameter in the linearization of~\eqref{eq:systemref}.
Therefore, for the treatment of no-slip conditions,
the dependence of smallness conditions on $\kappa$
has to be taken into account. 
In Remark~\ref{rem:noslip} 
we clarify in how far no-slip conditions can be handled 
in the frameworks proposed here.
\end{remark}

\section{Main results}
\label{sec:mainresults}

We first state the results on 
existence of time-periodic solutions
to~\eqref{eq:systemref}.
Their proofs will be based on the study 
of a suitable linearization,
which is given by
\begin{equation}\label{eq:nslin.tp}
\pd_t\bv - \mu\Delta\bv - \kappa\partial_1\bv + \nabla\fp 
= \bff,
\quad \dv\bv = 0\quad 
\text{in $\Omega\times\BT$}, \qquad 
\bv|_{\Gamma\times\BT}  = \bh.
\end{equation}
We obtain a time-periodic Stokes system for $\kappa=0$,
and a time-periodic Oseen problem for $\kappa\neq 0$,
which have different mathematical properties.  
The results on unique existence of solutions to
the linear problem~\eqref{eq:nslin.tp}
are collected in Subsection~\ref{subsec:linresults}.

\subsection{Solutions to the nonlinear problem}

In the theorems on existence of solutions to problem~\eqref{eq:systemref},
we always assume
\begin{equation}
\label{eq:assumptions}
2<p<\infty, \quad
3<q<\infty, \quad
h\in \rT_{p,q}(\Gamma\times\BT),
\quad
\phi\in
C^0(\BR;C^3(\Omega)^3)\cap
C^1(\BR;C^1(\Omega)^3),
\end{equation}
where $\rT_{p,q}(\Gamma\times\BT)$ 
is the space from \eqref{eq:tracespace}.
We begin with the case without translation, that is, where
$\kappa=0$ in \eqref{eq:systemref}. 
To quantify the pointwise decay of functions, 
we use the weighted norms introduced in Section~\ref{sec:notation}.

\begin{thm}\label{mainthm:Stokes} 
Assume~\eqref{eq:assumptions}
and let $\bff = \bff_S + \bff_\perp$
with $\bff_S = \dv\bF_S$ and $\bff_\perp = \dv \bF_\perp$.
There exist constants $\varepsilon,\varepsilon_0>0$ such that if 
the smallness conditions \eqref{6.2} and
\begin{equation}\label{6:small.2}\begin{aligned}
<\bff_S>_3 + <\bF_S>_2 + <\bff_\perp>_{p, 2} + <\bF_\perp>_{p, 1} 
+\|\bh\|_{\rT_{p,q}(\Gamma\times\BT)} &< \varepsilon^2
\end{aligned}\end{equation}
are satisfied,
then problem \eqref{eq:systemref} with $\kappa=0$  admits
a unique solution $(\bw,\fp)$ with
$$
\bw \in \rH^1_p(\BT, \rL_q(\Omega)^3) \cap \rL_p(\BT, \rH^2_q(\Omega)^3),  \quad
\fp \in \rL_p(\BT, \hat\rH^1_q(\Omega))
$$
satisfying the estimate
\begin{align*}
<\bw>_{p,1} + <\nabla\bw>_{p,2}  
+ \|\bw\|_{\rL_p(\BT, \rH^2_q(\Omega))}
+ \|\pd_t\bw\|_{\rL_p(\BT, \rL_q(\Omega))}
+ \|\nabla\fp\|_{\rL_p(\BT, \rL_q(\Omega))}
\leq \varepsilon.
\end{align*}
\end{thm}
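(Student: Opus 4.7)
The plan is to execute a contraction-mapping argument in a Banach space that combines the time-periodic maximal-regularity norm with the pointwise weighted norms appearing in the statement. To set this up, I would first decompose $\bw=\bw_S+\bw_\perp$, $\fp=\fp_S+\fp_\perp$, $\bff=\bff_S+\bff_\perp$, and $\bh=\bh_S+\bh_\perp$ according to~\eqref{eq:decomposition}, so that the linearization~\eqref{eq:nslin.tp} with $\kappa=0$ splits into a stationary Stokes problem for $(\bw_S,\fp_S)$ in $\Omega$ and a purely oscillatory Stokes problem for $(\bw_\perp,\fp_\perp)$ in $\Omega\times\BT$. These two systems are then solved in different functional frameworks, as foreseen in the introduction, and the two pieces are reassembled into the full solution $(\bw,\fp)$.

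For the linear theory, the purely oscillatory part is handled by the time-periodic maximal-regularity result from~\cite{EiterKyedShibata_PeriodicLp}, which produces the $\rH^1_p(\BT,\rL_q(\Omega))\cap\rL_p(\BT,\rH^2_q(\Omega))$ bounds on $\bw_\perp$ and the $\rL_p(\BT,\rL_q(\Omega))$ bound on $\nabla\fp_\perp$. The extra pointwise decay $<\bw_\perp>_{p,1}+<\nabla\bw_\perp>_{p,2}$ I would obtain by representing $\bw_\perp$ via the time-periodic Stokes fundamental solution from~\cite{EK1}, applied to data in divergence form $\bff_\perp=\dv\bF_\perp$, together with boundary-layer correctors absorbing $\bh_\perp$; this relies on the kernel decay estimates established in~\cite{EK1}. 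The stationary part is treated by classical exterior-domain Stokes theory, and the pointwise bounds $|\bw_S|\lesssim(1+|x|)^{-1}$, $|\nabla\bw_S|\lesssim(1+|x|)^{-2}$ follow from the $1/|x|$-behavior of the three-dimensional Stokes fundamental solution applied to $\bff_S=\dv\bF_S$ with $<\bF_S>_2<\infty$.

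With these estimates available, I would define the map $\Psi$ that sends $(\bw,\fp)$ in the closed ball of radius $\varepsilon$ in the Banach space
\[
\CX:=\Big\{(\bw,\fp)\,\Big|\,<\bw>_{p,1}+<\nabla\bw>_{p,2}+\|\bw\|_{\rL_p(\BT,\rH^2_q(\Omega))}+\|\pd_t\bw\|_{\rL_p(\BT,\rL_q(\Omega))}+\|\nabla\fp\|_{\rL_p(\BT,\rL_q(\Omega))}<\infty\Big\}
\]
to the solution of the linear problem~\eqref{eq:nslin.tp} with right-hand side $\bff+\CL(\bw,\fp)+\CN(\bw)$ and boundary data $\bh$. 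The terms in $\CL(\bw,\fp)$ carry the coefficients $a_{\ell j}$, ${\rm J}_0$, ${\rm B}_{-1}$, and ${\rm A}$, which by~\eqref{5.4} and~\eqref{5.4*} are bounded by $C\varepsilon_0$ in $\rH^2_\infty(\Omega)$ uniformly in $t$. Hence they contribute at most $C\varepsilon_0\,\|(\bw,\fp)\|_\CX$ in every component of the $\CX$-norm and can be absorbed after choosing $\varepsilon_0$ sufficiently small.

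The main obstacle will be the quadratic nonlinearity $\CN(\bw)$, which behaves like $\bw\cdot\nabla\bw$ and is not controlled by the maximal-regularity norm alone in an exterior domain when $\kappa=0$; this is precisely what forces the pointwise weighted norms into $\CX$. Using $q>3$ and the Sobolev embedding $\rH^2_q(\Omega)\hookrightarrow\rH^1_\infty(\Omega)$, together with the pointwise decays $|\bw|\lesssim(1+|x|)^{-1}$ and $|\nabla\bw|\lesssim(1+|x|)^{-2}$ encoded in the weighted norms, I would rewrite $\CN(\bw)=\dv\bG(\bw)$ with $\bG(\bw)$ of order $\bw\otimes\bw$, and then verify $<\bG(\bw)_S>_2\lesssim\|(\bw,\fp)\|_\CX^2$ and $<\bG(\bw)_\perp>_{p,1}\lesssim\|(\bw,\fp)\|_\CX^2$, which feeds into the linear estimates in the right form. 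Combined with the corresponding $\rL_p\rL_q$-bound on $\CN(\bw)$ itself, this yields $\|\Psi(\bw,\fp)\|_\CX\leq C(\varepsilon^2+\varepsilon_0\varepsilon+\|(\bw,\fp)\|_\CX^2)$ and a matching Lipschitz estimate, so that Banach's fixed-point theorem in the closed ball of radius $\varepsilon$ delivers the unique solution for $\varepsilon$ and $\varepsilon_0$ small enough.
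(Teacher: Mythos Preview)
Your proposal is correct and follows essentially the same approach as the paper: a contraction-mapping argument on the ball $\CI_{0,\varepsilon}$ in the space combining the parabolic norm with the weighted norms $<\cdot>_{p,1}$, $<\nabla\cdot>_{p,2}$, with the linear solvability supplied by Theorem~\ref{thm:tpStokes}, the compactly supported terms $\CL(\bv,\fq)$ and $\CN^2(\bv)$ estimated via~\eqref{5.4}--\eqref{5.4*} (Lemma~\ref{lem:rhs.loc}), and the global part $\CN^1(\bv)=\dv(\bv\otimes\bv)$ controlled by the weighted norms (Lemma~\ref{lem:nonlinest.Stokes}). One small point: the linear estimate~\eqref{est:tpStokes} requires weighted control of \emph{both} the potential $\tilde\CN^1(\bv)$ and its divergence $\CN^1(\bv)$, so you should also record $<\CN^1(\bv)_S>_3$ and $<\CN^1(\bv)_\perp>_{p,2}$, not merely the $\rL_p\rL_q$-bound; these follow from the same ingredients (H\"older in time using $p>2$, and the interpolation inequality~\eqref{est:interpolation} using $2/p+3/q<2$ to bound $\|\bv_\perp\|_{\rL_\infty(\Omega\times\BT)}$).
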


In the case with translation, that is, where $\kappa \neq 0$ in 
\eqref{eq:systemref},
we obtain existence of a time-periodic solution
with anisotropic pointwise decay. 

\begin{thm}\label{mainthm:Oseen}
Let $\kappa_0>0$ and $\delta \in (0, 1/4)$,
and assume~\eqref{eq:assumptions}.  
Let $\bff= \bff_S + \bff_\perp$ with $\bff_\perp=\dv \bF_\perp$.
Then there exist
$\varepsilon,\varepsilon_0>0$ such that if
the smallness conditions \eqref{6.2} and
\begin{equation}\label{6:small.1}\begin{aligned}
&<\bff_S>^w_{5/2,1/2+2\delta}
+ <\bff_\perp>_{p, 2+\delta} +<\bF_\perp>_{p,1+\delta}
+\|\bh\|_{\rT_{p,q}(\Gamma\times\BT)}
 < \varepsilon^2|\kappa|^{2\delta}
\end{aligned}\end{equation}
are satisfied,
then problem \eqref{eq:systemref} with $\kappa \not= 0$ admits a unique solution $(\bw,\fp)$ 
with
$$
\bw \in \rH^1_p(\BT, \rL_q(\Omega)^3) \cap \rL_p(\BT, \rH^2_q(\Omega)^3), \quad
\fp \in \rL_p(\BT, \hat\rH^1_q(\Omega))
$$
satisfying the estimate
\begin{align*}
<\bw_S>^w_{1,\delta} 
&+ <\nabla \bw_S>^w_{3/2, 1/2+\delta}
+<\bw_\perp>_{p, 1+\delta} + <\nabla\bw_\perp>_{p, 2+\delta} 
\\
&\quad
+ \|\bw\|_{\rL_p(\BT,\rH^2_q(\Omega))}
+ \|\pd_t\bw\|_{\rL_p(\BT, \rL_q(\Omega))} 
+ \|\nabla\fp\|_{\rL_p(\BT, \rL_q(\Omega))}
\leq \varepsilon|\kappa|^{2\delta}.
\end{align*}
\end{thm}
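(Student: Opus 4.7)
The plan is to solve the nonlinear problem~\eqref{eq:systemref} with $\kappa\neq 0$ via a contraction mapping argument in a function space whose norm $N(\bw,\fp)$ is precisely the left-hand side of the estimate claimed in Theorem~\ref{mainthm:Oseen}. I would introduce the closed ball
\[
B_\varepsilon := \{(\bw,\fp) \,:\, N(\bw,\fp) \leq \varepsilon|\kappa|^{2\delta},\ \dv\bw = 0,\ \bw|_{\Gamma\times\BT}=\bh\}
\]
and define a map $\Psi(\tilde \bw, \tilde \fp) := (\bw, \fp)$, where $(\bw,\fp)$ solves the linearized Oseen problem~\eqref{eq:nslin.tp} with right-hand side $\bff + \CL(\tilde\bw,\tilde\fp) + \CN(\tilde\bw)$ and boundary datum $\bh$. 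The theorem will follow once $\Psi$ is shown to map $B_\varepsilon$ into itself and be a strict contraction.

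For the linear step I would decompose~\eqref{eq:nslin.tp} via~\eqref{eq:decomposition} into its steady-state ($k=0$) mode and its purely oscillatory remainder. The oscillatory part is controlled by the time-periodic maximal-regularity result announced in Subsection~\ref{subsec:linresults}, which yields all $\rL_p(\BT,\rL_q)$ norms contained in $N$, while the pointwise isotropic bounds $<\bw_\perp>_{p,1+\delta}+<\nabla\bw_\perp>_{p,2+\delta}$ are recovered by convolution against the time-periodic Oseen fundamental solution of Eiter and Kyed~\cite{EK1}, in which the divergence form $\bff_\perp = \dv\bF_\perp$ is exploited to gain one spatial derivative of decay. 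The steady part satisfies the stationary Oseen system in $\Omega$, for which the classical anisotropic weighted theory provides $<\bw_S>^w_{1,\delta}+<\nabla\bw_S>^w_{3/2,1/2+\delta}$ in terms of $<\bff_S>^w_{5/2,1/2+2\delta}$ and the steady part of $\bh$. Careful tracking of the $\kappa$-scaling of both estimates produces the factor $|\kappa|^{2\delta}$ that balances both sides of the smallness condition~\eqref{6:small.1}.

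The main obstacle is estimating $\bff + \CL(\tilde \bw, \tilde \fp) + \CN(\tilde \bw)$ in the data norm dictated by the above linear theory. The term $\CL$ is linear in $(\tilde\bw,\tilde\fp)$ with coefficients that are $O(\varepsilon_0)$ by~\eqref{5.4} and~\eqref{5.4*} and compactly supported in $B_{2b}$, so its contribution is controlled by $C\varepsilon_0\, N(\tilde\bw,\tilde\fp)$ in every relevant norm, and the compact spatial support trivialises all anisotropic weights. The quadratic term $\CN(\tilde\bw)$ reduces, modulo $\varepsilon_0$-small corrections, to $\tilde\bw \cdot \nabla \tilde\bw$; after splitting into steady and oscillatory parts via~\eqref{eq:decomposition}, one must verify multiplicative weight inequalities of the form
\[
(1+|x|)^{-a_1-a_2}(1+|x|-x_1)^{-b_1-b_2}\leq C(1+|x|)^{-5/2}(1+|x|-x_1)^{-1/2-2\delta},
\]
with exponents $(a_i,b_i)$ read off from the weights in $N$, in order to bound $<\CN(\tilde\bw)_S>^w_{5/2,1/2+2\delta}$. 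For the oscillatory part, the pointwise decay of $\tilde\bw$ implies $\rL_p(\BT;\rL_\infty)$ control, and a H\"older argument in time then yields the bound on $<\CN(\tilde\bw)_\perp>_{p,2+\delta}$ in divergence form. The contraction property follows by the same estimates applied to the bilinear identity $\CN(\tilde\bw_1)-\CN(\tilde\bw_2)=(\tilde\bw_1-\tilde\bw_2)\cdot\nabla\tilde\bw_1+\tilde\bw_2\cdot\nabla(\tilde\bw_1-\tilde\bw_2)$, provided $\varepsilon$ and $\varepsilon_0$ are chosen small enough (with dependence on $\kappa_0$ but not on $|\kappa|\geq\kappa_0$ through the weighted norms alone).
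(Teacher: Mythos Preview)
Your overall strategy matches the paper's: a contraction-mapping argument in a weighted ball, with the linear solvability step supplied by Theorem~\ref{thm:tpOseen} and the nonlinear terms controlled by the multiplicative weight inequalities you sketch (these are exactly Lemmas~\ref{lem:rhs.loc} and~\ref{lem:nonlinest.Oseen}).

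Two points need correction. First, you have the role of $\kappa_0$ reversed: in this paper $\kappa_0$ is an \emph{upper} bound, $0<|\kappa|\leq\kappa_0$, and the explicit $|\kappa|^{2\delta}$ in~\eqref{6:small.1} is there precisely so that the constants do not blow up as $|\kappa|\to0$. The linear estimate~\eqref{est:tpOseen} already carries factors $|\kappa|^\delta$ in front of $<\bv_S>^w_{1,\delta}$ and $<\nabla\bv_S>^w_{3/2,1/2+\delta}$; the paper therefore builds these factors into the fixed-point norm $\|\cdot\|_{\CI_\kappa}$ (rather than using your bare $N$) and closes with $\rho=\varepsilon|\kappa|^{2\delta}$. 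Your choice $N$ omits these weights, which makes the $\kappa$-bookkeeping awkward.

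Second, and more substantive: your treatment of the purely oscillatory quadratic term does not close as written. You say the pointwise decay of $\tilde\bw$ gives $\rL_p(\BT;\rL_\infty)$ control and then invoke H\"older in time. But for the term $\bv_\perp\cdot\nabla\bv_\perp$ (or $\bv_\perp\otimes\bv_\perp$) both factors lie only in $\rL_p(\BT)$ at each $x$, so H\"older returns $\rL_{p/2}(\BT)$, not the $\rL_p(\BT)$ needed for $<\cdot>_{p,2+\delta}$. The missing ingredient is the parabolic embedding~\eqref{est:interpolation},
\[
\|\bv_\perp\|_{\rL_\infty(\BT;\rL_\infty(\Omega))}
\leq C\big(\|\pd_t\bv_\perp\|_{\rL_p(\BT,\rL_q(\Omega))}+\|\bv_\perp\|_{\rL_p(\BT,\rH^2_q(\Omega))}\big),
\]
valid because $2/p+3/q<2$ under~\eqref{eq:assumptions}; this lets one factor sit in $\rL_\infty(\BT)$ and is exactly how Lemma~\ref{lem:nonlinest.Oseen} closes the estimate. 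Without it your H\"older step fails.
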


Alternatively, 
the case $\kappa\neq0$ allows to avoid 
spaces of functions with suitable pointwise decay,
such that the spatial asymptotics are merely quantified in terms of integrability.
However, the steady-state part of the velocity field
only belongs to suitable homogeneous Sobolev spaces.

\begin{thm}\label{mainthm:Oseen.int} 
Let $\kappa_0>0$
and $\delta\in(0,1)$, 
and assume~\eqref{eq:assumptions}.  
Let $\bff= \bff_S + \bff_\perp$ and $1<s<4/3$.
Then there exist
$\varepsilon,\varepsilon_0>0$ such that 
if the smallness conditions \eqref{6.2} and
\begin{equation}\label{6:small.Os.int}\begin{aligned}
&\|\bff\|_{\rL_p(\BT, \rL_s(\Omega))}
+\|\bff\|_{\rL_p(\BT, \rL_q(\Omega))}
+\|\bh\|_{\rT_{p,q}(\Gamma\times\BT)}
\leq \varepsilon^2 |\kappa|^{1/(1+\delta)}
\end{aligned}\end{equation}
are satisfied,
then problem \eqref{eq:systemref} with $\kappa \not= 0$ admits a unique solution $(\bw,\fp)$ 
with $\bw=\bw_S+\bw_\perp$ and
$$
\bw_S\in\hat\rH^2(\Omega)^3,\quad
\bw_\perp \in \rH^1_p(\BT, \rL_q(\Omega)^3) \cap \rL_p(\BT, \rH^2_q(\Omega)^3), \quad
\fp \in \rL_p(\BT, \hat\rH^1_q(\Omega))
$$
satisfying the estimate
\[
\begin{aligned}
&\|\nabla^2\bw_S\|_{\rL_s(\Omega)}
+ |\kappa|^{1/4}\|\nabla\bw_S\|_{\rL_{4s/(4-s)}(\Omega)}
+ |\kappa|^{1/2}\|\bw_S\|_{\rL_{2s/(2-s)}(\Omega)}
+ |\kappa|\,\|\partial_1\bw_S\|_{\rL_s(\Omega)}
\\
&\qquad
+\|\nabla^2\bw_S\|_{\rL_q(\Omega)}
+ \|\pd_t\bw_\perp\|_{\rL_p(\BT, \rL_s(\Omega))}
+ \|\bw_\perp\|_{\rL_p(\BT, \rH^2_s(\Omega))}
+ \|\nabla\fp\|_{\rL_p(\BT, \rL_s(\Omega))}
\\
&\qquad
+ \|\pd_t\bw_\perp\|_{\rL_p(\BT, \rL_q(\Omega))}
+ \|\bw_\perp\|_{\rL_p(\BT, \rH^2_q(\Omega))}
+ \|\nabla\fp\|_{\rL_p(\BT, \rL_q(\Omega))}
\leq \varepsilon \,|\kappa|^{1/2}.
\end{aligned}
\]
\end{thm}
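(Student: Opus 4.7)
The plan is to combine the splitting into steady and purely oscillatory parts with a Banach fixed-point argument based on the maximal regularity of~\eqref{eq:nslin.tp}. Writing $\bw=\bw_S+\bw_\perp$, $\fp=\fp_S+\fp_\perp$, $\bff=\bff_S+\bff_\perp$, $\bh=\bh_S+\bh_\perp$ as in~\eqref{eq:decomposition}, the steady part $(\bw_S,\fp_S)$ solves the classical stationary Oseen problem in the exterior domain $\Omega$, for which I would invoke the standard homogeneous $\hat\rH^2_r$-theory (with $r=s$ and $r=q$) to obtain the anisotropic a priori bound encoding the four quantities $\|\nabla^2\bw_S\|_{\rL_s}$, $|\kappa|^{1/4}\|\nabla\bw_S\|_{\rL_{4s/(4-s)}}$, $|\kappa|^{1/2}\|\bw_S\|_{\rL_{2s/(2-s)}}$, $|\kappa|\,\|\partial_1\bw_S\|_{\rL_s}$, together with $\|\nabla^2\bw_S\|_{\rL_q}$, all controlled by $\bff_S$ and the steady trace of $\bh$. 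The purely oscillatory part is handled by the time-periodic maximal regularity announced in Subsection~\ref{subsec:linresults}: because $(\bw_\perp,\fp_\perp)$ has zero time mean, the Fourier mode $k=0$ in~\eqref{eq:res.intro} is avoided, and the $\sR$-bound based multiplier argument from~\eqref{eq:Fmult} applies uniformly in $\kappa$ for $|\kappa|\geq\kappa_0$, giving the full parabolic estimate in $\rL_p(\BT;\rL_r)$ for $r\in\{s,q\}$.

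Next I would let $X$ be the Banach space of time-periodic pairs with norm equal to the left-hand side of the estimate in the theorem, and set $B_R=\{(\bw,\fp)\in X:\|(\bw,\fp)\|_X\leq R\}$ with $R=\varepsilon|\kappa|^{1/2}$. The combined linear theory defines a bounded solution operator $\CS$ from the data into $X$, and I would define $\Phi\colon B_R\to X$ by $\Phi(\bw,\fp)=\CS(\bff+\CL(\bw,\fp)+\CN(\bw),\bh)$ with $\CL,\CN$ as in~\eqref{6.6}. The goal is to show $\Phi(B_R)\subset B_R$ and that $\Phi$ is a contraction, yielding a unique fixed point that solves~\eqref{eq:systemref}.

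The core of the argument is to estimate $\CN(\bw)$ and $\CL(\bw,\fp)$ in $\rL_p(\BT;\rL_s(\Omega))\cap\rL_p(\BT;\rL_q(\Omega))$, as required on the data side of the linear theory. I would split the convective nonlinearity $\bw\cdot\nabla\bw$ into its four steady/oscillatory combinations. The purely steady term $\bw_S\cdot\nabla\bw_S$ is controlled by Hölder against the Lebesgue exponents of Step~1, using interpolation between $\nabla\bw_S\in\rL_{4s/(4-s)}$ and $\nabla\bw_S\in\rL_\infty$ (the latter from $\hat\rH^2_q\hookrightarrow\rW^1_\infty$, valid because $q>3$); the Oseen $\kappa$-scalings then combine with $R^2=\varepsilon^2|\kappa|$ so that the bound fits under the data size $\varepsilon^2|\kappa|^{1/(1+\delta)}$ for $\delta\in(0,1)$. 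The mixed and purely oscillatory convective terms use $\bw_\perp\in\rL_p(\BT;\rL_\infty)$ via the same embedding. The perturbative term $\CL(\bw,\fp)$ is linear in $(\bw,\fp)$ with coefficients $\rB_{-1},A,a_{\ell j}$ and their derivatives bounded by $C\varepsilon_0$ through~\eqref{5.4} and~\eqref{5.4*}; choosing $\varepsilon_0$ small relative to $\varepsilon$ lets me absorb $\|\CL(\bw,\fp)\|_{\rL_p(\BT;\rL_s\cap\rL_q)}$ back into the left-hand side. Applying exactly the same Hölder–interpolation scheme to $\CN(\bw)-\CN(\tilde\bw)$ and $\CL(\bw,\fp)-\CL(\tilde\bw,\tilde\fp)$ gives Lipschitz continuity, and hence contractivity of $\Phi$ on $B_R$.

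The main obstacle will be the correct $\kappa$-bookkeeping for the steady–steady convective term. The Hölder identity $\frac{1}{s}=\frac{2-s}{2s}+\frac{4-s}{4s}$ only holds at $s=4/3$, so for $s\in(1,4/3)$ some interpolation of $\nabla\bw_S$ between $\rL_{4s/(4-s)}$ and the $\rL_\infty$-bound arising from $\|\nabla^2\bw_S\|_{\rL_q}$ is forced, and the effective $\kappa$-exponent in the bound on $\|\bw_S\cdot\nabla\bw_S\|_{\rL_s}$ depends on the interpolation parameter. It is precisely the role of the free parameter $\delta$ in the data condition~\eqref{6:small.Os.int} and of the factor $|\kappa|^{1/2}$ in the radius $R$ to guarantee that the nonlinear bound closes with a subunit constant. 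Matching this simultaneously with the $\rL_p(\BT;\rL_q)$-estimate of the same nonlinearity—needed for the $\rL_q$-component of the $X$-norm—and controlling the pressure term $A\nabla\fq$ inside $\CL$ at both integrability scales is the principal technical hurdle of the proof.
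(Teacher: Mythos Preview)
Your approach is essentially the same as the paper's: a contraction mapping on a ball of radius $\varepsilon|\kappa|^{1/2}$ in the norm suggested by Theorem~\ref{thm:tpOseen.int}, with the nonlinear estimates closed via H\"older and Gagliardo--Nirenberg interpolation (Lemma~\ref{lem:nonlinest.Oseen.int}) and the perturbation $\CL$ absorbed through~\eqref{5.4}--\eqref{5.4*}. Two minor slips to correct: the uniformity of the linear theory is for $|\kappa|\leq\kappa_0$, not $|\kappa|\geq\kappa_0$; and the embedding $\hat\rH^2_q\hookrightarrow\rW^1_\infty$ fails in exterior domains without lower-order control, so you must interpolate $\|\nabla\bw_S\|_{\rL_\infty}$ between $\|\nabla^2\bw_S\|_{\rL_q}$ and the available $\|\bw_S\|_{\rL_{2s/(2-s)}}$ (or $\|\nabla\bw_S\|_{\rL_{4s/(4-s)}}$) via the exterior-domain Gagliardo--Nirenberg inequality, exactly as the paper does.
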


Theorem~\ref{mainthm:Stokes}, Theorem~\ref{mainthm:Oseen}
and Theorem~\ref{mainthm:Oseen.int} will be proved in Section~\ref{sec:nonlin}.

\begin{remark}
\label{rem:noslip}
While Theorem~\ref{mainthm:Stokes}
deals with the case $\kappa=0$ of vanishing translational velocity,
Theorem~\ref{mainthm:Oseen} and Theorem~\ref{mainthm:Oseen.int}
yield existence of time-periodic solutions for arbitrary large $\kappa\neq 0$
if the data are sufficiently small.
However,
the treatment of no-slip boundary conditions
requires to take $|\kappa|$ small.
Indeed, as explained in Remark~\ref{rem:noslip.formula},
no-slip conditions
are expressed by boundary data $\bh$ of the form~\eqref{eq:noslipdata}.
In virtue of \eqref{6.2}, \eqref{5.4} and \eqref{5.4*},
we can then estimate
\[
\|\bh\|_{\rT_{p,q}(\Gamma\times\BT)}
\leq C(1+\varepsilon_0) (\varepsilon_0+|\kappa|).
\]
Therefore, the smallness conditions~\eqref{6:small.1}
and~\eqref{6:small.Os.int}
can be satisfied by fixing
$\kappa_0>0$ and choosing $|\kappa|$ and $\varepsilon_0>0$ sufficiently small.
\end{remark}

\subsection{The associated linear problems}
\label{subsec:linresults}

In the case $\kappa=0$, system~\eqref{eq:nslin.tp}
reduces to the time-periodic Stokes equations in an 
exterior domain. 
The following existence theorem
was shown in~\cite{EiterKyedShibata_PeriodicLp}.
We set $\rL_{q,3b}(\Omega)=\{f\in\rL_{q}(\Omega)\mid\mathrm{supp}\, f\subset B_{3b}\}$
to shorten the notation.

\begin{thm}\label{thm:tpStokes}
Let $\kappa=0$.
Let $1 < p < \infty$, $3<q<\infty$ and $\ell\in(0,3]$.
For all $\bff=\bff_S+\bff_\perp$
such that $\bff_S=\dv \bF_S+\bg_S$ and $\bff_\perp=\dv\bF_\perp+\bg_\perp$
with $\bg=\bg_S+\bg_\perp\in\rL_p(\BT, \rL_{q,3b}(\Omega)^3)$ and
\[
<\bF_S>_2 + <\dv\bF_S>_3 + <\bF_\perp>_{p, \ell} + <\dv\bF_\perp>_{p, \ell+1} < \infty,
\]
and for all $\bh\in\rT_{p,q}(\Gamma\times\BT)$,
problem \eqref{eq:nslin.tp} with $\kappa=0$
admits a unique solution $(\bv,\fp)$ with
$$\bv \in \rH^1_p(\BT, \rL_q(\Omega)^3) \cap 
\rL_p(\BT, \rH^2_q(\Omega)^3), \quad
\fp \in \rL_p(\BT, \hat\rH^1_q(\Omega)),
$$
possessing the estimate
\begin{equation}
\begin{aligned}
&\|\pd_t\bv\|_{\rL_p(\BT, \rL_q(\Omega))}
+ \|\bv\|_{\rL_p(\BT, \rH^2_q(\Omega))}
+ \|\nabla\fp\|_{\rL_p(\BT, \rL_q(\Omega))}
\\
&\quad
+<\bv_S>_1 + <\nabla\bv_S>_2 
+<\bv_\perp>_{p, \ell}  
+<\nabla\bv_\perp>_{p, \ell+1}
\\
&\qquad\quad \leq C\big(
<\dv\bF_S>_3 + <\bF_S>_2
+<\dv\bF_\perp>_{p, \ell+1} 
+ <\bF_\perp>_{p, \ell} 
\\
&\qquad\qquad\qquad\qquad\qquad\qquad\qquad\qquad
+ \|\bg\|_{\rL_p(\BT, \rL_q(\Omega))}
+\|\bh\|_{\rT_{p,q}(\Gamma\times\BT)}\big).
\end{aligned}
\label{est:tpStokes}
\end{equation}
Here the constant $C>0$ only depends on $\Omega$, $\CT$, $\mu$, $p$, $q$, and $\ell$.
\end{thm}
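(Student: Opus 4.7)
The plan is to reduce the time-periodic problem to a steady-state problem and a purely oscillatory problem via the decomposition \eqref{eq:decomposition}, treat each in its natural functional framework, and finally combine the two.

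First I would split the data as $\bff=\bff_S+\bff_\perp$ and $\bh=\bh_S+\bh_\perp$, and correspondingly seek $\bv=\bv_S+\bv_\perp$ and $\fp=\fp_S+\fp_\perp$. For the steady part $(\bv_S,\fp_S)$, one solves the classical stationary Stokes system in the exterior domain $\Omega$ with right-hand side $\bff_S=\dv\bF_S+\bg_S$ and boundary datum $\bh_S$. Using the known representation via the fundamental solution of Stokes (Galdi-type theory for exterior Stokes problems), together with the weighted pointwise bounds on $\bF_S$ and $\dv\bF_S$, one obtains the pointwise decay estimates $<\bv_S>_1$ and $<\nabla\bv_S>_2$; the $\rH^2_q$ regularity near the boundary and the $\nabla\fp_S\in\rL_q$ bound follow from interior/boundary regularity together with the compact support assumption on $\bg_S$ and the usual cut-off argument relating local strong solutions to the decaying ones at infinity.

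For the purely oscillatory part, I would expand in a Fourier series on $\BT$. Each Fourier coefficient $(\hat\bv_k,\hat\fp_k)$, $k\neq 0$, solves the resolvent Stokes problem \eqref{eq:res.intro} with $\kappa=0$ and $\lambda_k=i\tfrac{2\pi}{\CT}k$. Since $\lambda_k$ lies in the sector $\Sigma_{\varepsilon,\delta}$ for $|k|\geq 1$ with a uniform $\delta>0$, one has an $\sR$-bounded family of solution operators $\CA(k)$ in the relevant operator norms on $\rL_q(\Omega)$. Formula \eqref{eq:Fmult} then expresses $(\bv_\perp,\fp_\perp)$ as an operator-valued Fourier multiplier in the $k$ variable, and the transference principle on the UMD space $\rL_q(\Omega)$ together with Weis's theorem yields the maximal-regularity bounds on $\pd_t\bv_\perp$, $\nabla^2\bv_\perp$ and $\nabla\fp_\perp$ in $\rL_p(\BT,\rL_q(\Omega))$. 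The boundary datum $\bh_\perp\in\rT_{p,q}(\Gamma\times\BT)$ is handled by lifting it to an $\rH^1_p(\BT,\rL_q)\cap\rL_p(\BT,\rH^2_q)$ extension (by definition of the trace space) and reducing to the homogeneous-boundary case.

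The pointwise estimates $<\bv_\perp>_{p,\ell}$ and $<\nabla\bv_\perp>_{p,\ell+1}$ require a separate argument: I would invoke the time-periodic fundamental solution of the Stokes system constructed in \cite{EK1}, write $\bv_\perp$ as its convolution with $\bff_\perp$ (after extending $\bff_\perp$ suitably from $\Omega$ to $\BR^3$ using that the boundary contribution is localized near $\Gamma$), and exploit the known pointwise decay of this fundamental solution to transfer decay rates of $\bF_\perp$ and $\dv\bF_\perp$ into decay rates of $\bv_\perp$ and $\nabla\bv_\perp$. The localized term $\bg_\perp$ and the boundary lift produce compactly supported contributions whose decay is handled by the fundamental solution in the whole space. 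Finally, uniqueness follows by the usual argument: a solution with zero data has Fourier coefficients in a unique solvability class of the resolvent problem, forcing them to vanish.

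The hardest step will be the combination of maximal $\rL^p$-$\rL^q$ regularity with the weighted pointwise bounds, since they live in different functional frameworks; the matching is done precisely through the decomposition into $\bv_S$ and $\bv_\perp$ and the availability of a time-periodic fundamental solution for the oscillatory part. The $\sR$-boundedness of $\{\CA(k)\}_{k\neq 0}$ and the independence of its $\sR$-bound from $k$ on the relevant range is the other delicate technical ingredient.
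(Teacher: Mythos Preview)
Your outline is essentially the same strategy the paper uses (the paper defers the Stokes case to \cite{EiterKyedShibata_PeriodicLp} but carries out the identical scheme in detail for $\kappa\neq 0$ in Section~\ref{sec:Oseen}): decompose into steady and purely oscillatory parts, treat the steady part by classical exterior Stokes theory, obtain maximal $\rL_p$--$\rL_q$ regularity for the oscillatory part via $\sR$-bounded resolvent operators and an operator-valued multiplier theorem, lift the boundary datum with a Bogovski\u\i-type construction, and derive the pointwise decay of $\bv_\perp$ from the time-periodic fundamental solution of \cite{EK1}.

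One point deserves sharpening. You write that ``$\lambda_k$ lies in the sector $\Sigma_{\varepsilon,\delta}$ for $|k|\geq 1$ with a uniform $\delta>0$, so one has an $\sR$-bounded family $\CA(k)$.'' But the available $\sR$-bounds (Theorem~\ref{thm:res}) hold only on $\Sigma_{\varepsilon,\lambda_0}$ for some $\lambda_0>0$ that depends on the domain and is \emph{not} at your disposal; in particular $\lambda_0$ may exceed $2\pi/\CT$. The paper therefore does not apply the multiplier theorem to all $k\neq 0$ at once. Instead it introduces a smooth cutoff $\varphi$ in frequency, treats the high-frequency part $|k|\gtrsim\lambda_0$ via the $\sR$-bounds and Theorem~\ref{thm:multiplier}, and handles the remaining finitely many low modes $0<|k|\leq\lambda_0$ one at a time using only the resolvent estimate~\eqref{est:nslin.res}. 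Your closing remark that the $\sR$-bound ``on the relevant range'' is delicate is exactly this issue; making the high/low split explicit is the missing step.
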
 

If $\kappa\neq 0$,
then \eqref{eq:nslin.tp} is a time-periodic Oseen problem,
and we have to take into account the anisotropic spatial behavior of solutions.
In this case we shall derive the following result 
on existence of solutions with suitable pointwise decay.

\begin{thm}\label{thm:tpOseen}
Let $0<|\kappa|\leq\kappa_0$.
Let $1 < p < \infty$, $3<q<\infty$, $\delta\in(0,\frac{1}{4})$ and $\ell\in(0,3]$.
For all $\bff=\bff_S+\bff_\perp$
such that $\bff_S=\tilde\bff_S+\bg_S$ and $\bff_\perp=\dv\bF_\perp+\bg_\perp$
with $\bg=\bg_S+\bg_\perp\in\rL_p(\BT, \rL_{q,3b}(\Omega)^3)$ and
\[
<\tilde\bff_S>^w_{5/2, 1/2+2\delta}
+<\dv\bF_\perp>_{p, 1+\ell} 
+ <\bF_\perp>_{p, \ell} < \infty,
\]
and for all
$\bh\in\rT_{p,q}(\Gamma\times\BT)$,
problem \eqref{eq:nslin.tp}
admits a unique solution $(\bv,\fp)$ with
$$\bv \in \rH^1_p(\BT, \rL_q(\Omega)^3) \cap 
\rL_p(\BT, \rH^2_q(\Omega)^3), \quad
\fp \in \rL_p(\BT, \hat\rH^1_q(\Omega)),
$$
possessing the estimate
\begin{equation}
\begin{aligned}
&\|\pd_t\bv\|_{\rL_p(\BT, \rL_q(\Omega))}
+ \|\bv\|_{\rL_p(\BT, \rH^2_q(\Omega))}
+ \|\nabla\fp\|_{\rL_p(\BT, \rL_q(\Omega))}
\\
&\quad
+|\kappa|^\delta<\bv_S>_{1,\delta}^w 
+\ |\kappa|^\delta<\nabla\bv_S>_{3/2,1/2+\delta}^w 
+<\bv_\perp>_{p, \ell}  
+<\nabla\bv_\perp>_{p, \ell+1}
\\
&\qquad\quad
\leq C\big(
<\tilde\bff_S>^w_{5/2, 1/2+2\delta}
+<\dv\bF_\perp>_{p, \ell+1} 
+ <\bF_\perp>_{p, \ell} 
\\
&\qquad\qquad\qquad\qquad\qquad\qquad\qquad
+ \|\bg\|_{\rL_p(\BT, \rL_q(\Omega))}
+\|\bh\|_{\rT_{p,q}(\Gamma\times\BT)}\big).
\end{aligned}
\label{est:tpOseen}
\end{equation}
Here the constant $C>0$ only depends on $\Omega$, $\CT$, $\mu$, $p$, $q$, $\delta$, $\ell$ and $\kappa_0$.
\end{thm}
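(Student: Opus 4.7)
The plan is to decompose any time-periodic solution as $\bv = \bv_S + \bv_\perp$ and $\fp = \fp_S + \fp_\perp$ following \eqref{eq:decomposition}. Because the Oseen operator in \eqref{eq:nslin.tp} is autonomous in $t$ and the divergence constraint is linear, averaging over $\BT$ decouples the problem into (i) a stationary three-dimensional exterior Oseen problem for $(\bv_S,\fp_S)$ with right-hand side $\bff_S = \tilde\bff_S + \bg_S$ and Dirichlet data $\bh_S := \int_\BT \bh(\cdot,t)\,\dd t$, and (ii) a purely oscillatory (zero-mean-in-time) Oseen problem for $(\bv_\perp,\fp_\perp)$ with data $\bff_\perp = \dv\bF_\perp + \bg_\perp$ and $\bh_\perp = \bh - \bh_S$. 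Each piece will be analyzed in its own functional setting and the two estimates then combined.

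For the steady part I would invoke the classical theory of the three-dimensional stationary Oseen problem in exterior domains, which provides, for source terms with anisotropically weighted pointwise decay and for suitable boundary data in the trace space of $\rH^2_q(\Omega)$, a unique solution with weighted estimates on $\bv_S$ and $\nabla\bv_S$ matching the anisotropic decay exponents in \eqref{est:tpOseen}. The compactly supported contribution $\bg_S$ is handled by an additional $\rH^2_q$ estimate near the body and by embedding into the weighted space far from it. The factor $|\kappa|^\delta$ in front of $<\bv_S>^w_{1,\delta}$ and $<\nabla\bv_S>^w_{3/2,1/2+\delta}$ is produced by rescaling $x\mapsto |\kappa| x$, under which the Oseen problem becomes $\kappa$-independent and the weighted pointwise norms pick up the stated power of $|\kappa|$; uniformity in $|\kappa|\in(0,\kappa_0]$ is ensured by tracking this rescaling.

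For the purely oscillatory part I would follow the Fourier-series strategy from \cite{EiterKyedShibata_PeriodicLp}. Representing $(\bv_\perp,\fp_\perp) = \sF_\BT^{-1}[k\mapsto\CA(k)\sF_\BT[(\bff_\perp,\bh_\perp)](k)]$ as in \eqref{eq:Fmult} with $k\in\BZ\setminus\{0\}$, the key step is to prove $\sR$-boundedness of the Oseen resolvent family $\{\CA(k): |k|\geq k_0\}$ in $\sL(\rL_q(\Omega)\times\rT_{p,q}(\Gamma);\rH^2_q(\Omega))$ uniformly in $\kappa\in[-\kappa_0,\kappa_0]\setminus\{0\}$, via a perturbation of the known $\sR$-bounds for the Stokes resolvent in which $-\kappa\partial_1$ is absorbed as a lower-order term at large frequencies. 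Combined with an operator-valued Fourier multiplier theorem on the UMD space $\rL_q(\Omega)$, this controls the high-frequency part of $(\bv_\perp,\fp_\perp)$ in the maximal regularity class. The remaining finite collection of modes $\{\pm 1,\dots,\pm(k_0-1)\}$ is treated one at a time by direct resolvent estimates. This delivers the bounds on $\|\pd_t\bv_\perp\|_{\rL_p(\BT,\rL_q(\Omega))}$, $\|\bv_\perp\|_{\rL_p(\BT,\rH^2_q(\Omega))}$ and $\|\nabla\fp_\perp\|_{\rL_p(\BT,\rL_q(\Omega))}$.

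To obtain the pointwise decay $<\bv_\perp>_{p,\ell} + <\nabla\bv_\perp>_{p,\ell+1}$, I would cut off $\bv_\perp$ outside a ball containing $\Gamma$, reducing the far-field behaviour to a purely oscillatory Oseen problem on $\BR^3\times\BT$ with right-hand side of the form $\dv\bF_\perp + \bg_\perp + [\text{commutator terms}]$, the last being compactly supported and already controlled by the maximal regularity estimate. Convolving with the time-periodic Oseen fundamental solution from \cite{EK1} and exploiting its pointwise kernel bounds then yields the weighted estimates with exponent $\ell$ for $\bv_\perp$ and $\ell+1$ for $\nabla\bv_\perp$. The main obstacle, as in \cite{EiterKyedShibata_PeriodicLp}, is precisely this interplay between the $\rL_p(\BT,\rL_q(\Omega))$ maximal regularity theory and the weighted pointwise theory coming from the fundamental solution: the cutoff/commutator remainder terms must be absorbed into the compactly supported input class $\rL_p(\BT,\rL_{q,3b}(\Omega))$ without losing uniformity in $|\kappa|\in(0,\kappa_0]$, and the constants in the resolvent $\sR$-bounds must be monitored so that the scaling $|\kappa|^\delta$ appearing in the steady estimate is consistent with the $\kappa$-independent oscillatory estimate in the final bound \eqref{est:tpOseen}.
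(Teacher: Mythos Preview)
Your proposal is correct and follows essentially the same strategy as the paper: decompose into steady and purely oscillatory parts, invoke the steady exterior Oseen theory for the former, use resolvent $\sR$-bounds plus an operator-valued multiplier theorem for the latter's maximal regularity, and then the time-periodic fundamental solution from \cite{EK1} together with a cutoff argument for the pointwise decay of the oscillatory part.

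The only organisational difference concerns the boundary data. The paper first lifts all of $\bh$ to a compactly supported divergence-free field $\tilde\bv$ by solving the $\kappa=0$ time-periodic Stokes problem (Theorem~\ref{thm:tpStokes}), cutting off, and correcting the divergence with a Bogovski\u\i{} operator; both the steady and oscillatory Oseen subproblems then carry \emph{homogeneous} Dirichlet conditions and an additional compactly supported forcing $\kappa\partial_1\tilde\bv$, which falls into the $\bg$-class. This allows the paper to invoke the resolvent $\sR$-bounds of Theorem~\ref{thm:res} directly, since those are stated only for $\bw|_\Gamma=0$. Your route instead splits $\bh=\bh_S+\bh_\perp$ and keeps inhomogeneous data in each piece; this is viable but requires either extending the $\sR$-bound framework to inhomogeneous traces or performing a separate lifting for $\bh_\perp$, which you do not spell out. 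One minor correction: the factor $|\kappa|^\delta$ in front of the anisotropic weighted norms does not arise from the isotropic rescaling $x\mapsto|\kappa|x$ (this does not leave the Oseen operator $-\mu\Delta-\kappa\partial_1$ invariant); in the paper these weighted pointwise estimates and their precise $\kappa$-dependence are imported from \cite{YS99}, where they come from explicit bounds on the Oseen fundamental solution.
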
 

Alternatively,
the following well-posedness result for $\kappa\neq 0$
does not quantify the decay of 
the data and the solutions
in a pointwise sense,
but merely uses (homogeneous) Sobolev spaces.

\begin{thm}\label{thm:tpOseen.int}
Let $1 < p < \infty$, $1<s<2$.
For all $\bff\in\rL_p(\BT,\rL_s(\Omega)^3)$ and
$\bh\in\rT_{p,s}(\Gamma\times\BT)$
problem \eqref{eq:nslin.tp} with $\kappa\neq 0$
admits a unique solution $(\bv,\fp)$ with 
$\bv=\bv_S+\bv_\perp$ satisfying
$$\bv_S\in\hat\rH^2_s(\Omega)^3\cap\rL_{2s/(2-s)}(\Omega)^3,
\quad \bv_\perp \in \rH^1_p(\BT, \rL_s(\Omega)^3) \cap 
\rL_p(\BT, \rH^2_s(\Omega)^3), \quad
\fp \in \rL_p(\BT, \hat\rH^1_s(\Omega)),
$$
possessing the estimate
\begin{equation}
\begin{aligned}
\|\nabla^2\bv_S\|_{\rL_s(\Omega)}
&+|\kappa|^{1/4}\|\nabla\bv_S\|_{\rL_{4s/(4-s)}(\Omega)}
+|\kappa|^{1/2}\|\bv_S\|_{\rL_{2s/(2-s)}(\Omega)}
+|\kappa|\,\|\partial_1\bv_S\|_{\rL_s(\Omega)}
\\
&\qquad
+\|\pd_t\bv_\perp\|_{\rL_p(\BT, \rL_s(\Omega))}
+ \|\bv_\perp\|_{\rL_p(\BT, \rH^2_s(\Omega))}
+ \|\nabla\fp\|_{\rL_p(\BT, \rL_s(\Omega))}
\\
&\qquad\qquad\qquad\qquad\qquad\qquad
\leq C\big(
\|\bff\|_{\rL_p(\BT, \rL_s(\Omega))}
+\|\bh\|_{\rT_{p,s}(\Gamma\times\BT)}\big).
\end{aligned}
\label{est:tpOseen.int}
\end{equation}
If additionally $\bff\in\rL_p(\BT,\rL_q(\Omega)^3)$ and
$\bh\in\rT_{p,q}(\Gamma\times\BT)$ 
for some $q\in(1,\infty)$, 
then 
\begin{equation}
\begin{aligned}
&\|\nabla^2\bv_S\|_{\rL_q(\Omega)}
+|\kappa|\,\|\partial_1\bv_S\|_{\rL_q(\Omega)}
\leq C\big(
\|\bff\|_{\rL_p(\BT, \rL_q(\Omega))}
+\|\bh\|_{\rT_{p,q}(\Gamma\times\BT)}\big).
\end{aligned}
\label{est:tpOseen.int2}
\end{equation}
If $|\kappa|\leq\kappa_0$ and $1<s<3/2$, 
then the constant $C>0$ only depends on $\Omega$, $\CT$, $\mu$, $p$, $q$, $s$ and $\kappa_0$.
\end{thm}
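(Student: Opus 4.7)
The plan is to decouple \eqref{eq:nslin.tp} into its steady-state and purely oscillatory parts using the splitting \eqref{eq:decomposition}: writing $\bv=\bv_S+\bv_\perp$, $\fp=\fp_S+\fp_\perp$, $\bff=\bff_S+\bff_\perp$ and $\bh=\bh_S+\bh_\perp$, the time-averaging projection commutes with the spatial Oseen operator, so \eqref{eq:nslin.tp} splits into (i) a stationary Oseen problem in $\Omega$ for $(\bv_S,\fp_S)$ with data $(\bff_S,\bh_S)$, and (ii) a purely oscillatory Oseen problem on $\Omega\times\BT$ for $(\bv_\perp,\fp_\perp)$ whose Fourier series in $t$ is supported in $\BZ\setminus\{0\}$. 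Each piece is handled by different machinery and the resulting estimates are then added.

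For the steady part I would first reduce to homogeneous boundary data: lift $\bh\in\rT_{p,s}(\Gamma\times\BT)$ to some $\bu\in\rH^1_p(\BT,\rL_s(\Omega))\cap\rL_p(\BT,\rH^2_s(\Omega))$, take the time-average $\bu_S$, and correct it by a Bogovskii-type solenoidal extension supported in a fixed bounded neighbourhood of $\Gamma$ so that the corrected lift is divergence-free, matches $\bh_S$ on $\Gamma$, and decays at infinity. Subtracting this lift transforms the steady problem into the classical exterior-domain steady Oseen problem with zero boundary datum and a modified right-hand side controlled by $\|\bh\|_{\rT_{p,s}(\Gamma\times\BT)}+\|\bff\|_{\rL_p(\BT,\rL_s(\Omega))}$. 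Invoking the anisotropic $\rL_s$-theory of Galdi for the steady exterior Oseen problem ($1<s<2$) together with the Oseen scaling $\bv_S(x)\mapsto\bv_S(\kappa x)/\kappa$ then yields the $|\kappa|$-weighted estimates for the steady terms on the left-hand side of \eqref{est:tpOseen.int}, and the same scheme in $\rL_q$ gives \eqref{est:tpOseen.int2}; the additional restriction $1<s<3/2$ in the last sentence of the theorem is what renders the Sobolev embeddings and the lifting uniform in $\kappa$ for $|\kappa|\leq\kappa_0$.

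For the purely oscillatory part, following the strategy outlined in the introduction, expand in Fourier series in $t\in\BT$. The Fourier coefficients of $\bv_\perp$ solve the resolvent Oseen system \eqref{eq:res.intro} at frequencies $\lambda_k=i2\pi k/\CT$ with $k\in\BZ\setminus\{0\}$, and these lie uniformly in a sector $\Sigma_{\varepsilon,\delta_0}$ bounded away from the origin. The main ingredient is the $\sR$-boundedness in $\sL(\rL_s(\Omega))$ of the solution family $\{\CA(k)\}$, together with $\{\lambda_k\CA(k)\}$, $\{\nabla^2\CA(k)\}$ and $\{\kappa\partial_1\CA(k)\}$, uniformly in $|\kappa|\leq\kappa_0$; these bounds are established in Section~\ref{sec:Oseen} along the lines of \cite{EiterKyedShibata_PeriodicLp}. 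Since $\rL_s(\Omega)$ is a UMD space, the operator-valued Fourier multiplier theorem applied to the symbol $k\mapsto\CA(k)$ then delivers the required $\rL_p(\BT,\rL_s)$-bounds on $\partial_t\bv_\perp$, $\nabla^2\bv_\perp$ and $\nabla\fp_\perp$; carrying out the same argument in $\rL_q$ yields the corresponding $\rL_q$-bound on $\bv_\perp$ in \eqref{est:tpOseen.int2}. Uniqueness in the stated class is standard: the steady component solves the homogeneous steady Oseen problem in $\hat\rH^2_s$ and vanishes by Galdi's uniqueness theorem, while every Fourier coefficient of $\bv_\perp$ satisfies a homogeneous resolvent Oseen problem at $\lambda_k\neq 0$ and vanishes by the resolvent theory.

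The hardest step is expected to be the uniform $\sR$-boundedness of the Oseen resolvent in the exterior domain with respect to \emph{both} parameters $\lambda$ and $\kappa$: $|\lambda|$ ranges over an unbounded sector while $|\kappa|$ may be arbitrarily small, so a naive perturbation off the Stokes case is insufficient. The resolvent will instead be constructed by gluing a whole-space solution, whose $\sR$-bounds follow from Mikhlin-type analysis of the Oseen symbol, to a compactly supported correction near $\Gamma$, and one must show that the remainder is a compact perturbation whose $\sR$-norm remains controlled independently of $(\lambda,\kappa)$.
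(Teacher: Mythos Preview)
Your plan is essentially the paper's own strategy: decompose into steady and purely oscillatory parts, treat the steady part via Galdi's exterior Oseen theory in $\rL_s$ (and bootstrap to $\rL_q$ for~\eqref{est:tpOseen.int2}), and treat the oscillatory part via $\sR$-bounds for the Oseen resolvent combined with an operator-valued multiplier theorem on $\BT$. The only structural difference is the order of operations: the paper lifts the boundary datum once for the full time-periodic problem (solving a time-periodic Stokes problem with datum $\bh$, then localizing with a cut-off and Bogovski\u\i{} correction to obtain a compactly supported divergence-free $\tilde\bv$), and only afterwards splits into steady and oscillatory; you split first and lift separately. Both work, but the paper's single lift is slightly cleaner because the compactly supported $\tilde\bv$ automatically contributes $\kappa\partial_1\tilde\bv$ as a compactly supported forcing to both sub-problems, so no separate argument is needed for the oscillatory boundary datum.

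Your last paragraph overestimates the difficulty of the $\sR$-bounds. The paper does \emph{not} need a gluing construction uniform in $(\lambda,\kappa)$ over the whole sector; it only needs $\sR$-bounds for $|\lambda|\geq\lambda_0$ with $\lambda_0$ large depending on $\kappa_0$, and there the term $\kappa\partial_1$ is a genuine lower-order perturbation of the Stokes resolvent, so the Stokes $\sR$-bounds transfer directly. The finitely many Fourier modes with $0<|k|\leq\lambda_0$ are then handled individually using only the ordinary resolvent estimate~\eqref{est:nslin.res}, not $\sR$-boundedness. This high-/low-frequency split is precisely the device flagged in the introduction, and it removes the obstruction you anticipate. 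Also note that~\eqref{est:tpOseen.int2} contains only the steady terms $\nabla^2\bv_S$ and $\partial_1\bv_S$, not $\bv_\perp$; the $\rL_q$-bound on $\nabla^2\bv_S$ comes from the bootstrap in Theorem~\ref{thm:stat.Lq}, treating the Oseen problem as a Stokes problem with right-hand side $\bff_S+\kappa\partial_1\bv_S$.
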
 

For a proof of Theorem~\ref{thm:tpStokes}
we refer to \cite[Theorem 5.2]{EiterKyedShibata_PeriodicLp}.
The derivation of Theorem~\ref{thm:tpOseen} and Theorem~\ref{thm:tpOseen.int}
is the scope of Section~\ref{sec:Oseen}.

%%%%%%%%%%%%%%%%% linear theory %%%%%%%%%%%%%%%%
\section{The time-periodic Oseen problem}
\label{sec:Oseen}

In this section we 
show existence of solutions to the time-periodic Oseen problem~\eqref{eq:nslin.tp} for $\kappa\neq0$
with suitable decay properties
as stated in Theorem~\ref{thm:tpOseen}. 
To this end, 
we decompose all functions into a steady-state part
and a purely oscillatory part
according to~\eqref{eq:decomposition}. 
Due to the linearity of the system, 
this leads to two problems,
which we examine in the case of homogeneous boundary conditions.
Firstly, we obtain the steady-state problem
\begin{equation}\label{eq:3.1}
-\mu \Delta \bu - \kappa\pd_1\bu + \nabla\fp = \bff_S, \quad
\dv \bu = 0 \quad\text{in $\Omega$}, \quad
\bu|_\Gamma = 0,
\end{equation}
that is, we study time-independent solutions to~\eqref{eq:nslin.tp}.
Secondly, we consider purely oscillatory solutions to~\eqref{eq:nslin.tp},
which
leads to the problem
\begin{equation}\label{eq:osc}
\pd_t\bv_\perp - \mu\Delta\bv_\perp -\kappa \pd_1\bv_\perp+ \nabla\fp_\perp 
= \bff_\perp,
\quad \dv\bv_\perp = 0\quad 
\text{in $\Omega\times\BT$}, \quad 
\bv_\perp|_{\Gamma\times\BT}  = 0.
\end{equation}
Here the subscript $\bot$ means that all functions have vanishing time mean.
We study these problems separately. 
A combination of the results will lead to a proofs of Theorem~\ref{thm:tpOseen}
and Theorem~\ref{thm:tpOseen.int}.
As explained in Remark~\ref{rem:noslip},
it will be important to derive estimates 
where the constants are independent of $\kappa$
for $|\kappa|\leq\kappa_0$.

\subsection{Existence of time-independent solutions}

For the steady-state Oseen problem~\eqref{eq:3.1},
existence of solutions is guaranteed by the following result. 
It characterizes the solution in terms of integrability properties 
and pointwise decay
for suitable forcing terms.

\begin{thm}\label{thm:stat} 
Let $3 < q < \infty$, 
$0 < \delta < 1/4$ and $0<|\kappa|\leq \kappa_0$. 
Let $\bff_S = \tilde\bff + \bg$, where  
$<\tilde\bff>^w_{5/2, 1/2+2\delta} < \infty$ and $\bg \in L_{q, 3b}(\Omega)^3$. 
Then, problem \eqref{eq:3.1} admits a unique solution $(\bu,\fp)\in \rH^2_q(\Omega)^3\times \rH^1_q(\Omega)$ possessing the estimate
\begin{equation}\label{est:statOseen}
\|\bu\|_{\rH^2_q(\Omega)}
+|\kappa|^\delta<\bu>^w_{1,\delta} + |\kappa|^\delta<\nabla \bu>^w_{3/2, 1/2+\delta} 
+ \|\fp\|_{\rH^1_q(\Omega)}
\leq C
\big(<\tilde\bff>^w_{5/2, 1/2+2\delta} + \|\bg\|_{\rL_q(\Omega)}\big)
\end{equation}
for some constant $C>0$ depending solely  on $\Omega$, $\mu$, $q$, $\delta$ and $\kappa_0$.
\end{thm}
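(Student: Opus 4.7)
The plan is to construct the solution by combining a whole-space solution for the weighted-decay part $\tilde\bff$ with a bounded-domain correction for the compactly supported part $\bg$. First, I would extend $\tilde\bff$ by zero to $\BR^3$ and solve the whole-space Oseen problem
\[
-\mu\Delta \bU - \kappa\partial_1 \bU + \nabla P = \tilde\bff, \qquad \dv \bU = 0 \quad\text{in } \BR^3,
\]
using convolution with the Oseen fundamental tensor. The goal here is to show that the known anisotropic pointwise bounds on the Oseen kernel propagate the input decay to
\[
|\kappa|^{\delta}<\bU>^w_{1,\delta} + |\kappa|^{\delta}<\nabla \bU>^w_{3/2,1/2+\delta} \leq C<\tilde\bff>^w_{5/2,1/2+2\delta},
\]
together with the local regularity $\|\bU\|_{\rH^2_q(B_{4b})}+\|P\|_{\rH^1_q(B_{4b})}\leq C<\tilde\bff>^w_{5/2,1/2+2\delta}$, with a constant uniform for $|\kappa|\leq\kappa_0$.

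Next, I would glue this to an exterior-domain correction. Fix a cutoff $\chi\in C^\infty_c(\BR^3)$ with $\chi=1$ on $B_{2b}$ and $\chi=0$ on $\BR^3\setminus B_{3b}$, and look for $(\bu,\fp)$ of the form $\bu=(1-\chi)\bU+\bv$, $\fp=(1-\chi)P+\fq$. Then $(\bv,\fq)$ must satisfy an Oseen problem on the bounded region $\Omega_{3b}=\Omega\cap B_{3b}$ with homogeneous boundary condition on $\Gamma\cup\partial B_{3b}$, inhomogeneous divergence $\dv\bv=\nabla\chi\cdot\bU$ supported in the annulus $\{2b<|x|<3b\}$, and an $\rL_q$ right-hand side composed of $\bg$ and the commutator terms $\mu(\Delta\chi)\bU+2\mu(\nabla\chi\cdot\nabla)\bU+\kappa(\partial_1\chi)\bU-(\nabla\chi)P$. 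Noting that $\nabla\chi\cdot\bU$ has zero mean over $\Omega_{3b}$, I would use the Bogovski\u\i{} operator on $\Omega_{3b}$ to remove the inhomogeneous divergence, reducing the problem to a standard Stokes-type system. Since $|\kappa|\leq\kappa_0$ on a bounded domain, the term $-\kappa\partial_1$ is a lower-order, bounded perturbation of $-\mu\Delta$, so classical Stokes theory combined with a Neumann-series / compactness argument yields a unique $(\bv,\fq)\in\rH^2_q(\Omega_{3b})\times\rH^1_q(\Omega_{3b})$ with estimate uniform in $\kappa\in[-\kappa_0,\kappa_0]$. Combining this with the whole-space bounds on $\bU$ produces the full estimate \eqref{est:statOseen}. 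Uniqueness follows because any solution in the given class decays sufficiently fast that the standard duality/energy argument for the Oseen operator in an exterior domain applies, after reducing to homogeneous data.

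\textbf{Main obstacle.}
The core technical step is the first one: establishing the $|\kappa|^{-\delta}$-dependent anisotropic weighted estimates for the whole-space solution operator. This requires splitting the convolution $\bU=E_\kappa*\tilde\bff$ into contributions from inside and outside the paraboloidal wake $\{|x|-x_1\lesssim|\kappa|^{-1}\}$, and matching the algebraic decay of $\tilde\bff$ against the known pointwise bounds on the Oseen fundamental tensor (which themselves carry the anisotropy $|x|^{-1}(1+|\kappa|(|x|-x_1))^{-1}$). The somewhat delicate exponents $5/2$ and $1/2+2\delta$ in the hypothesis are precisely what is needed so that the resulting integral converges and distributes a factor $|\kappa|^{-\delta}$ to each of the two target weights; verifying this carefully, while keeping constants independent of $\kappa\in[-\kappa_0,\kappa_0]\setminus\{0\}$, is the main work of the proof. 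All remaining steps (Bogovski\u\i{} correction, bounded-domain Stokes/Oseen regularity, and the gluing identities) are standard given this estimate.
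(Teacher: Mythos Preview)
Your strategy for the anisotropic weighted estimates on the whole-space piece $\bU=E_\kappa*\tilde\bff$ is sound and is indeed the analytic core of the result. The gluing step, however, contains a genuine gap. When you write $\bu=(1-\chi)\bU+\bv$ and solve for $(\bv,\fq)$ on the \emph{bounded} region $\Omega_{3b}$ with homogeneous Dirichlet data on $\Gamma\cup\partial B_{3b}$, extending $\bv$ by zero to $\Omega\setminus B_{3b}$ will in general produce a jump of $\nabla\bv$ across $\partial B_{3b}$. Consequently $\bu\notin\rH^2_q(\Omega)$ and the Oseen equation is not satisfied distributionally on the interface. The point is that after subtracting $(1-\chi)\bU$ the remaining problem for $\bv$ is still an \emph{exterior} Oseen system (now with compactly supported data), and its solution is not compactly supported; you cannot confine it to $\Omega_{3b}$.

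The standard fix---and what lies behind the references the paper invokes---is the parametrix scheme: patch the whole-space solution and a bounded-domain solution with a cutoff to obtain an \emph{approximate} solution, compute the residual (supported in the annulus, hence yielding a compact operator $K$ on $\rL_q$), and invert $I+K$ by the Fredholm alternative, using uniqueness for the exterior Oseen problem to exclude a nontrivial kernel. This produces the $\rH^2_q$ solvability with constants uniform for $|\kappa|\leq\kappa_0$; the weighted pointwise bounds then follow a posteriori from the representation via the fundamental solution. The paper itself does not carry this out but simply verifies $\tilde\bff\in\rL_1(\Omega)\cap\rL_q(\Omega)$ from the weighted assumption and then quotes \cite[Theorems~3.1 and~4.1]{YS99} for both the $\rH^2_q$ theory and the anisotropic decay. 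Your proposal has the right ingredients but is missing precisely the compactness/Fredholm step that makes the localization work.
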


\begin{proof}
We first show that $\bff_S\in\rL^1(\Omega)^3\cap\rL^q(\Omega)^3$
and
\begin{equation}\label{eq:3.2}\|\tilde\bff\|_{\rL_1(\Omega)} + 
\|\tilde\bff\|_{\rL_q(\Omega)} \leq C_\delta<\tilde\bff>^w_{5/2, 1/2+2\delta}.
\end{equation}
In fact, using the polar coordinates $x_1=r\cos\theta$, $x_2= r\sin\theta
\cos\varphi$, $x_3 = r\sin\theta\sin\varphi$ for $r>0$, $0 \leq \theta < \pi$ and 
$0 \leq \varphi < 2\pi$
and a change of variables, we obtain
\begin{align*}
\|\tilde\bff\|_{\rL_1(\Omega)} &\leq \ <\tilde\bff>^w_{5/2, 1/2+2\delta}
\int_{\BR^3}(1+|x|)^{-5/2}(1+|x|-x_1)^{-1/2-2\delta}\,\dd x \\
& \leq 4\pi\,<\tilde\bff>^w_{5/2, 1/2+2\delta}\int^\infty_0\int^{\pi/2}_0(1+r)^{-5/2}
\frac{r^2\sin\theta}{(1 + r(1-\cos\theta))^{1/2+2\delta}}\,\dd r\,\dd\theta
\\
& = 4\pi\,<\tilde\bff>^w_{5/2, 1/2+2\delta}\int^\infty_0\int^1_0\frac{(1+r)^{-5/2}r^2}
{(1+rt)^{1/2+2\delta}}\,\dd r \dd t \\
& \leq C_\delta<\tilde\bff>^w_{5/2, 1/2+2\delta}
\int^\infty_0(1+r)^{-5/2}(1+r)^{1/2-2\delta}
r\,\dd r \\
&\leq C_\delta<\tilde\bff>^w_{5/2, 1/2+2\delta}.
\end{align*}
Since we clearly have $\|\tilde\bff\|_{\rL_\infty(\Omega)}\leq \ <\tilde\bff>_{5/2,1/2+2\delta}^w$, 
we thus conclude 
\eqref{eq:3.2}.
Now the estimates of $\|\bu\|_{\rH^2_q(\Omega)}$ and $\|\fp\|_{\rH^1_q(\Omega)}$
are a direct consequence of~\cite[Theorem 3.1]{YS99},
and the remaining estimates follow from~\cite[Theorem 4.1]{YS99}.
\end{proof}

In contrast to the previous framework, 
where decay is specified in a pointwise sense,
one can also show existence within 
homogeneous Sobolev spaces.

\begin{thm}\label{thm:stat.Lq} 
Let $1 < s < 3/2$, $1<q<\infty$ and $0<|\kappa|\leq \kappa_0$. 
Let $\bff_S \in\rL_q(\Omega)^3\cap\rL_s(\Omega)^3$. 
Then, problem \eqref{eq:3.1} admits a unique solution $(\bu,\fp)\in \hat\rH^2_{q}(\Omega)^3\times \hat\rH^1_{q}(\Omega)$ possessing the estimate
\begin{equation}\label{est:statOseen.Lq}
\begin{aligned}
&\|\nabla^2\bu\|_{\rL_s(\Omega)}
+\|\nabla^2\bu\|_{\rL_q(\Omega)}
+|\kappa|^{1/4}\|\nabla\bu\|_{\rL_{4s/(4-s)}(\Omega)}
+|\kappa|^{1/2}\|\bu\|_{\rL_{2s/(2-s)}(\Omega)}
\\
&\
+|\kappa|\,\|\partial_1\bu\|_{\rL_s(\Omega)}
+\|\nabla\fp\|_{\rL_s(\Omega)}
+\|\nabla\fp\|_{\rL_q(\Omega)}
+\|\fp\|_{\rL_{3s/(3-s)}(\Omega)}
\leq C\big(\|\bff_S\|_{\rL_s(\Omega)}+\|\bff_S\|_{\rL_q(\Omega)}\big)
\end{aligned}
\end{equation}
for some constant $C>0$ depending solely  on $\Omega$, $\mu$, $q$, $s$ and $\kappa_0$.
\end{thm}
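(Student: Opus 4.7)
The approach will be to decompose the solution into a whole-space Oseen part and a localized boundary correction, patched via cut-offs and Bogovski corrections. Extending $\bff_S$ by zero to $\BR^3$, I first solve
\[
-\mu\Delta\bu_\infty - \kappa\partial_1\bu_\infty + \nabla\fp_\infty = \bff_S, \qquad \dv\bu_\infty = 0 \quad \text{in } \BR^3,
\]
using the explicit Oseen fundamental solution and its Fourier representation. Mikhlin-type multiplier analysis on the associated symbol yields
\[
\|\nabla^2\bu_\infty\|_{\rL_r(\BR^3)} + |\kappa|\,\|\partial_1\bu_\infty\|_{\rL_r(\BR^3)} + \|\nabla\fp_\infty\|_{\rL_r(\BR^3)} \leq C\|\bff_S\|_{\rL_r(\BR^3)}
\]
for $r\in\{s,q\}$, while the classical anisotropic decay estimates of the Oseen kernel deliver
\[
|\kappa|^{1/4}\|\nabla\bu_\infty\|_{\rL_{4s/(4-s)}(\BR^3)} + |\kappa|^{1/2}\|\bu_\infty\|_{\rL_{2s/(2-s)}(\BR^3)} \leq C\|\bff_S\|_{\rL_s(\BR^3)}
\]
for $1<s<2$, with $C$ uniform in $\kappa$ over $|\kappa|\leq\kappa_0$ by an anisotropic rescaling of the Oseen kernel. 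The pressure bound $\|\fp_\infty\|_{\rL_{3s/(3-s)}}\leq C\|\bff_S\|_{\rL_s}$ follows from the Sobolev embedding $\hat\rH^1_s(\BR^3)\hookrightarrow\rL_{3s/(3-s)}(\BR^3)$.

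To enforce the vanishing boundary condition on $\Gamma$, I pick a cut-off $\chi\in C^\infty_c(B_{3b})$ with $\chi\equiv 1$ on $\overline{B_{2b}}$ and set $\bu_\chi:=\chi\bu_\infty-\BB[\nabla\chi\cdot\bu_\infty]$, where $\BB$ denotes the Bogovski operator on the annulus $B_{3b}\setminus\overline{B_{2b}}$. Then $\bu_\chi$ is compactly supported in $B_{3b}$, divergence-free in $\Omega$, and coincides with $\bu_\infty$ on $\Gamma$, with norms controlled by $\bu_\infty|_{\Omega_{3b}}$ via local regularity. The residual $\widetilde\bu:=\bu-(\bu_\infty-\bu_\chi)$ then has to solve an Oseen problem in $\Omega$ with homogeneous boundary data and a forcing term compactly supported in $B_{3b}$, controlled in $\rL_s(\Omega)\cap\rL_q(\Omega)$. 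Existence of $\widetilde\bu$ in $\hat\rH^2_s(\Omega)\cap\hat\rH^2_q(\Omega)$ with the desired anisotropic bounds is obtained by combining the Oseen resolvent theory in $\Omega$ at $\lambda=0$ for compactly supported data, derived through a perturbation argument from the whole space (again via cut-offs and Bogovski corrections) together with the bounded-domain Stokes theory on $\Omega_{3b}$, with a Fredholm-type argument to exclude a non-trivial kernel; uniqueness ultimately rests on a duality argument against the adjoint Oseen problem tested in $\rL_{s'}(\Omega)$ with $s'=s/(s-1)>3$.

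The main obstacle is the uniformity of $C$ in $\kappa$ as $\kappa\to 0$: in this limit the Oseen system degenerates into the exterior Stokes system, whose $\hat\rH^2$-theory in homogeneous Sobolev spaces is only available in the restricted range $1<s<3/2$, which is precisely the hypothesis of the theorem. This restriction is used twice — on the whole-space side to keep the anisotropic embedding exponent $2s/(2-s)<6$ in the admissible regime, and on the exterior-domain side to stay within the well-posedness window for the exterior Stokes problem. Assembling the whole-space bounds of step one with the bounds on the localized correction of step two yields the full estimate \eqref{est:statOseen.Lq}; the $\rL_q$-bounds for $\nabla^2\bu$ and $|\kappa|\,\partial_1\bu$ for arbitrary $q\in(1,\infty)$ follow by bootstrap from the $\rL_s$ control using interior Oseen regularity of Calderón--Zygmund type with constants uniform in $\kappa$ for $|\kappa|\leq\kappa_0$.
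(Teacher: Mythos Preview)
Your approach is a valid alternative but takes a substantially longer route than the paper. The paper's proof is essentially two citations plus a bootstrap: it invokes Galdi's exterior-domain Oseen theory \cite[Theorem~2.1]{Galdi_Oseen1992} directly to obtain the full $\rL_s$-based anisotropic estimate (existence, uniqueness, and all the norms carrying $\kappa$-powers, uniformly for $|\kappa|\leq\kappa_0$); then, to lift $\nabla^2\bu$ and $\nabla\fp$ from $\rL_s$ to $\rL_q$, it rewrites the Oseen system as a \emph{Stokes} problem with extra forcing $\kappa\partial_1\bu\in\rL_s\cap\rL_{4s/(4-s)}$, applies exterior Stokes regularity \cite[Theorem~V.4.8]{Galdi} at the intermediate exponent $r=\min\{q,4s/(4-s)\}$, and iterates until $r=q$. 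What you do---solve on $\BR^3$, localize via cut-off plus Bogovski\u\i, and treat the compactly supported remainder by a parametrix/Fredholm argument---is precisely how Galdi's cited theorem is itself established, so you are reconstructing that result from scratch rather than invoking it. Your identification of the restriction $1<s<3/2$ as the condition needed for $\kappa$-uniformity (via the Stokes limit) is correct and matches the paper's hypotheses. Your bootstrap to $\rL_q$ via ``interior Oseen regularity'' is in the same spirit as the paper's, though the paper's Stokes reformulation is cleaner because it avoids having to control $\kappa\partial_1\bu$ in $\rL_q$ a priori. The payoff of your route is self-containment; the payoff of the paper's is that the hard work has already been done in the literature.
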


\begin{proof}
Let us assume $s\leq q$. Otherwise, we reverse the role of $s$ and $q$. 
Then~\cite[Theorem 2.1]{Galdi_Oseen1992} implies the existence of 
a unique solution $(\bu,\fp)$ 
satisfying 
\begin{equation}
\begin{aligned}
\|\nabla^2\bu\|_{\rL_s(\Omega)}
&+|\kappa|^{1/4}\|\nabla\bu\|_{\rL_{4s/(4-s)}(\Omega)}
+|\kappa|^{1/2}\|\bu\|_{\rL_{2s/(2-s)}(\Omega)}
\\
&\quad
+|\kappa|\,\|\partial_1\bu\|_{\rL_s(\Omega)}
+\|\nabla\fp\|_{\rL_s(\Omega)}
+\|\fp\|_{\rL_{3s/(3-s)}(\Omega)}
\leq C\|\bff_S\|_{\rL_s(\Omega)}
\end{aligned}
\label{est:statOseen.Lq.1}
\end{equation}
with $C>0$ independent of $\kappa$.
We now consider $(\bu,\fp)$ as a solution to the Stokes problem
\[
-\mu \Delta \bu + \nabla\fp = \bff_S + \kappa\pd_1\bu , \quad
\dv \bu = 0 \quad\text{in $\Omega$}, \quad
\bu|_\Gamma = 0,
\]
with right-hand side $\bff + \kappa\pd_1\bu\in\rL_r(\Omega)$
with $r=\min\{q,\frac{4s}{4-s}\}$
since $\partial_1\bu\in\rL_s(\Omega)\cap\rL_{4s/(4-s)}(\Omega)$ 
by the previous estimate.
From~\cite[Theorem V.4.8]{Galdi} we conclude
\[
\begin{aligned}
\|\bu\|_{\rL_r(\Omega)}+\|\nabla\fp\|_{\rL_r(\Omega)}
&\leq C\big( \|\bff_S\|_{\rL_{r}(\Omega)} + |\kappa|\, \|\partial_1\bu\|_{\rL_{r}(\Omega)}\big)
\\
&\leq C\big( \|\bff_S\|_{\rL_{r}(\Omega)}
+ |\kappa|\, \|\partial_1\bu\|_{\rL_{s}(\Omega)}
+|\kappa|\, \|\partial_1\bu\|_{\rL_{4s/(4-s)}(\Omega)}\big)
\\
&\leq  C\big( \|\bff_S\|_{\rL_{r}(\Omega)} + \|\bff_S\|_{\rL_{s}(\Omega)}),
\end{aligned}
\]
where we used estimate~\eqref{est:statOseen.Lq.1} as well as $r\geq s$ and $|\kappa|\leq \kappa_0$.
If $r=q$, this completes the proof.
If $r<q$, then $r=4s/(4-s)$, and 
Sobolev's inequality and classical interpolation 
implies $\nabla\bu\in\rL_{\tilde r}(\Omega)$
for $\tilde r\in[r,3r/(3-r)]$ if $r<3$
and for $\tilde r\in[r,\infty)$ if $r>3$. 
By repeating the above argument with $r$ replaced with $\tilde r$,
an iteration will finally lead to the asserted estimate for $r=q$.
\end{proof}

\subsection{Existence of purely oscillatory solutions}\label{subsec:2}

Existence of solutions to the purely oscillatory problem~\eqref{eq:osc}
is guaranteed by the following theorem. 
Observe that here it is not necessary to distinguish between the cases 
$\kappa=0$ and $\kappa\neq0$.
\begin{thm} \label{thm:osc} 
Let $1 < p, q < \infty$ and $\kappa \in \BR$ with
$|\kappa|\leq\kappa_0$. Then, 
for any $\bff_\perp \in \rL_p(\BT, \rL_q(\Omega)^3)$
with $\int_\BT \bff_\perp(\cdot, s)\,\dd s=0$, problem \eqref{eq:osc}
admits a solution $(\bv_\perp,\fp_\perp)$ with
$$\bv_\perp \in \rH^1_p(\BT, \rL_q(\Omega)^3) \cap 
\rL_p(\BT, \rH^2_q(\Omega)^3), \enskip 
\fp_\perp \in \rL_p(\BT, \hat\rH^1_q(\Omega)),
\enskip \int_\BT\bv_\perp(\cdot, s)\,\dd s= 0, 
\enskip \int_\BT\fp_\perp(\cdot, s)\,\dd s=0,
$$
which satisfies the estimate
\begin{equation}\label{est:osc}
\|\pd_t\bv_\perp\|_{\rL_p(\BT, \rL_q(\Omega))}
+ \|\bv_\perp\|_{\rL_p(\BT, \rH^2_q(\Omega))}
+ \|\nabla\fp_\perp\|_{\rL_p(\BT, \rL_q(\Omega))}
\leq C\|\bff_\perp\|_{\rL_p(\BT, \rL_q(\Omega))}
\end{equation}
for some constant $C>0$ only depending on $\Omega$, $\CT$, $\mu$, $p$, $q$ and $\kappa_0$.

If $(\tilde\bv_\perp,\tilde\fp_\perp)$ is another solution to \eqref{eq:osc} with
$$
\tilde\bv_\perp \in \rH^1_r(\BT, \rL_s(\Omega)^3) \cap 
\rL_r(\BT, \rH^2_s(\Omega)^3), \enskip 
\tilde\fp_\perp \in \rL_r(\BT, \hat\rH^1_s(\Omega)),
$$
for some $1<r,s<\infty$,
then $\bv_\perp=\tilde\bv_\perp$ and $\nabla\fp_\perp=\nabla\tilde\fp_\perp$.
\end{thm}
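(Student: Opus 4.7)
The plan is to pass to Fourier series in time. Since $\bff_\perp$ has vanishing time mean, $\sF_\BT[\bff_\perp](0)=0$, and for each $k\in\BZ\setminus\{0\}$ the Fourier coefficients of any formal solution must satisfy the generalized Oseen resolvent problem
\begin{equation}
\label{eq:plan.res}
i\lambda_k \hat\bv_k - \mu\Delta\hat\bv_k - \kappa\partial_1\hat\bv_k + \nabla\hat\fp_k = \hat\bff_k,\qquad \dv\hat\bv_k = 0 \text{ in }\Omega, \qquad \hat\bv_k|_\Gamma = 0,
\end{equation}
with $\lambda_k := 2\pi k/\CT$. The key observation is that $|\lambda_k|\geq 2\pi/\CT$ uniformly in $k\neq 0$, so all relevant resolvent parameters lie in a fixed sector $\Sigma_{\varepsilon,\delta_0}$ with $\delta_0 = 2\pi/\CT>0$, bounded away from the origin. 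This is precisely what allows us to bypass the low-frequency decomposition that was necessary for the full time-periodic problem: there is no zero-mode contribution to handle separately.

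The central analytic ingredient is an $\sR$-bounded solution operator family $\CA(\lambda,\kappa)$ for \eqref{eq:plan.res} with $\lambda$ ranging over $\Sigma_{\varepsilon,\delta_0}$, such that the families
\[
\bigl\{(\lambda, \lambda^{1/2}\nabla, \nabla^2)\CA(\lambda,\kappa) \mid \lambda \in \Sigma_{\varepsilon,\delta_0}\bigr\}
\]
are $\sR$-bounded in $\sL(\rL_q(\Omega))$, with $\sR$-bound independent of $\kappa$ for $|\kappa|\leq\kappa_0$. I would construct $\CA(\lambda,\kappa)$ by the standard localization scheme used in~\cite{EiterKyedShibata_PeriodicLp}: solve the whole-space problem by explicit Fourier analysis on $\BR^3$, solve a bent half-space model problem near $\Gamma$, and patch via a partition of unity, treating the resulting lower-order and commutator terms as $\sR$-bounded compact perturbations absorbed on the sector $\Sigma_{\varepsilon,\delta_0}$. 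The \emph{main obstacle} is uniformity in $\kappa$: the drift $\kappa\partial_1$ is not relatively bounded by the Stokes operator, but because $|\lambda|$ stays bounded below, the interpolation $\|\nabla\bv\|_{\rL_q}^2 \leq C\|\bv\|_{\rL_q}\|\nabla^2\bv\|_{\rL_q}$ combined with the resolvent bound $|\lambda|\|\bv\|_{\rL_q} + \|\nabla^2\bv\|_{\rL_q} \leq C\|\bff\|_{\rL_q}$ and Young's inequality lets us absorb $\kappa\partial_1\bv$ into the left-hand side with a constant depending only on $\kappa_0$, $\delta_0$ and $\varepsilon$.

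With the $\sR$-bound in hand, existence together with estimate \eqref{est:osc} follows from the operator-valued Fourier multiplier theorem on the torus: since $\rL_q(\Omega)$ is UMD (as $1<q<\infty$) and the symbols $k\mapsto i\lambda_k\CA(i\lambda_k,\kappa)$ and $k\mapsto\nabla^2\CA(i\lambda_k,\kappa)$ are $\sR$-bounded sequences indexed by $\BZ\setminus\{0\}$, the associated transference multipliers are bounded on $\rL_p(\BT;\rL_q(\Omega))$. Applying them to $\bff_\perp$ yields $(\bv_\perp,\fp_\perp)$ whose Fourier modes solve \eqref{eq:plan.res}; vanishing time-mean is automatic since the zeroth mode is set to zero by construction. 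For the pressure one recovers $\nabla\fp_\perp$ directly from the multiplier acting on $\bff_\perp$, and $\fp_\perp$ then belongs to $\rL_p(\BT,\hat\rH^1_q(\Omega))$ after fixing a gauge with zero time-mean.

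Uniqueness across different integrability exponents $(p,q)$ and $(r,s)$ reduces, via Fourier expansion in $t$, to uniqueness of the resolvent problem \eqref{eq:plan.res} in each $\rL_\sigma(\Omega)$ class for $1<\sigma<\infty$. Since $i\lambda_k\in\Sigma_{\varepsilon,\delta_0}$ is bounded away from the genuinely $\kappa$-sensitive point $\lambda=0$, the Oseen resolvent defines an isomorphism in every $\rL_\sigma$-setting, so each Fourier mode of $\bv_\perp-\tilde\bv_\perp$ vanishes and each Fourier mode of $\nabla(\fp_\perp-\tilde\fp_\perp)$ vanishes. Summing the Fourier series then yields the stated identification $\bv_\perp=\tilde\bv_\perp$ and $\nabla\fp_\perp=\nabla\tilde\fp_\perp$.
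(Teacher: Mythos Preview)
Your overall strategy---expand in Fourier series in time, solve the resolvent problem \eqref{eq:plan.res} for each nonzero mode, and reassemble via an operator-valued multiplier theorem---is exactly what the paper does. The gap is your claim that the $\sR$-bounds for the Oseen resolvent family hold on \emph{all} of $\Sigma_{\varepsilon,\delta_0}$ with $\delta_0=2\pi/\CT$. That is precisely the obstruction the paper works around, not something that can be ``bypassed.''

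Concretely: the localization/parametrix construction for an exterior domain yields $\sR$-bounds only for $|\lambda|\geq\lambda_0$ with some \emph{large} threshold $\lambda_0$ depending on $\Omega$, $\mu$, $q$, $\varepsilon$ and $\kappa_0$ (this is the content of Theorem~\ref{thm:res} in the paper). The commutator and remainder terms from patching are absorbed by a Neumann series, which needs $|\lambda|$ large; compactness alone gives solvability for each fixed $\lambda$, but not uniform $\sR$-bounds across the sector. Your perturbation argument for the drift fails for the same reason: from $\|\nabla\bv\|_q^2\leq C\|\bv\|_q\|\nabla^2\bv\|_q$ and Young's inequality you get $|\kappa|\,\|\partial_1\bv\|_q\leq \varepsilon\|\nabla^2\bv\|_q + C\kappa_0^2\varepsilon^{-1}\|\bv\|_q$, and to absorb the second term into $|\lambda|\,\|\bv\|_q$ with $|\lambda|\geq 2\pi/\CT$ you would need $\kappa_0^2\CT$ small, which is not assumed. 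Hence $\lambda_0$ genuinely depends on $\kappa_0$ and may exceed $2\pi/\CT$.

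The paper's remedy is to split off the finitely many low modes $0<|k|\leq \lambda_0\CT/2\pi$: the high-frequency part is handled by the multiplier theorem (Theorem~\ref{thm:multiplier}) using the $\sR$-bounds on $\Sigma_{\varepsilon,\lambda_0}$, while the low modes are added in as a finite sum, each estimated via the pointwise resolvent bound \eqref{est:nslin.res}, which \emph{does} hold for every $\lambda\in\rho[\kappa]$. You appear to have conflated this high/low split with the steady/oscillatory decomposition; they are separate issues, and the former is still needed inside the purely oscillatory problem.

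Your uniqueness sketch is also a bit thin: the difference $\bv_\perp-\tilde\bv_\perp$ has Fourier coefficients lying only in $\rH^2_q(\Omega)+\rH^2_s(\Omega)$, so invoking ``isomorphism in every $\rL_\sigma$-setting'' is not immediately applicable. The paper bootstraps each mode from the sum space into a single $\rL_q$-class via elliptic regularity for the Stokes operator and Sobolev embeddings before concluding.
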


To prove Theorem~\ref{thm:osc}, we 
use the method recently introduced in~\cite{EiterKyedShibata_PeriodicLp},
which is based on the existence of suitable $\sR$-bounds for solution operators to 
the corresponding resolvent problem
\begin{equation}\label{eq:2.1}
\lambda \bw - \mu \Delta \bw - \kappa \pd_1\bw + \nabla\fr = \bff,
\quad \dv \bw=0
\quad\text{in $\Omega$}, \quad 
\bw|_\Gamma  = 0.
\end{equation}
For this problem, we have the following theorem.

\begin{thm}\label{thm:res} 
Let $1 < q < \infty$
and $0\leq|\kappa|\leq\kappa_0$.  
Let
\begin{equation}
\label{eq:res.set}
\rho[\kappa]:=
\begin{cases}
\BC\setminus(-\infty,0] &\text{if } \kappa=0,
\\
\big\{\lambda\in\BC\mid |\kappa|\,\mathrm{Re}(\lambda)+\mathrm{Im}(\lambda)^2>0\big\} &\text{if } \kappa\neq0,
\end{cases}
\end{equation}
There exist operator families 
$(\sS_\kappa(\lambda)) \subset\sL(\rL_q(\Omega)^3,\rH^2_q(\Omega)^3)$
and $(\sP_\kappa(\lambda)) \subset \sL(\rL_q(\Omega)^3, \hat \rH^1_q(\Omega)))$
such that for every 
$\lambda\in \rho[\kappa]$
and every $\bff \in \rL_q(\Omega)^3$
the pair $(\bw,\fr)=(\sS_\kappa(\lambda)\bff,\sP_\kappa(\lambda)\bff)$
is the unique solution to \eqref{eq:2.1}
and satisfies the estimate
\begin{equation}\label{est:nslin.res}
|\lambda|\,\| \sS_\kappa(\lambda)\bff\|_{\rL_q(\Omega)}
+\|\nabla^2 \sS_\kappa(\lambda)\bff\|_{\rL_q(\Omega)}
+ \|\nabla\sP_\kappa(\lambda)\bff\|_{\rL_q(\Omega)}
\leq C\|\bff\|_{\rL_q(\Omega)}.
\end{equation}
If $0<\varepsilon<\pi/2$ and 
$\delta>0$ such that $\lambda\in\Sigma_{\varepsilon,\delta}$, 
then $C$ only depends on $\Omega$, $q$, $\varepsilon$, $\delta$ and $\kappa_0$.
Moreover, there exist constants $\lambda_0, r_{0}>0$, 
depending on $\Omega$, $\mu$, $q$, $\varepsilon$ and $\kappa_0$,
such that
$$\sS_\kappa \in {\rm Hol}\,(\Sigma_{\varepsilon, \lambda_0}, \sL(\rL_q(\Omega)^3, 
\rH^2_q(\Omega)^3)), \quad 
\sP_\kappa \in {\rm Hol}\,(\Sigma_{\varepsilon, \lambda_0}, \sL(\rL_q(\Omega)^3, 
\hat \rH^1_q(\Omega))),
$$
and
\[
\begin{aligned}
\sR_{\sL(\rL_q(\Omega)^3, \rH^{2-j}_q(\Omega)^3)}
(\{(\lambda\pd_\lambda)^\ell(\lambda^{j/2}\sS_\kappa(\lambda)) \mid \lambda \in \Sigma_{\varepsilon, \lambda_0}\})
&\leq r_{0}, \\
\sR_{\sL(\rL_q(\Omega)^3, \rL_q(\Omega)^3)}
(\{(\lambda\pd_\lambda)^\ell(\nabla\sP_\kappa(\lambda)) \mid \lambda \in \Sigma_{\varepsilon, \lambda_0}\})
&\leq r_{0}
\end{aligned}
\]
for $\ell=0,1$, $j=0,1,2$.
\end{thm}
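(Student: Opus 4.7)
My plan is to construct $\sS_\kappa$ and $\sP_\kappa$ via a parametrix that combines the solution operator for the resolvent problem in the whole space $\BR^3$ with the one on a bounded smooth subdomain, in the spirit of Shibata's approach to $\sR$-bounded resolvent theory. The set $\rho[\kappa]$ is precisely the region in which the Oseen symbol $\lambda+\mu|\xi|^2-i\kappa\xi_1$ is non-zero for every $\xi\in\BR^3$; restricting to a subsector $\Sigma_{\varepsilon,\lambda_0}\subset\rho[\kappa]$ with $\lambda_0=\lambda_0(\varepsilon,\mu,\kappa_0)$ sufficiently large, a direct quadratic calculation yields the quantitative lower bound
\[
|\lambda+\mu|\xi|^2-i\kappa\xi_1|\geq c\,(|\lambda|+|\xi|^2)\qquad\text{uniformly in } |\kappa|\leq\kappa_0,\ \xi\in\BR^3.
\]

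\textbf{Whole-space $\sR$-bounds.} Apply the Fourier transform to \eqref{eq:2.1} in $\BR^3$; projecting with the Helmholtz projector yields the explicit velocity and pressure symbols
\[
m_\kappa^{\mathrm{vel}}(\lambda,\xi)=\frac{1}{\lambda+\mu|\xi|^2-i\kappa\xi_1}\Bigl(I-\frac{\xi\otimes\xi}{|\xi|^2}\Bigr),\qquad m^{\mathrm{pr}}(\xi)=-\frac{i\xi}{|\xi|^2}.
\]
The families $(\lambda\pd_\lambda)^\ell(\lambda^{j/2}(i\xi)^\alpha m_\kappa^{\mathrm{vel}})$, $|\alpha|\leq 2-j$, and $(\lambda\pd_\lambda)^\ell(i\xi\,m^{\mathrm{pr}})$ satisfy uniform Mihlin-type bounds on $\Sigma_{\varepsilon,\lambda_0}$, independently of $|\kappa|\leq\kappa_0$. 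Weis's operator-valued Fourier multiplier theorem then delivers the asserted $\sR$-bounds for the whole-space resolvent.

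\textbf{Exterior domain via parametrix.} On the bounded smooth domain $\Omega\cap B_{3b}$ the convective term $\kappa\pd_1$ is a lower-order perturbation of the Stokes resolvent, and the corresponding $\sR$-bounded solution operator is available from the fixed-domain theory used in \cite{YS99}, upon which Theorem~\ref{thm:stat} already relies. Choose nested cut-offs $\varphi_0\prec\varphi_1\in C_c^\infty(\BR^3)$ with $\varphi_0\equiv 1$ near $\Gamma$ and $\mathrm{supp}\,\varphi_1\subset B_{3b}$, and form the ansatz $\bw=\varphi_1\bw_B+(1-\varphi_0)\bw_\infty$ corrected by a Bogovski\u\i{}-type lift to restore incompressibility. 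The commutators $[\mu\Delta+\kappa\pd_1,\varphi_i]$ generate error terms that involve only $\bw$ and $\nabla\bw$ without any $\lambda$-weight; multiplication by $\lambda^{j/2}$ renders these errors $\sR$-bounded with norm of order $|\lambda|^{-1/2}$, so that a Neumann series on $\Sigma_{\varepsilon,\lambda_0}$ with $\lambda_0$ large inverts the parametrix and preserves $\sR$-boundedness (stable under composition with fixed bounded operators and under geometric sums of $\sR$-norm $<1$). Holomorphy of $\sS_\kappa$ and $\sP_\kappa$ is inherited from the analytic $\lambda$-dependence of each building block, and uniqueness on $\Sigma_{\varepsilon,\lambda_0}$ follows from~\eqref{est:nslin.res} applied to the difference of two solutions.

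\textbf{Main obstacle.} The delicate point is the uniformity of $r_0$ and $\lambda_0$ with respect to $\kappa$ for $|\kappa|\leq\kappa_0$. The whole-space piece is clean because $|\kappa\xi_1|\leq\mu|\xi|^2/2+\kappa_0^2/(2\mu)$, so the $\kappa$-dependence of the symbol estimates collapses into the choice of $\lambda_0$. In the parametrix, however, the commutator $[\kappa\pd_1,\varphi_i]$ is a zeroth-order operator whose $\sR$-norm scales with $|\kappa|$; to absorb it via the $|\lambda|^{-1/2}$ gain from the Neumann series one must take $\lambda_0$ proportional to $1+\kappa_0^2$. A careful bookkeeping of this $\kappa$-dependence, analogous to the one carried out in~\cite{EiterKyedShibata_PeriodicLp} for the Stokes case $\kappa=0$, yields the required constant $r_0$ depending only on $\Omega$, $q$, $\varepsilon$ and $\kappa_0$.
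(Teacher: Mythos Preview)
Your outline is a correct sketch of the parametrix construction that ultimately underlies the cited literature, but the paper's own argument takes a much shorter route. Rather than rebuild the Oseen resolvent from the symbol $\lambda+\mu|\xi|^2-i\kappa\xi_1$, the paper simply (i)~cites \cite{KS} for the existence of $\sS_\kappa(\lambda)$, $\sP_\kappa(\lambda)$ and the resolvent estimate~\eqref{est:nslin.res} on all of $\rho[\kappa]$, (ii)~cites \cite{Shibata14,Shibata16,EiterKyedShibata_PeriodicLp} for the full $\sR$-boundedness statement in the Stokes case $\kappa=0$, and then (iii)~observes that $\kappa\pd_1$ is a first-order perturbation of the Stokes resolvent, uniformly bounded for $|\kappa|\leq\kappa_0$, so that enlarging $\lambda_0$ transfers the $\sR$-bounds from $\kappa=0$ to general $\kappa$ by a Neumann series in the \emph{Stokes} solution operator. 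In other words, the paper perturbs off the already-established Stokes theory, whereas you redo the whole-space multiplier analysis with the Oseen symbol and then glue. Your route is more self-contained and makes the $\kappa$-uniformity explicit at the symbol level; the paper's route is far more economical because the hard work has already been done in the references.

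There is one genuine gap in your proposal. Your parametrix-plus-Neumann-series argument only produces solvability and estimates on $\Sigma_{\varepsilon,\lambda_0}$ with $\lambda_0$ large, since the error terms are absorbed through the $|\lambda|^{-1/2}$ gain. The theorem, however, asserts unique solvability and the estimate~\eqref{est:nslin.res} for \emph{every} $\lambda\in\rho[\kappa]$, including $\lambda$ close to the boundary of $\rho[\kappa]$ where no smallness is available. Covering this region requires a separate argument---typically a Fredholm/compactness analysis combined with the uniqueness of the Dirichlet problem, as carried out in~\cite{KS}---which your outline does not supply. A minor point: the $\sR$-bounded solvers on bounded domains that you need do not come from~\cite{YS99} (which predates the $\sR$-boundedness framework) but from~\cite{Shibata14,Shibata16}.
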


\begin{proof}
For $\kappa=0$, the result follows mainly from 
\cite[Theorem 1.6]{Shibata14} 
and \cite[Theorem 9.1.4]{Shibata16}
as was shown in \cite[Theorem 4.2]{EiterKyedShibata_PeriodicLp}.
If $\kappa\neq0$,
the existence of the solution operators
$\sS_\kappa(\lambda)$ 
and $\sP_\kappa(\lambda)$ 
together with the estimate~\eqref{est:nslin.res}
for all $\lambda\in\Sigma_{\varepsilon}\setminus\{0\}$
was derived in~\cite[Theorem 4.4]{KS}.
Since the term $\kappa\pd_1$ can be regarded as a perturbation of
the Stokes operator, which is uniform for $|\kappa|\leq \kappa_0$,
the asserted results on the analyticity and the $\sR$-bounds follow
from those for $\kappa=0$
if $\lambda_0$ is taken sufficiently large.
\end{proof}

To show existence of solutions to~\eqref{eq:osc}, 
we combine 
the $\sR$-bounds from Theorem~\ref{thm:res}
with the following multiplier theorem.

\begin{thm}\label{thm:multiplier}
Let $X$ and $Y$ be UMD spaces, and let
$
M \in \rL_\infty(\BR, \sL(X, Y))
\cap C^{1}(\BR, \sL(X, Y))
$
satisfy 
\begin{equation}\label{est:Rbounds.multiplier}
\sR_{\sL(X, Y)}\bigl\{ M(t) \mid t\in\BR\setminus\{0\}\bigr\} \leq r_0,
\qquad
\sR_{\sL(X, Y)}\bigl\{t M'(t) \mid t\in\BR\setminus\{0\}\bigr\}\leq r_0,
\end{equation}
for some $r_0>0$.
Then $M|_\BZ$ is an $\rL_p(\BT)$-multiplier such that
\begin{equation}\label{est:multipliernorm}
\forall f\in C^\infty(\BT;X):
\qquad
\|\sF^{-1}_\BT[M|_\BZ\,\sF_\BT[f]]\|_{\rL_p(\BT; Y)} 
\leq C_p r_0
\|f\|_{\rL_p(\BT; X)}
\end{equation} 
for some constant $C_p>0$ only depending on $p$.
\end{thm}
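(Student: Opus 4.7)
The plan is to combine an operator-valued Mikhlin-type Fourier multiplier theorem on the real line with a standard transference principle from $\BR$ to $\BT$. First I would apply the operator-valued Fourier multiplier theorem of Weis: since $X$ and $Y$ are UMD spaces, $M \in C^1(\BR\setminus\{0\},\sL(X,Y))$, and both $\{M(t)\}$ and $\{tM'(t)\}$ are $\sR$-bounded by $r_0$---which is exactly the hypothesis \eqref{est:Rbounds.multiplier}---Weis's theorem yields that the operator
\[
T_M f := \sF_\BR^{-1}\bigl[M\,\sF_\BR[f]\bigr]
\]
extends to a bounded map from $\rL_p(\BR;X)$ to $\rL_p(\BR;Y)$ with norm at most $C_p r_0$ for every $p \in (1,\infty)$.

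The second step is to transfer this continuous multiplier bound to the discrete setting. Since $M$ is continuous on $\BR \setminus \{0\}$, the restriction $M|_\BZ$ is well-defined, with the value at $k=0$ being irrelevant on the purely oscillatory subspace (one may set it to zero). A de Leeuw-type transference theorem then asserts that a continuous bounded Fourier multiplier $\rL_p(\BR;X) \to \rL_p(\BR;Y)$ yields, via restriction of its symbol to $\BZ$, a bounded multiplier $\rL_p(\BT;X) \to \rL_p(\BT;Y)$ with norm no larger than the continuous one. The implementation follows the usual scheme: for a trigonometric polynomial $f$ on $\BT$, construct approximants on $\BR$ by multiplying $f$ by a scaled smooth cut-off, apply $T_M$, and pass to the limit using a summability kernel together with Plancherel's identity on $\BT$.

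The main obstacle will be making the transference rigorous in the operator-valued setting, since the classical de Leeuw argument is stated for scalar symbols, and one must verify that the dilation-periodization-limiting procedure carries over to $\sL(X,Y)$-valued symbols acting on Bochner spaces; this is by now well documented, and the argument for the present setup has been carried out along the lines of \cite{EiterKyedShibata_PeriodicLp}. As an alternative one could bypass transference altogether by invoking an Arendt--Bu-type discrete operator-valued multiplier theorem on $\BT$: the $\sR$-boundedness hypothesis for $\{M(t)\}_{t\in\BR\setminus\{0\}}$ trivially restricts to $\{M(k)\}_{k\in\BZ\setminus\{0\}}$, while applying the mean value theorem to $M\in C^1$ together with the $\sR$-bound on $\{tM'(t)\}$ gives $\sR$-boundedness of the discrete differences $\{k(M(k+1)-M(k))\}_{k\in\BZ\setminus\{0\}}$, which is precisely the hypothesis for the discrete multiplier bound to yield \eqref{est:multipliernorm} directly.
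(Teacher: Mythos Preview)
Your proposal is correct and follows essentially the same route as the paper: the paper's proof consists of a single sentence citing the combination of Weis's operator-valued multiplier theorem \cite[Theorem 3.4]{Weis2001} with the operator-valued transference principle \cite[Prop.~5.7.1]{HytonenVNeervenVeraarWeis2016}, which is exactly your two-step plan. Your additional remark about the Arendt--Bu alternative is a valid secondary route not mentioned in the paper, but your primary argument aligns with the paper's.
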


\begin{proof}
The result was derived in \cite[Corollary 2.3]{EiterKyedShibata_PeriodicLp}
as a combination
of an operator-valued transference principle for multipliers 
(see \cite[Prop.5.7.1]{HytonenVNeervenVeraarWeis2016})
with the multiplier theorem due to Weis \cite[Theorem 3.4]{Weis2001}.
\end{proof}

For a proof of Theorem~\ref{thm:res} 
we now follow the approach from~\cite{EiterKyedShibata_PeriodicLp}.

\begin{proof}[Proof of Theorem~\ref{thm:res}]
Let $\varphi\in C^\infty(\BR)$ with 
$\varphi(\sigma)=1$ for $\sigma\geq \lambda_0+1/2$ and 
$\varphi(\sigma)=0$ for $\sigma \leq \lambda_0+1/4$.
Set 
$\bff_h = \sF^{-1}_\BT[\varphi(|\tfrac{2\pi}{\CT}k|)\sF_\BT[\bff_\perp](k)]$
and 
$$\bv_h = \sF^{-1}_\BT\big[\sS_\kappa(i\tfrac{2\pi}{\CT}k)\varphi(|\tfrac{2\pi}{\CT}k|)\sF_\BT[\bff_\perp](k)\big],
\qquad \fp_h = \sF^{-1}_\BT\big[\sP_\kappa(i\tfrac{2\pi}{\CT}k)\varphi(|\tfrac{2\pi}{\CT}k|)\sF_\BT[\bff_\perp](k)\big],
$$
where $\lambda_0$, $\sS_\kappa$ and $\sP_\kappa$ are given in Theorem~\ref{thm:res}.
Then $\bv_h$ and $\fp_h$ satisfy the equations
$$\pd_t\bv_h - \mu\Delta\bv_h - \kappa\partial_1\bv_h + \nabla\fp_h 
= \bff_h,
\quad \dv\bv_h = 0\quad
\text{in $\Omega\times\BT$}, \quad 
\bv_h|_{\Gamma\times\BT}  = 0.
$$
Moreover, from the $\sR$-bounds from Theorem~\ref{thm:res} we derive
\[
\begin{aligned}
\sR_{\sL(\rL_q(\Omega)^3, \rH^{2-j}_q(\Omega)^3)}
\big(\big\{(\lambda\pd_\lambda)^\ell(\lambda^{j/2}\varphi(|\lambda|)\sS_\kappa(\lambda)) \mid \lambda \in \Sigma_{\varepsilon, \lambda_0}\big\}\big)
&\leq C_\varphi r_{0}, \\
\sR_{\sL(\rL_q(\Omega)^3, \rL_q(\Omega)^3)}
\big(\big\{(\lambda\pd_\lambda)^\ell(\varphi(|\lambda|)\nabla\sP_\kappa(\lambda)) \mid \lambda \in \Sigma_{\varepsilon, \lambda_0}\big\}\big)
&\leq C_\varphi r_{0}
\end{aligned}
\]
for $\ell=0,1$, $j=0,1,2$,
where $C_\varphi$ is a constant only depending on $\varphi$.
Using Theorem~\ref{thm:multiplier},
we conclude
\begin{equation}\label{est:highfr}
\|\pd_t\bv_h\|_{\rL_p(\BT, \rL_q(\Omega))} 
+ \|\bv_h\|_{\rL_p(\BT, \rH^2_q(\Omega))}
+ \|\nabla\fp_h\|_{\rL_p(\BT, \rL_q(\Omega))}
\leq C\|\bff_h\|_{\rL_p(\BT, \rL_q(\Omega))}
\leq C\|\bff_\perp\|_{\rL_p(\BT, \rL_q(\Omega))}.
\end{equation}
We now set 
\[
\begin{aligned}
\bv_\bot(t)&= \bv_h(t) + \sum_{0<|k| \leq \lambda_0}\e^{i\frac{2\pi}{\CT}kt}
\sS_\kappa(i\tfrac{2\pi}{\CT}k)\sF_\BT[\bff_\perp](k), \\
\fp_\bot(t)&= \fp_h(t) + \sum_{0<|k| \leq \lambda_0}\e^{i\frac{2\pi}{\CT}kt}
\sP_\kappa(i\tfrac{2\pi}{\CT}k)\sF_\BT[\bff_\perp](k).
\end{aligned}
\]
Then,  $\bv_\bot$ and $\fp_\bot$ satisfy 
\eqref{eq:osc}, 
and from \eqref{est:nslin.res} and \eqref{est:highfr}
we conclude estimate \eqref{est:osc}.

For the uniqueness statement,
consider the difference 
$(\bu_\bot,\fq_\bot)=(\bv_\bot-\tilde\bv_\bot,\fp_\bot-\tilde\fp_\bot)$,
which is a solution to~\eqref{eq:osc} with $\bff_\perp=0$.
Then, for each $k \in \BZ\setminus\{0\}$, the functions 
$\hat\bu_k = \sF_\BT[\bu_\perp](k)$ and $\hat\fq_k = \sF_\BT[\fq_\perp](k)$
satisfy $\hat\bu_k \in \rH^2_q(\Omega)^3+\rH^2_s(\Omega)^3$ and $\hat\fq_k\in \hat \rH^1_q(\Omega)+\hat \rH^1_s(\Omega)$ and solve the homogeneous equations
$$
i\tfrac{2\pi}{\CT}k\hat\bu_k - \mu\Delta\hat\bu_k
-\kappa\partial_1\hat\bu_k
+ \nabla\hat\fq_k
= 0,
\quad \dv\hat\bu_k = 0\quad
\text{in $\Omega$}, 
\qquad 
\hat\bu_k|_\Gamma  = 0.
$$
Using elliptic regularity for the Stokes operator
and Sobolev embeddings,
similarly as in the proof of Theorem~\ref{thm:stat.Lq}
we conclude
$\hat\bu_k=\nabla \hat\fq_k=0$
for any $k\in\BZ$.
This shows $\bv=\nabla \fp = 0$
and completes the proof.
\end{proof}

\subsection{Pointwise decay of the oscillatory part}\label{subsec:osc.decay}

In this section,
we study decay properties of solutions $(\bv_\perp,\fp_\perp)$
to the purely oscillatory problem~\eqref{eq:osc}.
We derive decay properties of  
$\|\bv_\perp(x, \cdot)\|_{\rL_p(\BT)}$
and $\|\nabla\bv_\perp(x, \cdot)\|_{\rL_p(\BT)}$
as $|x|\to\infty$
as stated in the following theorem. 

\begin{thm}\label{thm:osc.decay} 
Let $1<p<\infty$, $3<q<\infty$, $\ell\in(0,3]$ and $\kappa_0\geq 0$.
Let $\bff_\perp= \dv \bF_\perp+ \bg_\perp$ with
\begin{equation}\label{assump:4.1}\begin{aligned}
&\int_\BT\bF_\perp(x, t)\,\dd t=0,
&\quad &\!\!\!<\bF_\perp>_{p, \ell} + <\dv\bF_\perp>_{p, \ell+1}< \infty,  \\
&\int_\BT\bg_\perp(x, t)\,\dd t=0, &\quad 
&\bg_\perp \in \rL_p(\BT, \rL_{q, 3b}(\Omega)).
\end{aligned}\end{equation}
Let $(\bv_\bot,\fp_\bot)$ be the solution to 
\eqref{eq:osc} according to Theorem~\ref{thm:osc}.
Then,  $\bv_\perp$ satisfies
\begin{equation}\label{eq:4.2}\begin{aligned}
<\bv_\perp>_{p, \ell}  + 
<\nabla\bv_\perp>_{p, \ell+1} 
\leq C(<\dv\bF_\perp>_{p, \ell+1} 
+ <\bF_\perp>_{p, \ell}  + \|\bg_\perp\|_{\rL_p(\BT, \rL_q(\Omega))})
\end{aligned}\end{equation}
with some constant $C > 0$
only dependent on $\Omega$, $\CT$, $\mu$, $p$, $q$, $\ell$ and $\kappa_0$. 
\end{thm}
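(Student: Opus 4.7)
The plan is to reduce the exterior problem to a whole-space problem via a divergence-preserving cut-off, and then exploit pointwise bounds on the purely oscillatory Oseen fundamental solution. The benefit of working with zero-time-mean data is that this fundamental solution decays \emph{isotropically}, without the wake structure of the stationary Oseen kernel, which keeps all constants uniform in $\kappa$ for $|\kappa|\le\kappa_0$.

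First, I would fix a cut-off $\chi\in C^\infty_c(\BR^3)$ with $\chi\equiv 1$ on $B_{2b}$ and $\mathrm{supp}\,\chi\subset B_{3b}$, and set
\[
\tilde\bv_\perp := (1-\chi)\bv_\perp + \CB[(\nabla\chi)\cdot\bv_\perp], \qquad \tilde\fp_\perp := (1-\chi)\fp_\perp,
\]
where $\CB$ denotes a Bogovskii operator on the bounded Lipschitz annulus $B_{3b}\setminus B_{2b}$, applied pointwise in $t$. Then $\tilde\bv_\perp$ extended by zero is divergence-free, purely oscillatory, supported in $\{|x|\ge 2b\}$, and solves on $\BR^3\times\BT$ a system of the form
\[
\pd_t\tilde\bv_\perp - \mu\Delta\tilde\bv_\perp - \kappa\pd_1\tilde\bv_\perp + \nabla\tilde\fp_\perp = \dv\big((1-\chi)\bF_\perp\big) + \bR,
\]
where the remainder $\bR$ collects the commutator terms with $\chi$, the Bogovskii corrections, and $(1-\chi)\bg_\perp$; it is supported in $\overline{B_{3b}}$, has zero time mean, and its $\rL_p(\BT,\rL_q(\BR^3))$ norm is controlled by the right-hand side of \eqref{eq:4.2} via Theorem~\ref{thm:osc}.

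Second, applying the Helmholtz projection to eliminate the pressure, I would represent $\tilde\bv_\perp$ as a space-time convolution
\[
\tilde\bv_\perp(x,t) = \big(\nabla\Gamma_\perp^\kappa * ((1-\chi)\bF_\perp)\big)(x,t) + \big(\Gamma_\perp^\kappa * \bR\big)(x,t),
\]
against the time-periodic Oseen fundamental solution $\Gamma_\perp^\kappa$ introduced by Eiter and Kyed, where the divergence has been integrated by parts onto $\Gamma_\perp^\kappa$. The crucial input is the uniform pointwise bound
\[
\|\nabla^j\Gamma_\perp^\kappa(x,\cdot)\|_{\rL_p(\BT)} \le C(1+|x|)^{-3-j} \qquad\text{for } j=0,1 \text{ and } |\kappa|\le\kappa_0,
\]
which one derives from a Fourier decomposition in time, the resolvent bounds of Theorem~\ref{thm:res}, and the known pointwise decay of the Oseen resolvent kernel. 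Standard weighted convolution estimates, exploiting the hypothesis $\ell\in(0,3]$ through integrals of the form $\int_{\BR^3}(1+|x-y|)^{-3-j}(1+|y|)^{-\ell}\dd y \le C(1+|x|)^{-\ell-j}$, then yield the desired decay of $\bv_\perp$ and $\nabla\bv_\perp$. For the near-field piece $\chi\bv_\perp$ the conclusion is immediate from the Sobolev embedding $\rH^2_q(\Omega\cap B_{3b})\hookrightarrow C^1$ combined with~\eqref{est:osc}.

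The main obstacle is establishing the $\rL_p(\BT)$ pointwise bounds on $\Gamma_\perp^\kappa$ and its gradient with constants independent of $\kappa$ for $|\kappa|\le\kappa_0$. This requires carefully summing the Oseen resolvent kernels $\sS_\kappa(i\tfrac{2\pi}{\CT}k)$ over $k\in\BZ\setminus\{0\}$, using that the non-vanishing frequencies $i\tfrac{2\pi}{\CT}k$ provide a parabolic regularization that is absent at $\lambda=0$ and dominates the drift term $\kappa\pd_1$; this is precisely what kills the wake and yields the isotropic $|x|^{-3}$ decay of $\Gamma_\perp^\kappa$. A secondary technicality is to arrange the Bogovskii correction so that it preserves the zero time mean pointwise in $x$, which is automatic because $\bv_\perp$ has zero mean and $\CB$ is a linear spatial operator commuting with time integration.
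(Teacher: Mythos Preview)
Your approach is essentially the same as the paper's: a divergence-preserving cut-off via Bogovski\u\i{} reduces to the whole space, where the solution is represented as a convolution with the purely oscillatory fundamental solution $\Gamma_\perp^\kappa$, and the pointwise decay is read off from the kernel estimates; the near-field bound comes from Sobolev embedding and \eqref{est:osc}. The paper invokes Theorem~\ref{lem:4.1} directly for the kernel bounds (including the uniformity in $|\kappa|\le\kappa_0$, which is re-derived there), rather than summing resolvent kernels by hand, but this is the same content.

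One technical point deserves correction. Your stated bound $\|\nabla^j\Gamma_\perp^\kappa(x,\cdot)\|_{\rL_p(\BT)}\le C(1+|x|)^{-3-j}$ cannot hold globally: $\Gamma_\perp^\kappa$ is singular at the spatial origin, and the pointwise estimate \eqref{est:fundsol.pw} only holds for $|x|\ge\delta>0$. Consequently, the single weighted-convolution inequality you quote is not the actual mechanism. The paper handles this by a four-region decomposition of the convolution integral: for $|y|\le 1$ one uses the integrability $\nabla\Gamma_\perp^\kappa\in\rL_{r_1}(B_1\times\BT)$ for $r_1\in[1,4/3)$ from \eqref{est:fundsol.int} together with Young's inequality in time; for $|y|\ge 1$ the pointwise bound $|y|^{-4}$ (which \emph{is} integrable at infinity, avoiding any logarithmic loss) suffices. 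For $\nabla\bv_\perp$ the paper does not pass to $\nabla^2\Gamma_\perp^\kappa$ but instead writes $\nabla\bv_1=\nabla\Gamma_\perp^\kappa*\dv\bG$ and uses the weight $<\dv\bF_\perp>_{p,\ell+1}$ on the data side; this is why the hypothesis on $\dv\bF_\perp$ is needed and not merely $<\bF_\perp>_{p,\ell}$.
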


\begin{remark}\label{rem:osc.decay}
Since $3 < q < \infty$, we have
$\|\dv\bF_\perp\|_{\rL_p(\BT, \rL_q(\Omega))} \leq C_{q,\ell} <\dv\bF_\perp>_{p, \ell+1}$,
so that $\bff_\perp \in \rL_p(\BT, \rL_q(\Omega))$
and
\[
\|\bff_\perp\|_{\rL_p(\BT, \rL_q(\Omega))}
\leq C \big(
<\dv\bF_\perp>_{p, \ell+1} 
+ <\bF_\perp>_{p, \ell}  + \|\bg_\perp\|_{\rL_p(\BT, \rL_q(\Omega))}
\big).
\]
Therefore, existence of $(\bv_\perp,\fp_\perp)$ 
indeed follows from Theorem~\ref{thm:osc}.
\end{remark}

This pointwise estimate will be concluded 
by using the velocity fundamental solution to 
the purely oscillatory problem
\eqref{eq:osc},
which is a tensor field $\Gamma_\perp^\kappa$ 
such that $\bv_\perp:=\Gamma_\perp^\kappa\ast \bH_\perp$ defines a solution to \eqref{eq:osc} 
for $\Omega=\BR^3$.
We use the following properties of $\Gamma_\perp^\kappa$, which were mainly 
derived in~\cite{EK1} in the general multidimensional case.

\begin{thm}\label{lem:4.1} 
Let $\kappa\in\BR$ and $\mu,\CT>0$.
Let
\begin{equation}\label{kernel}
\Gamma_\perp^\kappa = \sF^{-1}_{\BR^3\times\BT}\Bigl[\frac{1-\delta_\BZ(k)}{\mu|\xi|^2
- i\kappa \xi_1 + i\tfrac{2\pi}{\CT}k}\Bigr({\rm I} - \frac{\xi\otimes\xi}{|\xi|^2}\Bigr)\Bigr].
\end{equation}
Then, it holds $\Gamma_\perp^\kappa \in \rL_q(\BR^3\times\BT)^{3\times 3}$ for $q \in (1, 5/3)$,
and $\pd_j\Gamma_\perp^\kappa  \in \rL_r(\BR^3\times\BT)^{3\times 3}$ for $r \in [1, 4/3)$, $j = 1, 2, 3$.
If $\kappa_0>0$ such that $|\kappa|\leq \kappa_0$, then 
\begin{equation}
\|\Gamma_\perp^\kappa\|_{\rL_q(\BR^3\times\BT)}
+\|\nabla\Gamma_\perp^\kappa\|_{\rL_r(\BR^3\times\BT)}
\leq C
\label{est:fundsol.int}
\end{equation}
for some constant $C>0$ 
only dependent on $\CT$, $\mu$, $q$, $r$ and $\kappa_0$.
Moreover, for any $\alpha \in \BN_0^3$, $\delta>0$,  $r \in [1, \infty)$
and $\theta>0$ such that $\CT\kappa^2\leq \theta$,
there exists a constant $C > 0$, 
only dependent on $\mu$, $\alpha$, $\delta$, $r$ and $\theta$, such that
\begin{equation}
\forall |x|\geq\delta:\quad
\|\pd_x^\alpha\Gamma_\perp^\kappa(x, \cdot)\|_{\rL_r(\BT)} 
\leq C|x|^{-(3+|\alpha|)}.
\label{est:fundsol.pw}
\end{equation}
\end{thm}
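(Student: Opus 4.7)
The plan is to reduce the statement to the analysis already carried out for the time-periodic Stokes kernel (the case $\kappa=0$) in~\cite{EK1}, and then to track the $\kappa$-dependence explicitly using that $|\kappa|\le\kappa_0$. The cut-off $1-\delta_\BZ(k)$ removes the only singular mode of the symbol: for $k\neq 0$ the denominator $\mu|\xi|^2-i\kappa\xi_1+i\frac{2\pi}{\CT}k$ has imaginary part $\frac{2\pi}{\CT}k-\kappa\xi_1$, which vanishes only on a hypersurface away from which the real part $\mu|\xi|^2$ dominates; this will be the key lower bound throughout. A natural decomposition is to split
\[
\Gamma_\perp^\kappa = \Gamma_\perp^0 + \big(\Gamma_\perp^\kappa - \Gamma_\perp^0\big),
\]
where the first term is provided by \cite{EK1} and the second will be handled by writing the difference of resolvent factors as a Neumann-type series in $\kappa\xi_1$ that converges (in the appropriate multiplier class) uniformly for $|\kappa|\le\kappa_0$.

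For the global integrability~\eqref{est:fundsol.int}, I would apply an operator-valued transference principle to lift the multiplier on $\BZ\times\BR^3$ to one on $\BR\times\BR^3$, so that the symbol becomes $\chi(\eta)(\mu|\xi|^2-i\kappa\xi_1+i\eta)^{-1}(I-\xi\otimes\xi/|\xi|^2)$ for a suitable smooth truncation $\chi$ cutting out $\eta=0$. Parabolic scaling in $(\xi,\eta)$ gives the natural exponents: $\Gamma_\perp^\kappa$ behaves like the heat kernel for $|(x,t)|\to 0$ (so in $\rL_q$ near the origin one needs $2q<5$, i.e.\ $q<5/3$), while the pointwise decay $|x|^{-3}$ at infinity suffices as soon as $3q>3$. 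The analogous scaling for $\nabla\Gamma_\perp^\kappa$ yields $r<4/3$. The $\kappa$-uniformity in~\eqref{est:fundsol.int} would be obtained by checking that the Mikhlin-type derivative bounds on the symbol depend only on $\kappa_0$; since the extra term $-i\kappa\xi_1$ contributes at most $\kappa_0|\xi|$ and is absorbed by $\mu|\xi|^2$ at high frequency, this reduces to a finite-frequency perturbation argument.

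For the pointwise estimate~\eqref{est:fundsol.pw}, I would use a Littlewood--Paley type decomposition in $\xi$ adapted to the diagonal $\mu|\xi|^2\sim|\frac{2\pi}{\CT}k|$. On the high-frequency piece ($\mu|\xi|^2\gg|k|$), the symbol is smooth and bounded by $C|\xi|^{-2}$, so repeated integration by parts in $\xi$ (noting that each derivative of the symbol gains a factor $|\xi|^{-1}$) produces arbitrarily rapid decay in $|x|$, absorbed by $|x|^{-(3+|\alpha|)}$. On the low-frequency piece ($\mu|\xi|^2\lesssim|k|$) the inverse resolvent factor is $\sim |k|^{-1}$ and one then sums in $k$: here the condition $\CT\kappa^2\le\theta$ enters, since it controls the shift $\kappa\xi_1$ relative to the imaginary part $\frac{2\pi}{\CT}k$, ensuring that the $k$-sum of the $L_r(\BT)$-norms converges uniformly in $\kappa$. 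The estimate $|x|^{-(3+|\alpha|)}$ is expected to be the best that this ``bilinear'' integration-by-parts argument can extract at the borderline of the oscillatory regime.

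The main obstacle is keeping every constant uniform in $\kappa$ for $|\kappa|\le\kappa_0$ (and, in the pointwise estimate, under the scaling condition $\CT\kappa^2\le\theta$). Once this uniformity is in place the integrability statements reduce, via transference, to the Stokes case of \cite{EK1}, and the pointwise statement reduces to a careful stationary-phase/Littlewood--Paley analysis of the oscillatory multiplier in~\eqref{kernel}.
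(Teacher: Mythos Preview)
Your decomposition $\Gamma_\perp^\kappa = \Gamma_\perp^0 + (\Gamma_\perp^\kappa-\Gamma_\perp^0)$ via a Neumann series in $\kappa\xi_1$ has a real gap: the series does not converge for arbitrary $\kappa_0$. Writing
\[
\frac{1}{\mu|\xi|^2 - i\kappa\xi_1 + i\tfrac{2\pi}{\CT}k}
=\frac{1}{\mu|\xi|^2 + i\tfrac{2\pi}{\CT}k}\sum_{n\ge 0}\Bigl(\frac{i\kappa\xi_1}{\mu|\xi|^2+i\tfrac{2\pi}{\CT}k}\Bigr)^{\!n},
\]
the ratio attains values of order $|\kappa|\sqrt{\CT/(4\pi\mu)}$ (maximize $s/\sqrt{\mu^2 s^4+(2\pi/\CT)^2}$ over $s=|\xi|$), so for large $\kappa_0$ or large $\CT$ the geometric series diverges. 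The paper avoids this entirely: it does \emph{not} perturb off $\kappa=0$, but instead revisits the proof in \cite{EK1} and tracks the $\kappa$-dependence directly.

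For \eqref{est:fundsol.int} your plan ``Mikhlin bounds uniform in $\kappa_0$ plus parabolic scaling'' is a heuristic for the exponents, but Mikhlin only yields $L_q\to L_q$ boundedness of the multiplier operator, not that the kernel itself lies in $L_q(\BR^3\times\BT)$. The paper uses a concrete factorization: the Leray projector is pulled out as Riesz transforms, and the scalar part is written as a multiplier
\[
M_{\kappa,\CT}(k,\xi)=\frac{(1-\delta_\BZ(k))\,|\tfrac{2\pi}{\CT}k|^{2/5}(1+|\xi|^2)^{3/5}}{\mu|\xi|^2-i\kappa\xi_1+i\tfrac{2\pi}{\CT}k}
\]
acting on a fixed function $\Psi(x,t)=\psi(x)\chi(t)$ with $\sF_{\BR^3}[\psi]=(1+|\xi|^2)^{-3/5}$ and $\sF_\BT[\chi](k)=(1-\delta_\BZ(k))|\tfrac{2\pi}{\CT}k|^{-2/5}$. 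One checks $\Psi\in L_q(\BR^3\times\BT)$ for $q\in(1,5/3)$ (this is where the exponent enters) and that $M_{\kappa,\CT}$ is an $L_q$-multiplier with norm depending only on $\kappa_0$; since Riesz transforms are bounded on $L_q$, the claim follows. The point is that the exponent range comes from the explicit integrability of $\Psi$, not from an abstract scaling argument.

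For \eqref{est:fundsol.pw} your Littlewood--Paley/integration-by-parts scheme is genuinely different from the paper's route and could in principle be made to work; you correctly identify that $\CT\kappa^2\le\theta$ is exactly what keeps $|\kappa\xi_1|$ subordinate to $|\tfrac{2\pi}{\CT}k|$ on the low-frequency shell $\mu|\xi|^2\lesssim|k|$. The paper instead exploits the semi-explicit spatial inverse transform from \cite{EK1}, which involves $\sqrt{-\mu(\kappa,k)}$ with $\mu(\kappa,k)=(\kappa/2)^2+i\tfrac{2\pi}{\CT}k$, and the key uniform bound is the elementary but sharp inequality
\[
\mathrm{Im}\sqrt{-\mu(\kappa,k)}-\tfrac{|\kappa|}{2}\ge C_\theta\,\bigl|\tfrac{2\pi}{\CT}k\bigr|^{1/2}
\qquad(k\neq 0),
\]
established by writing the left side as $|\tfrac{2\pi}{\CT}k|^{1/2}\Phi\bigl(\tfrac{|\kappa|/2}{|2\pi k/\CT|^{1/2}}\bigr)$ for an explicit positive function $\Phi$ with $\Phi(0^+)=1/\sqrt{2}$; the argument is bounded by $\sqrt{\theta}/(2\sqrt{2\pi})$ precisely under $\CT\kappa^2\le\theta$. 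This gives uniform exponential-type decay mode by mode and the $|x|^{-3-|\alpha|}$ bound after summation. Your approach would require carrying the oscillatory-integral estimates through with explicit $\kappa$-uniform constants, which is more laborious than the square-root computation.
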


\begin{proof}
The majority of the result was proved in \cite{EK1}. 
However, it was not shown that the respective estimates are uniform
in $\kappa$
if $\kappa$ is small.
To show this property, we reconsider those parts of the proof in \cite{EK1},
where this assumption has an effect.

To study integrability properties of 
$\Gamma_\perp^\kappa$,
the components of $\Gamma_\perp^\kappa$ were expressed as
\[
(\Gamma_\perp^\kappa)_{j\ell}
=\big[
\delta_{j\ell}\mathfrak R_m \mathfrak R_m - \mathfrak R_j \mathfrak R_\ell
\big]
\circ
\sF_{\BR^3\times\BT}^{-1}\big[
M_{\kappa,\CT} \sF_{\BR^3\times\BT}[\Psi]
\big],
\]
where $\mathfrak R_j$ denotes a Riesz transform,
which is a continuous operator on $\rL_{q}(\BR^3\times\BT)$.
Moreover,
$M_{\kappa,\CT}$ is given by
\[
M_{\kappa,\CT}(k,\xi)
:=\frac{(1-\delta_\BZ(k))\,|\frac{2\pi}{\CT}k|^{\frac{2}{5}}(1+|\xi|^2)^{\frac{3}{5}}}{\mu|\xi|^2+i\kappa\xi_1+i\frac{2\pi}{\CT}k},
\]
and 
$\Psi\colon\BR^3\times\BT\to\BR$ is given as the product $\Psi(x,t)=\psi(x)\chi(t)$
with
$\sF_{\BR^3}[\psi](\xi)=(1+|\xi|^2)^{-\frac{3}{5}}$
and $\sF_{\BT}[\chi](k)=(1-\delta_\BZ(k))|\frac{2\pi}{\CT}k|^{-\frac{2}{5}}$.
Then $M_{\kappa,\CT}$ was shown to be an $\rL^{q}$-multiplier 
in $\BR^3\times\BT$
such that
\[
\|\Gamma_\perp^\kappa\|_{\rL_q(\BR^3\times\BT)}
\leq C \|\Psi\|_{\rL_q(\BR^3\times\BT)}.
\]
Going through the proof in~\cite{EK1}, one readily verifies that
the multiplier norm of $M_{\kappa,\CT}$
and thus the constant $C>0$ in this estimate can be chosen uniformly in $\kappa$
if $|\kappa|\leq\kappa_0$.
It was also shown that $\Psi\in\rL^q(\BR^3\times\BT)$ 
for all $q\in(1,5/3)$,
the norm of which is clearly independent of $\kappa$.
This leads to the asserted estimate for $\Gamma_\perp^\kappa$,
and arguing in the same way
for $\nabla\Gamma_\perp^\kappa$,
we obtain the uniform estimate~\eqref{est:fundsol.int}.

To derive the pointwise estimate~\eqref{est:fundsol.pw},
a central term in the proof from~\cite{EK1} given by
\[
\mu(\kappa,k):=
\Big(\frac{\kappa}{2}\Big)^2+i\frac{2\pi}{\CT}k
\]
for $k\in\BZ$,
and its square root $\sqrt{-\mu(\kappa,k)}$,
where $\sqrt{z}$ denotes the square root of $z$ with nonnegative imaginary part.
In particular, we need a 
constant $C_\theta>0$, only depending on $\theta$, such that
\begin{equation}\label{est:impart}
\mathrm{Im} \sqrt{-\mu(\kappa,k)}- \frac{|\kappa|}{2}
\geq C_\theta \big|\frac{2\pi}{\CT}k\big|^\frac{1}{2}
\end{equation}
for all $k\in\BZ\setminus\{0\}$.
Repeating the calculations in \cite[Lemma 3.2]{EK1},
we obtain
\[
\mathrm{Im}\sqrt{-\mu(\kappa,k)}- \frac{|\kappa|}{2}
= \big|\frac{2\pi}{\CT}k\big| \ 
\Phi\Big(\frac{|\kappa|/2}{|\frac{2\pi}{\CT}k|^{1/2}} \Big)
\]
for $\Phi$ given by
\[ 
\Phi(s)
:=s \Big(\frac{1}{\sqrt{2}}\Big(1+\big(1+s^{-4}\big)^\frac{1}{2}\Big)^\frac{1}{2}-1\Big).
\]
Since $\lim_{s\to0} \Phi(s)=\frac{1}{\sqrt{2}}$ and $\Phi(s)>0$ for all $s>0$,
estimate~\eqref{est:impart}
follows with 
\[
C_\theta=\min\big\{\Phi(s)\mid 0<s\leq\frac{\sqrt{\theta}}{2\sqrt{2\pi}}\big\}>0.
\]
Moreover, we have
\[
\big|\frac{2\pi}{\CT}k \big|
\leq 
|\mu(\kappa,k)|
=\big|\frac{2\pi}{\CT}k \big|
\sqrt{1+\Big(\frac{|\kappa|^2/4}{|\frac{2\pi}{\CT}k|} \Big)^2}
\leq \tilde C_\theta\, \big|\frac{2\pi}{\CT}k \big|
\]
with $\tilde C_\theta^2 = 1+(\theta/8\pi)^2$,
so that $\mu$ is comparable with $\big|\frac{2\pi}{\CT}k \big|$
with constants only depending on $\theta$.
Based on these observations, one can now repeat the proof
of the pointwise estimate~\eqref{est:fundsol.pw}
given in~\cite{EK1}
and see that all constants can be chosen uniformly in $\kappa$ and $\CT$ 
as long as $\CT\kappa^2\leq\theta$.
\end{proof}

We can now show the statements of Theorem~\ref{thm:osc.decay}.

\begin{proof}[Proof of Theorem \ref{thm:osc.decay}]
We proceed as in the proof of~\cite[Theorem 5.6]{EiterKyedShibata_PeriodicLp},
where the result was proved for $\kappa=0$.
In order to clarify that the constant $C$ in \eqref{eq:4.2}
can be chosen independently of $\kappa$ for $|\kappa|\leq \kappa_0$,
we repeat the arguments here.

Since we assume $3 < q < \infty$, Sobolev embeddings
and estimate~\eqref{est:osc}
imply
$$\sup_{|x| \leq 4b} \|\bv_\perp(\cdot, x)\|_{\rL_p(\BT)} 
+ \sup_{|x| \leq 4b} \|(\nabla \bv_\perp)(\cdot, x)\|_{\rL_p(\BT)} 
\leq C\|\bv_\perp\|_{\rL_p(\BT, \rH^2_q(\Omega))}
\leq C\|\bff_\perp\|_{\rL_p(\BT, \rL_q(\Omega))}.
$$
In virtue of Remark~\ref{rem:osc.decay},
it thus remains to estimate $\bv_\perp$ for $|x| > 4b$. 
As seen in the proof of Theorem \ref{thm:osc}, 
we have $\bv_\perp = \sF^{-1}_\BT[\sS_\kappa(i\tfrac{2\pi}{\CT}k)\sF_\BT[\bff_\perp](k)]$ and $\fp_\perp
= \sF^{-1}_\BT[\sP_\kappa(i\tfrac{2\pi}{\CT}k)\sF_\BT[\bff_\perp](k)]$,
where $\sS_\kappa$ and $\sP_\kappa$ are the families of solution operators given in Theorem \ref{thm:res}.

We first derive a representation formula of 
$\sS_\kappa(i\tfrac{2\pi}{\CT}k)$ for $k \in \BZ\setminus\{0\}$ and $|x| > 4b$.
Notice that 
$\sS_\kappa(i\tfrac{2\pi}{\CT}k) \in \sL(\rL_q(\Omega)^3, \rH^2_q(\Omega)^3)$ and 
$\sP_\kappa(i\tfrac{2\pi}{\CT}k) \in \sL(\rL_q(\Omega)^3, \hat \rH^1_q(\Omega))$ 
satisfy
the estimate
\begin{equation}\label{est:6.4}
 \|\sS_\kappa(i\tfrac{2\pi}{\CT}k)\bff\|_{\rH^2_q(\Omega)} 
 + \|\nabla \sP_\kappa(i\tfrac{2\pi}{\CT}k) \bff
\|_{\rL_q(\Omega)} \leq C\|\bff\|_{\rL_q(\Omega)}
\end{equation}
for $\bff\in\rL_q(\Omega)^3$,
where 
$C$ depends solely on $\Omega$, $\mu$, $q$ and $\kappa_0$. 
Moreover, the functions $\bu=\sF_\BT[\bv_\perp](k)=\sS_\kappa(i\tfrac{2\pi}{\CT}k)\sF_\BT[\bff_\perp](k)$ and $\fq = 
\sF_\BT[\fp_\perp](k)=\sP_\kappa(i\tfrac{2\pi}{\CT}k)\sF_\BT[\bff_\perp](k)$ satisfy the equations
\begin{equation}\label{eq:perp.1}
i\tfrac{2\pi}{\CT}k\bu - \mu\Delta\bu - \kappa\partial_1\bu + \nabla \fq = \bff_k, 
\quad \dv\bu= 0 \quad\text{in $\Omega$},
\quad \bu|_\Gamma=0,
\end{equation}
where $\bff_k = \sF_\BT[\bff_\perp](k)$.
Let $\varphi$ be a function in $C^\infty_0(\BR^3)$ that equals $1$ for $|x| < 2b$
and $0$ for $|x| > 3b$.  Let  
\begin{equation}\label{proof:4.2}
\bw= (1-\varphi)\sS_\kappa(i\tfrac{2\pi}{\CT}k)\bff_k + \BB[(\nabla\varphi)\cdot
\sS_\kappa(i\tfrac{2\pi}{\CT}k)\bff_k], \quad 
\fr = (1-\varphi)\sP_\kappa(i\tfrac{2\pi}{\CT}k)\bff_k,
\end{equation}
where $\BB$ denotes the Bogovski\u\i{} operator \cite{Bogovskii79,Bogovskii80}.
Then $\bw \in \rH^2_q(\BR^3)^3$ and $\fr \in \hat \rH^1_q(\BR^3)$,
and the functions  $\bw$ and $\fr$ satisfy the equations
$$
i\tfrac{2\pi}{\CT}k\bw - \mu\Delta\bw - \kappa\partial_1\bw + \nabla \fr = 
(1-\varphi)\bff_k + \CR_{1,\kappa}(i\tfrac{2\pi}{\CT}k)\bff_k, \quad \dv \bw = 0
\quad\text{in $\BR^3$},
$$
where we have set 
\[
\begin{aligned}
\CR_{1,\kappa}(\lambda)\bff 
&= 2\mu(\nabla\varphi)\cdot\nabla \sS_\kappa(\lambda)\bff
+(\mu\Delta\varphi+\kappa\partial_1\varphi)\sS_\kappa(\lambda)\bff - (\nabla \varphi)\sP_\kappa(\lambda)\bff
\\
&\qquad
+(\lambda-\mu\Delta-\kappa\partial_1)\BB[(\nabla\varphi)\cdot
\sS_\kappa(\lambda)\bff].
\end{aligned}
\]
By the uniqueness of solutions to the Oseen resolvent problem in $\BR^3$, we have
$\bw = \CT_\kappa(i\tfrac{2\pi}{\CT}k)((1-\varphi)\bff_k + \CR_{1,\kappa}(i\tfrac{2\pi}{\CT}k)\bff_k)$,
where
\begin{equation}\label{eq:2.3}
\CT_\kappa(\lambda)\bff = \sF^{-1}_{\BR^3}\Bigl[\frac{1}
{\mu|\xi|^2 - i\kappa\xi_1 + \lambda}\Bigr({\rm I} - \frac{\xi\otimes\xi}{|\xi|^2}\Bigr)\sF_{\BR^3}[\bff]\Bigr].
\end{equation}
Since $1-\varphi(x) = 1$ 
and $\BB[(\nabla\varphi)\cdot\sS_\kappa(i\tfrac{2\pi}{\CT}k)\bff_k](x,t)=0$ 
for $|x| >4b$, by \eqref{proof:4.2} we thus 
obtain
\[
\sS_\kappa(i\tfrac{2\pi}{\CT}k)\bff_k(x) = \CT_\kappa(i\tfrac{2\pi}{\CT}k)((1-\varphi)\bff_k)(x)
+ \CT_\kappa(i\tfrac{2\pi}{\CT}k)(\CR_{1,\kappa}(i\tfrac{2\pi}{\CT}k)\bff_k)(x)
\]
for $|x| > 4b$ and any $k \in \BZ\setminus\{0\}$.

This representation formula implies
\begin{equation}\label{eq:perp.2}\begin{aligned}
\bv_\perp &= \sF^{-1}_\BT[(1-\delta_\BZ(k))\sS_\kappa(i\tfrac{2\pi}{\CT}k)\sF_\BT[\bff_\perp](k)]
\\
&= \sF^{-1}_\BT[(1-\delta_\BZ(k))\CT_\kappa(i\tfrac{2\pi}{\CT}k)\sF_\BT[
(1-\varphi)\bff_\perp](k))]\\
&\qquad
+ \sF^{-1}_\BT[(1-\delta_\BZ(k))\CT_\kappa(i\tfrac{2\pi}{\CT}k)(\CR_{1,\kappa}(i\tfrac{2\pi}{\CT}k)
\sF_\BT[\bff_\perp](k))]
\end{aligned}\end{equation} 
for $|x| > 4b$. 
Moreover, from Theorem \ref{thm:res} we conclude 
\[
\begin{aligned}
&\sR_{\sL(\rL_q(\Omega)^3, \rH^1_q(\BR^3)^3)}(\{(\lambda\pd_\lambda)^\ell
\CR_{1,\kappa}(\lambda) \mid 
\lambda \in \BR\setminus[-\lambda_0, \lambda_0]\}) \leq r_0
\quad(\ell=0,1), \\
&\| \CR_{1,\kappa}(i\tfrac{2\pi}{\CT}k)\bff_k\|_{\rH^1_q(\BR^3)} \leq r_0\|\bff_k\|_{\rL_q(\Omega)}
\end{aligned}
\]
for any  $k\in\BZ\setminus\{0\}$  
with some constant 
 $r_0$ independent of $\kappa$. 
We can thus define $\CR_{2,\kappa}\bff_\perp$ by setting 
$\CR_{2,\kappa}\bff_\perp= \sF^{-1}_\BT[
(1-\delta_\BZ(k))\CR_{1,\kappa}(i\tfrac{2\pi}{\CT}k)\bff_k]$
and obtain that 
\begin{equation}\label{eq:StokesRes_extdom4}\begin{aligned}
&{\rm supp}\, \CR_{2,\kappa}\bff_\perp \subset D_{2b, 3b} 
: = \{(x, t) \in \BR^3 \times \BR \mid 2b < |x| < 3b\}, \\
&\qquad\|\CR_{2,\kappa}\bff_\perp\|_{\rL_p(\BT, \rL_q(\Omega))} 
\leq C\|\bff_\perp\|_{\rL_p(\BT, \rL_q(\Omega))}
\end{aligned}
\end{equation}
by employing Theorem~\ref{thm:multiplier}
in the same way as in the proof of Theorem \ref{thm:osc}. 
Recalling that $\bff_\perp = \dv\bF_\perp + \bg_\perp$, we set 
 $\bG = (1-\varphi)\bF_\perp$
and $\bh =  
(\nabla\varphi)\bF_\perp + (1-\varphi)\bg_\perp +
\CR_{2,\kappa}\bff_\perp$.  
In virtue of \eqref{kernel}, \eqref{eq:2.3} and \eqref{eq:perp.2}, we then have
\[
\bv_\perp(x,t) = \Gamma_\perp^\kappa*(\dv\bG)(x,t) + \Gamma_\perp^\kappa*\bh(x,t) 
\]
for $t\in\BT$ and $|x| > 4b$.  

We decompose this formula into two parts 
and set $\bv_1 = \Gamma_\perp^\kappa*(\dv\bG)$ and
$\bv_2 = \Gamma_\perp^\kappa*\bh$. 
By the divergence theorem,   we obtain
\begin{align*}
&\bv_1(x,t) = \nabla \Gamma_\perp^\kappa * \bG(x,t)\\
 & = \int_\BT\int_{|y| \leq 1}\nabla\Gamma_\perp^\kappa(y, s)
\bG(x-y, t-s)\,\dd y\dd s + \int_\BT\int_{1 \leq |y| \leq |x|/2}
\nabla\Gamma_\perp^\kappa(y, s)\bG(x-y, t-s)\,\dd y\dd s \\
&+\int_\BT\int_{|x|/2\leq |y| \leq 2|x|} \nabla\Gamma_\perp^\kappa(y,s)
\bG(x-y, t-s)\,\dd y\dd s 
+\int_\BT\int_{ |y| \geq 2|x|} \nabla\Gamma_\perp^\kappa(y,s)
\bG(x-y, t-s)\,\dd y\dd s.
\end{align*}
We set $\gamma_\ell = <\bG>_{p,\ell}$ and consider $p < r_0 < \infty$, $r_1 \in (1, 5/4)$
such that $1 + 1/r_0 = 1/r_1 + 1/p$.
From Young's inequality  and Theorem \ref{lem:4.1}, we thus conclude 
\begin{align*}
\|\bv_1(x, \cdot)\|_{\rL_{r_0}(\BT)} 
&\leq \gamma_\ell\|\nabla\Gamma_\perp^\kappa\|_{\rL_{r_1}(B_1\times \BT)}(1+ |x|)^{-\ell}
+C\gamma_\ell(1+|x|)^{-\ell}\int_{1\leq |y| \leq |x|/2}|y|^{-4}\,\dd y \\
& \qquad+ C\gamma_\ell(|x|/2)^{-4}\int_{|z| \leq 3|x|}(1 + |z|)^{-\ell}\,\dd z 
+ C\gamma_\ell\int_{|y| \geq 2|x|}|y|^{-4-\ell}\,\dd y.
\end{align*} 
Noting that $p \leq r_0$ and $\gamma_\ell \leq \ <\bF_\perp>_{p, \ell}$, we infer 
\begin{align*}\| \bv_1(x, \cdot)\|_{\rL_p(\BT)} &\leq C_b|x|^{-\min\{\ell,4\}}<\bF_\perp>_{p, \ell}
\quad\text{for $|x| \geq 4b$}.
\end{align*}
In the same way we 
decompose $\nabla \bv_1=\nabla\Gamma_\perp^\kappa\ast\dv \bG$
and use
Theorem \ref{lem:4.1} and Young's inequality to obtain 
\begin{align*}
\|\nabla\bv_1(x, \cdot)\|_{\rL_{r_0}(\BT)}
&\leq \gamma_{\ell+1}\|\nabla\Gamma_\ell\|_{\rL_{r_1}(B_1\times\BT)}(1+ |x|)^{-\ell-1}
+C\gamma_{\ell+1}(1+ |x|)^{-\ell-1}\int_{1\leq |y| \leq |x|/2}|y|^{-4}\,\dd y \\
&\qquad + C\gamma_{\ell+1}(|x|/2)^{-4}\int_{|z| \leq 3|x|}(1 + |z|)^{-\ell-1}\,\dd z 
+ C\gamma_{\ell+1}\int_{|y| \geq 2|x|}|y|^{-5-\ell}\,\dd y,
\end{align*}
where 
$\gamma_{\ell+1} = <\dv\bG>_{p, \ell+1}$.
Since we have
$$
\begin{aligned}
<\dv\bG>_{p, \ell+1} 
&\leq\  <\dv \bF_\perp>_{p, \ell+1} + <(\nabla\varphi)
\bF_\perp>_{p, \ell+1}
\\
&\leq\  <\dv \bF>_{p, \ell+1} + \|\nabla\varphi\|_{\rL_\infty(\BR^3)}
3b<\bF>_{p, \ell}
\end{aligned}
$$
and $p \leq r_0$, we thus obtain 
\begin{align*}\|\nabla\bv_1(x, \cdot)\|_{\rL_p(\BT)} &\leq C_b|x|^{-\min\{\ell+1,4\}}
(<\dv\bF_\perp>_{p, \ell+1} + <\bF_\perp>_{p, \ell})
\quad \text{for $|x| \geq 4b$}. 
\end{align*}
Using that $\bh(y,s)$ vanishes for $|y|\geq 3b$,
we obtain have
\[
\nabla^m\bv_2(x,t)
= \int_\BT\int_{|x-y| \leq 3b}\nabla^m\Gamma_\perp^\kappa(y, s)
\bh(x-y, t-s)\,\dd y\dd s
\]
for $m=0,1$.
Since $|x|\geq 4b$ and $|x-y|\leq 3b$ implies $|y|\geq |x|/4\geq b$,
by Theorem \ref{lem:4.1} and Young's inequality, we deduce 
\[
\begin{aligned}
\|\nabla^m\bv_2(x, \cdot)\|_{\rL_{p}(\BT)}
&\leq  \int_{|x-y| \leq 3b}\|\nabla^m\Gamma_\perp^\kappa(y,\cdot)\|_{\rL_p(\BT)}
\|\bh(x-y, \cdot)\|_{\rL_1(\BT)}\,\dd y
\\
&\leq  C_m |x|^{-3-m}\|\bh\|_{\rL_1(B_{3b}\times\BT)}.
\end{aligned}
\]
Noting \eqref{eq:StokesRes_extdom4},
we can estimate the last term as
\[
\begin{aligned}
\|\bh\|_{\rL_1(B_{3b}\times\BT)}
\leq
C\|\bh\|_{\rL_p(\BT,\rL_q(B_{3b}))}
&\leq
C\big(
<\bF_\perp>_{p, \ell} + \|\bg_\perp\|_{\rL_p(\BT, \rL_q(\Omega))}
+ \|\CR_{2,\kappa}\bff_\perp\|_{\rL_p(\BT, \rL_q(\Omega))}\big) \\
&\leq C\big(
<\bF_\perp>_{p, \ell} + \|\bg_\perp\|_{\rL_p(\BT, \rL_q(\Omega))}
+ \|\bff_\perp\|_{\rL_p(\BT, \rL_q(\Omega))}\big).
\end{aligned}
\]
For $|x|\geq 4b$ we now conclude
\[
\|\nabla^m\bv_2(x, \cdot)\|_{\rL_{p}(\BT)}
\leq C|x|^{-3-m}\big(
<\bF_\perp>_{p, \ell} + \|\bg_\perp\|_{\rL_p(\BT, \rL_q(\Omega))}
+ <\dv\bF_\perp>_{p, \ell+1}\big)
\]
in virtue of Remark~\ref{rem:osc.decay}.
Since $\bv(x,t)=\bv_1(x,t)+\bv_2(x,t)$ for $|x|\geq 4b$, we conclude~\eqref{eq:4.2}.
\end{proof}

\subsection{The full time-periodic problem}

To conclude the 
existence result for time-periodic solutions to the Oseen problem
as stated in Theorem~\ref{thm:tpOseen},
we combine  
Theorem \ref{thm:stat} and Theorem \ref{thm:osc.decay}
with a lifting argument for inhomogeneous boundary conditions.

\begin{proof}[Proof of Theorem \ref{thm:tpOseen}]
We first reduce the problem to the case of homogeneous boundary conditions.
For this purpose, let 
$$\bv_1 \in \rH^1_p(\BT, \rL_q(\Omega)^3) \cap 
\rL_p(\BT, \rH^2_q(\Omega)^3), \quad
\fp_1 \in \rL_p(\BT, \hat\rH^1_q(\Omega)),
$$
be a solution to the time-periodic Stokes problem
\[
\pd_t\bv_1 - \mu\Delta\bv_1 + \nabla\fp_1 
= 0,
\quad \dv\bv_1 = 0\quad 
\text{in $\Omega\times\BT$}, \quad 
\bv_1|_{\Gamma\times\BT}  = \bh|_{\Gamma\times\BT},
\]
which exists due to Theorem~\ref{thm:tpStokes}.
Let $\varphi\in C^\infty_0(\Omega)$ be
such that $\varphi\equiv 1$ in $B_{2b}$ and $\varphi\equiv 0$ in $\BR^3\setminus B_{3b}$. 
Let $D_{2b, 3b} = \{x \in \BR^3 \mid 2b < |x| < 3b\}$ and 
$$
\rH^2_{q, 0, a}(D_{2b, 3b}) =\big\{f \in \rH^2_q(D_{2b, 3b}) \mid \pd_x^\alpha f|_{S_L}=0
\enskip \text{for $L=2b$, $3b$ and $|\alpha|\leq 1$}, \enskip 
\int_{D_{2b, 3b}} f(x)\,\dd x = 0\big\}.
$$
According to \cite[Lemma 5]{Shibata18}, we know that 
$(\nabla\varphi)\cdot\bv_1(t) \in \rH^2_{q, 0, a}(D_{2b, 3b})$ for a.a.~$t\in\BR$,
and setting
$\tilde\bv = \varphi \bv_1 - \BB[(\nabla\varphi)\cdot\bv_1]$, we see that
\begin{equation}
\begin{aligned}
&\tilde\bv \in \rH^1_p(\BT, \rL_q(\Omega)^3) \cap \rL_p(\BT, \rH^2_q(\Omega)^3), 
\quad {\rm supp}\,\tilde\bv \subset B_{3b} \cap \overline\Omega, 
\quad \dv\tilde\bv=0,
\quad \tilde\bv|_\Gamma = \bh,\\
&\|\pd_t\tilde\bv\|_{\rL_p(\BT, \rL_q(\Omega))}
+ \|\tilde\bv\|_{\rL_p(\BT, \rH^2_q(\Omega))}
\leq C(\|\pd_t\bh\|_{\rL_p(\BT, \rL_q(\Omega))} + \|\bh\|_{\rL_p(\BT, \rH^2_q(\Omega))}).
\end{aligned}
\label{est:lift}
\end{equation}
Now let $(\bw_S,\fq_S)$ and $(\bw_\bot,\fq_\bot)$ be the unique solutions to
the equations
\begin{align}
- \Delta\bw_S - \kappa\partial_1\bw_S + \nabla \fq_S 
&=\bff_S+\kappa\partial_1\tilde\bv_S,
& \dv \bw_S &= 0
\quad\text{in
$\Omega$}, &
\bw_S|_\Gamma&=0,
\label{eq:oseen.split1}
\\
\pd_t\bw_\bot - \Delta\bw_\bot - \kappa\partial_1\bw_\bot + \nabla \fq_\bot 
&=\bff_\bot+\kappa\partial_1\tilde\bv_\bot,
& \dv \bw_\bot &= 0
\quad\text{in
$\Omega\times\BT$}, &
\bw_\bot|_{\Gamma\times\BT}&=0,
\label{eq:oseen.split2}
\end{align}
which exist due to Theorem~\ref{thm:stat} and Theorem~\ref{thm:osc}.
Also invoking Theorem~\ref{thm:osc.decay},
we see that 
$(\bv,\fp)=(\tilde\bv+\bw_S+\bw_\bot,\fq_S+\fq_\bot)$
is a solution to the original problem \eqref{eq:nslin.tp}
and belongs to the asserted function class since 
$\tilde\bv$ vanishes in $\BR^3\setminus B_{3b}$.
Estimate~\eqref{est:tpOseen}
follows
by combining the estimates from~\eqref{est:statOseen},
\eqref{est:osc}, \eqref{eq:4.2} and~\eqref{est:lift}.

The uniqueness statement is a direct consequence 
of decomposing a solution $(\bv,\fp)$
into a stationary and an oscillatory part by means of \eqref{eq:decomposition}
and using the respective statements from 
Theorem \ref{thm:stat} and Theorem \ref{thm:osc}.
\end{proof}

\begin{proof}[Proof of Theorem~\ref{thm:tpOseen.int}]
We proceed as in the proof of Theorem~\ref{thm:tpOseen}
and first reduce the problem to the case of homogeneous boundary conditions.
We construct the function $\tilde\bv$ as before,
and let $(\bw_S,\fq_S)$ and $(\bw_\bot,\fq_\bot)$
be the solutions to~\eqref{eq:oseen.split1} and~\eqref{eq:oseen.split2},
which exist due to
Theorem~\ref{thm:stat.Lq}
and Theorem~\ref{thm:osc}.
Then $(\bv,\fp)=(\tilde\bv+\bw_S+\bw_\bot,\fq_S+\fq_\bot)$
is a solution to~\eqref{eq:nslin.tp}
and the estimates~\eqref{est:tpOseen.int} and~\eqref{est:tpOseen.int2}
follow from~\eqref{est:statOseen.Lq},~\eqref{est:osc} and~\eqref{est:lift}.
As before, the uniqueness statement follows from
the respective statements of Theorem~\ref{thm:stat.Lq} and Theorem~\ref{thm:osc}.
\end{proof}

\section{Existence of solutions to the nonlinear problem}
\label{sec:nonlin}

To show existence of time-periodic solutions 
to the nonlinear problem~\eqref{eq:systemref},
we combine the linear theory
from Theorem~\ref{thm:tpStokes}
Theorem~\ref{thm:tpOseen} and Theorem~\ref{thm:tpOseen.int}
with suitable estimates for the linear perturbation term $\CL$ 
and the nonlinear term $\CN$
given in \eqref{6.6}.
Then 
Banach's fixed-point theorem
will finally lead 
to the proofs of Theorem~\ref{mainthm:Stokes}, Theorem~\ref{mainthm:Oseen}
and Theorem~\ref{mainthm:Oseen.int}.
More precisely, we consider the set
\begin{equation}
\CI_{\kappa,\rho} = \big\{(\bv, \fq) \mid 
\bv\in \rL_{1,\mathrm{loc}}(\Omega\times\BT)^3,  
\quad  
 \fq \in  \rL_{1,\mathrm{loc}}(\Omega\times\BT), \quad
\|(\bv, \fq)\|_{\CI_\kappa} \leq \rho\big\}
\label{eq:fpset}
\end{equation}
for a suitable norm $\|\cdot\|_{\CI_{\kappa}}$
that is defined in the respective proofs
and is suggested by the associated linear theory.
For given $(\bv, \fq) \in \CI_{\kappa, \rho}$,
we then consider the solution $(\bu, \fp)$ to the linear system
\begin{equation}
\pd_t\bu - \mu\Delta\bu - \kappa\pd_1\bu + \nabla\fp = \bff + \CL(\bv, \fq) + \CN(\bv),
\quad\dv\bu=0
\quad\text{in $\Omega\times\BT$}, \qquad \bu|_{\Gamma\times\BT}=\bh|_{\Gamma\times\BT},
\label{eq:fpsystem}
\end{equation}
where $\CL$ and $\CN$ are defined in~\eqref{6.6}.
We show that in the respective settings and 
for a suitable choice of $\rho$,
this leads to a well-defined solution mapping
$\Xi_\kappa\colon\CI_{\kappa,\rho}\to\CI_{\kappa,\rho}$,
$(\bv,\fq)\mapsto(\bu,\fp)$,
which is contractive. 
Therefore, Banach's fixed-point theorem yields
the existence of an element $(\bw,\fq)$
such that 
$\Xi_\kappa(\bw,\fq)=(\bw,\fq)$,
that is,
$(\bw,\fq)$ is a solution to the nonlinear problem \eqref{eq:systemref}.

To derive suitable estimates,
we write $\CL(\bv, \fq)$ and $\CN(\bv)$ as 
\begin{equation}\label{repr:el1}\begin{aligned}
\CL(\bv, \fq) &= \CL_1\pd_t\bv + \sum_{|\alpha| \leq 2}
\CL_{2, \alpha}D^\alpha\bv + \sum_{|\alpha|=1}\CL_{3, \alpha}D^\alpha \fq,\\
\CN(\bv) & = \CN^1(\bv) + \CN^2(\bv) = 
\bv\cdot\nabla\bv + \sum_{|\alpha| \leq 1} \CL_{4, \alpha}\bv\cdot
D^\alpha\bv.
\end{aligned}\end{equation}
Here, $\CL_1$, $\CL_{2, \alpha}$, $\CL_{3, \alpha}$ and 
$\CL_{4, \alpha}$ correspond
to time-periodic continuous functions on 
$\Omega\times\BT$, which vanish on $\BR^3\setminus B_{2b}\times\BT$ and satisfy the estimate
\begin{equation}\label{repr:el2}
\|(\CL_1, \CL_{2, \alpha}, \CL_{3, \alpha}, \CL_{4, \alpha})
\|_{\rL_\infty(\Omega\times\BT)} \leq C\varepsilon_0
\end{equation}
if $|\kappa|\leq\kappa_0$,
due to~\eqref{5.4} and \eqref{5.4*}.
We define $\Omega_{2b}=\Omega\cap B_{2b}$.
The terms that vanish outside of $\Omega_{2b}\times\BT$,
where $\Omega_{2b}=\Omega\cap B_{2b}$,
can be estimated in the following manner.

\begin{lem}\label{lem:rhs.loc}
Let $2<p<\infty$ and $3<q<\infty$, 
and let
\[
\bv\in \rH^1_p(\BT, \rL_q(\Omega_{2b})^3) \cap 
\rL_p(\BT, \rH^2_q(\Omega_{2b})^3),
\qquad
\nabla\fq\in\rL_p(\BT, \rL_q(\Omega_{2b})^3).
\]
It holds
\begin{align}
&\|\CL(\bv, \fq)\|_{\rL_p(\BT, \rL_q(\Omega))}
\leq C\varepsilon_0\big(\|\pd_t\bv\|_{\rL_p(\BT, \rL_q(\Omega_{2b}))}
+ \|\bv\|_{\rL_p(\BT, \rH^2_q(\Omega_{2b}))} 
+ \|\nabla\fq\|_{\rL_p(\BT, \rL_q(\Omega_{2b}))} \big),
\label{est:rhs.loclin}\\
&\|\CN^2(\bv)\|_{\rL_p(\BT, \rL_q(\Omega))} 
\leq C\varepsilon_0
\big(\|\pd_t\bv\|_{\rL_p(\BT, \rL_q(\Omega_{2b}))}
+ \|\bv\|_{\rL_p(\BT, \rH^2_q(\Omega_{2b}))}\big)^2.
\label{est:rhs.locquad}
\end{align}
\end{lem}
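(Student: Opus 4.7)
The plan is to exploit the explicit representations in~\eqref{repr:el1} together with the uniform bound~\eqref{repr:el2} on the coefficients $\CL_1,\CL_{2,\alpha},\CL_{3,\alpha},\CL_{4,\alpha}$, recalling that they all vanish outside $\Omega_{2b}\times\BT$.

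For~\eqref{est:rhs.loclin}, each term in the expression for $\CL(\bv,\fq)$ is a pointwise product of a coefficient bounded by $C\varepsilon_0$ in $\rL_\infty(\Omega\times\BT)$ and a derivative of $\bv$ of order at most two (or a single time derivative) or a first-order derivative of $\fq$. A direct application of H\"older's inequality, combined with the support property that reduces the relevant spatial norms to $\Omega_{2b}$, yields the bound after summing over the finitely many terms in~\eqref{repr:el1}.

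For~\eqref{est:rhs.locquad}, the same factorization takes care of $\CL_{4,\alpha}$ and reduces the problem to estimating the bilinear expression $\bv\cdot D^\alpha\bv$, with $|\alpha|\leq 1$, in $\rL_p(\BT,\rL_q(\Omega_{2b}))$. Applying H\"older's inequality first in space and then in time with exponents $\infty$ and $p$, one obtains
\[
\|\bv\cdot D^\alpha\bv\|_{\rL_p(\BT,\rL_q(\Omega_{2b}))}
\leq \|\bv\|_{\rL_\infty(\BT\times\Omega_{2b})}\,\|\bv\|_{\rL_p(\BT,\rH^1_q(\Omega_{2b}))}.
\]
The crucial ingredient is then an $\rL_\infty$ bound for $\bv$ in terms of the parabolic maximal-regularity norm. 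Here the assumptions $p>2$ and $q>3$ are essential: they guarantee $2-2/p-3/q>0$, which is precisely the condition under which the mixed space $\rH^1_p(\BT,\rL_q(\Omega_{2b}))\cap\rL_p(\BT,\rH^2_q(\Omega_{2b}))$ embeds continuously into $\rL_\infty(\BT\times\Omega_{2b})$; concretely via the trace embedding into $C^0(\BT;B^{2-2/p}_{q,p}(\Omega_{2b}))$ followed by the Sobolev embedding of the interpolation space into $\rL_\infty(\Omega_{2b})$, both dominated by $\|\pd_t\bv\|_{\rL_p(\BT,\rL_q(\Omega_{2b}))}+\|\bv\|_{\rL_p(\BT,\rH^2_q(\Omega_{2b}))}$. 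Inserted above, this produces precisely the square of the parabolic norm on the right-hand side, giving~\eqref{est:rhs.locquad}.

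The main obstacle is verifying the $\rL_\infty$ embedding under the precise restrictions on $p$ and $q$; once this is in hand, the remainder of the argument is routine H\"older bookkeeping together with the compact support of the coefficients.
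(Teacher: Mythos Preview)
Your proposal is correct and follows essentially the same route as the paper: H\"older's inequality combined with~\eqref{repr:el1}--\eqref{repr:el2} for the linear part, and for~\eqref{est:rhs.locquad} the same $\rL_\infty$-times-$\rL_p(\rL_q)$ splitting followed by the embedding chain through $\rB^{2(1-1/p)}_{q,p}(\Omega_{2b})$ under the condition $2/p+3/q<2$. The paper inserts an intermediate Sobolev--Slobodeckij space $\rW^{\sigma+3/q}_q$ in the embedding chain, but this is only a cosmetic difference from your direct use of the Besov trace space.
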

\begin{proof}
Estimate~\eqref{est:rhs.loclin} is a direct consequence of \eqref{repr:el1}
and \eqref{repr:el2}.
For estimate~\eqref{est:rhs.locquad},
we first use H\"older's inequality and \eqref{repr:el2} to obtain
\[
\|\CN^2(\bv)_\perp\|_{\rL_p(\BT, \rL_q(\Omega))} 
\leq C\varepsilon_0\|\bv\|_{\rL_\infty(\BT, \rL_\infty(\Omega_{2b}))}
\|\nabla\bv\|_{\rL_p(\BT, \rL_q(\Omega_{2b}))}
\]
Now choose $\sigma>0$ so small that $\sigma + 3/q < 2(1-1/p)$,
which is possible due to $2/p + 3/q < 2$.
Using Sobolev's inequality and real interpolation,  
we then have
\begin{equation}\label{est:interpolation}\begin{aligned}
\|\bw\|_{\rL_\infty(\BT, \rL_\infty(D))} 
&\leq C\|\bw\|_{\rL_\infty(\BT, \rW^{\sigma + 3/q}_q(D))} 
\leq C\|\bw\|_{\rL_\infty(\BT, \rB^{2(1-1/p)}_{q,p}(D))} \\
&\leq C(\|\pd_t\bw\|_{\rL_p(\BT, \rL_q(D))}
+ \|\bw\|_{\rL_p(\BT, \rH^2_q(D))})
\end{aligned}\end{equation}
for any Lipschitz domain $D\subset\BR^3$.
Using this estimate with $\bw=\bv$ and $D=\Omega_R$ leads to estimate~\eqref{est:rhs.locquad}.
\end{proof}

We shall use these estimates of the local terms in all proofs below.
However, for estimates of the term $\CN^1(\bv)$,
which also gives a contribution far away from the boundary,
the spatial decay of solutions has to be
taken into account.
Observe that $\dv\bv=0$ implies
$\CN^1(\bv)=\dv\tilde\CN^1(\bv)$ with $\tilde\CN^1(\bv) = \bv\otimes\bv$.
Then 
\begin{equation}\label{nonlinear:1}\begin{aligned}
\CN^1(\bv)_S &= \bv_S\cdot\nabla\bv_S + \int_\BT
\bv_\perp\cdot\nabla\bv_\perp\,\dd t\\
\tilde\CN^1(\bv)_S &= \bv_S\otimes\bv_S + \int_\BT
\bv_\perp\otimes\bv_\perp\,\dd t; \\
\CN^1(\bv)_\perp& = \bv_S\cdot\nabla\bv_\perp + \bv_\perp\cdot\nabla\bv_S
+ \bv_\perp\cdot\nabla\bv_\perp - \int_\BT
\bv_\perp\cdot\nabla\bv_\perp\,\dd t \\
\tilde\CN^1(\bv)_\perp &= \bv_S\otimes\bv_\perp + \bv_\perp\otimes\bv_S
+ \bv_\perp\otimes\bv_\perp - \int_\BT
\bv_\perp\otimes\bv_\perp\,\dd t.
\end{aligned}\end{equation}
and $\dv\tilde\CN(\bv)_S = \CN^1(\bv)_S$ and $\dv\tilde\CN(\bv)_\perp
=\CN^1(\bv)_\perp$. 
Corresponding estimates that suit to the linear theory
are derived in the following three lemmas.

\begin{lem}\label{lem:nonlinest.Stokes}
Let $2<p<\infty$ and $3<q<\infty$, 
and let
\[
\bv\in \rH^1_p(\BT, \rL_q(\Omega)^3) \cap 
\rL_p(\BT, \rH^2_q(\Omega)^3),
\qquad
<\bv>_{p,1} + <\nabla\bv>_{p, 2} 
<\infty.
\]
Then
\[
\begin{aligned}
<\CN^1(\bv)_S>_3
&\leq C(<\bv_S>_1<\nabla\bv_S>_2 
+ <\bv_\perp>_{p, 1}<\nabla\bv_\perp>_{p, 2}),
\\
<\tilde\CN^1(\bv)_S>_2
&\leq C(<\bv_S>_1^2
+ <\bv_\perp>_{p, 1}^2),
\\
<\CN^1(\bv)_\perp>_{p, 2} &\leq C\big( <\bv_\perp>_{p, 1}
<\nabla\bv_S>_2
\\
&\qquad\quad
+ (\|\pd_t\bv\|_{\rL_p(\BT, \rL_q(\Omega))}
+ \|\bv\|_{\rL_p(\BT, \rH^2_q(\Omega))}
+<\bv>_{p, 1})
<\nabla\bv_\perp>_{p, 2}\big),
\\
<\tilde\CN^1(\bv)_\perp>_{p, 1} & \leq C\big(
\|\pd_t\bv\|_{\rL_p(\BT, \rL_q(\Omega))}
+ \|\bv\|_{\rL_p(\BT, \rH^2_q(\Omega))}
+<\bv>_{p, 1}\big)
<\bv_\perp>_{p, 1}.
\end{aligned}
\]
\end{lem}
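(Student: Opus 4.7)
The strategy is to reduce each of the four inequalities to a pointwise bilinear bound combined with a single application of H\"older's inequality on the compact torus $\BT$, exploiting that $\BT$ carries a probability measure and that $p>2$ yields the continuous embedding $\rL_p(\BT)\hookrightarrow\rL_{p'}(\BT)$ with $p'=p/(p-1)$. For the steady-state estimates I would treat the two summands of $\CN^1(\bv)_S$ and $\tilde\CN^1(\bv)_S$ in~\eqref{nonlinear:1} separately: the purely stationary contributions $\bv_S\cdot\nabla\bv_S$ and $\bv_S\otimes\bv_S$ are controlled directly by the definition of the pointwise norms $<\cdot>_\alpha$, while the time-integrated quadratic terms are estimated via H\"older in time together with the above embedding, so that both factors collapse to $\rL_p(\BT)$-norms and produce the products $<\bv_\perp>_{p,1}<\nabla\bv_\perp>_{p,2}$ and $<\bv_\perp>_{p,1}^2$.

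For the oscillatory estimates I would process the four terms in~\eqref{nonlinear:1} one at a time. The mixed product $\bv_\perp\cdot\nabla\bv_S$ (and its companion $\bv_\perp\otimes\bv_S$) factorises pointwise and immediately yields $<\bv_\perp>_{p,1}<\nabla\bv_S>_2$. For $\bv_S\cdot\nabla\bv_\perp$ and $\bv_S\otimes\bv_\perp$ I would first derive the pointwise bound $|\bv_S(x)|\leq\|\bv(x,\cdot)\|_{\rL_p(\BT)}\leq\, <\bv>_{p,1}(1+|x|)^{-1}$ from $\bv_S(x)=\int_\BT\bv(x,t)\,\dd t$ and Jensen's inequality on $\BT$; this explains the $<\bv>_{p,1}$ factor appearing on the right-hand side.

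The genuinely quadratic contributions $\bv_\perp\cdot\nabla\bv_\perp$ and $\bv_\perp\otimes\bv_\perp$ (together with their subtracted time means, which only introduce a harmless factor $2$) are the hardest, since no stationary factor is available. For these I would bound one factor in $\rL_\infty(\BT\times\Omega)$ by invoking the parabolic embedding
\[
\rH^1_p(\BT,\rL_q(\Omega))\cap\rL_p(\BT,\rH^2_q(\Omega))
\hookrightarrow C(\BT,\rB^{2(1-1/p)}_{q,p}(\Omega))
\hookrightarrow \rL_\infty(\BT\times\Omega),
\]
valid under $2/p+3/q<2$---secured by $p>2$ and $q>3$, exactly as in the proof of~\eqref{est:interpolation}---and leave the other factor as $\|\nabla\bv_\perp(x,\cdot)\|_{\rL_p(\BT)}$ or $\|\bv_\perp(x,\cdot)\|_{\rL_p(\BT)}$. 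Combined with $\|\bv_\perp\|_{\rL_\infty(\BT\times\Omega)}\leq \|\bv\|_{\rL_\infty(\BT\times\Omega)}+\|\bv_S\|_{\rL_\infty(\Omega)}$ and the pointwise bound on $\bv_S$ just derived, this yields the composite prefactor $\|\pd_t\bv\|_{\rL_p(\BT,\rL_q(\Omega))}+\|\bv\|_{\rL_p(\BT,\rH^2_q(\Omega))}+<\bv>_{p,1}$. The main technical point is to ensure that this embedding holds with a uniform constant on the unbounded exterior $C^2$-domain $\Omega$ (not just on the bounded $\Omega_{2b}$ used in Lemma~\ref{lem:rhs.loc}); this follows from standard extension arguments. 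Summing the four contributions via the triangle inequality then yields the four stated bounds.
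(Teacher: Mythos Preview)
Your proposal is correct and follows essentially the same route as the paper: H\"older in time (using $p>2$) for the steady-state parts, and H\"older plus the parabolic embedding~\eqref{est:interpolation} on $D=\Omega$ for the oscillatory parts. The only cosmetic difference is that the paper applies~\eqref{est:interpolation} directly to $\bv_\perp$ rather than to $\bv$ with a subsequent split $\|\bv_\perp\|_{\rL_\infty}\leq\|\bv\|_{\rL_\infty}+\|\bv_S\|_{\rL_\infty}$, and it does not pause to justify the embedding on the unbounded exterior domain; your remark that this follows from standard extension arguments is accurate.
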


\begin{proof}
The estimates of $\CN(\bv)_S$ and $\tilde\CN(\bv)_S$
follow directly
from using H\"older's inequality for the time integrals since $p>2$.
For the estimates of $\CN(\bv)_\perp$ and $\tilde\CN(\bv)_\perp$,
H\"older's inequality leads to
\[
\begin{aligned}
<\CN^1(\bv)_\perp>_{p, 2} &\leq C(<\bv_S>_{1}
<\nabla\bv_\perp>_{p, 2} + <\bv_\perp>_{p, 1}
<\nabla\bv_S>_2\\
&\qquad+ \|\bv_\perp\|_{\rL_\infty(\BT, \rL_\infty(\Omega))}<\nabla\bv_\perp>_{p, 2}
+<\bv_\perp>_{p, 1}<\nabla\bv_\perp>_{p, 2}),
\\
<\tilde\CN^1(\bv)_\perp>_{p, 1} & \leq C(<\bv_S>_1
<\bv_\perp>_{p, 1} + \|\bv_\perp\|_{\rL_\infty(\BT, \rL_\infty(\Omega))}
<\bv_\perp>_{p, 1} + <\bv_\perp>^2_{p, 1}).
\end{aligned}
\]
Using now the interpolation inequality~\eqref{est:interpolation}
completes the proof.
\end{proof}

\begin{lem}\label{lem:nonlinest.Oseen}
Let $2<p<\infty$, $3<q<\infty$ and $\delta\in(0,\frac{1}{4})$,
and let
\[
\begin{aligned}
&\bv\in \rH^1_p(\BT, \rL_q(\Omega)^3) \cap 
\rL_p(\BT, \rH^2_q(\Omega)^3),
\\
&<\bv_S>^w_{1, \delta} + <\nabla \bv_S>^w_{3/2, 1/2+\delta}
+ <\bv_\perp>_{p,1+\delta} + <\nabla\bv_\perp>_{p, 2+\delta}
<\infty.
\end{aligned}
\]
Then
\[
\begin{aligned}
&<\CN^1(\bv)_S>^w_{5/2,1/2+2\delta} 
\leq C\big(<\bv_S>^w_{1,\delta}<\nabla\bv_S>^w_{3/2, 1/2+\delta}
+ <\bv_\perp>_{p, 1+\delta}<\nabla\bv_\perp>_{p, 2+\delta}\big), 
\\
&<\CN^1(\bv)_\perp>_{p, 2+\delta} 
\leq C \big( <\bv_\perp>_{p, 1+\delta}
<\nabla\bv_S>^w_{3/2, 1/2+\delta} 
\\
&\qquad\qquad
+(\|\pd_t\bv\|_{\rL_p(\BT, \rL_q(\Omega))}
+ \|\bv\|_{\rL_p(\BT, \rH^2_q(\Omega))}
+<\bv_S>^w_{1,\delta}+<\bv_\perp>_{p, 1+\delta})
<\nabla\bv_\perp>_{p, 2+\delta}\big ),
\\
&<\tilde\CN^1(\bv)_\perp>_{p, 1+\delta} 
\\
&\qquad\qquad
 \leq C(\|\pd_t\bv\|_{\rL_p(\BT, \rL_q(\Omega))}
+ \|\bv\|_{\rL_p(\BT, \rH^2_q(\Omega))}
+<\bv_S>^w_{1,\delta}+<\bv_\perp>_{p, 1+\delta})
<\bv_\perp>_{p, 1+\delta} .
\end{aligned}
\]
\end{lem}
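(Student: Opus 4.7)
The plan is to estimate each summand arising from the decomposition formulas \eqref{nonlinear:1} pointwise in $x$, combining Hölder's inequality in space with Hölder's inequality in $t$ on the normalized torus $\BT$, and then matching the resulting weights against the norms on the right-hand side. Two auxiliary facts will be used throughout: the trivial comparison $1 \leq (1+|x|-x_1) \leq 2(1+|x|)$, which lets us trade an anisotropic factor for an isotropic one (or drop it entirely), and the interpolation inequality \eqref{est:interpolation}, which supplies the bound $\|\bv_\perp\|_{\rL_\infty(\BT\times\Omega)} \leq C(\|\pd_t\bv\|_{\rL_p(\BT,\rL_q)} + \|\bv\|_{\rL_p(\BT,\rH^2_q)})$, together with the analogous bound for $\bv_S$. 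Finally, since the Haar measure on $\BT$ is normalized and $p>2$, we have $\|f\|_{\rL_r(\BT)} \leq \|f\|_{\rL_s(\BT)}$ for $r \leq s$; in particular, Cauchy--Schwarz in $t$ gives $\int_\BT |fg|\,\dd t \leq \|f\|_{\rL_p(\BT)} \|g\|_{\rL_p(\BT)}$.

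For the first estimate, split $\CN^1(\bv)_S = \bv_S\cdot\nabla\bv_S + \int_\BT \bv_\perp\cdot\nabla\bv_\perp\,\dd t$. The steady--steady contribution factors cleanly as
\[
(1+|x|)^{5/2}(1+|x|-x_1)^{1/2+2\delta} |\bv_S||\nabla\bv_S|
= \big[(1{+}|x|)(1{+}|x|{-}x_1)^\delta |\bv_S|\big]\,\big[(1{+}|x|)^{3/2}(1{+}|x|{-}x_1)^{1/2+\delta}|\nabla\bv_S|\big],
\]
yielding $<\bv_S>^w_{1,\delta}\,<\nabla\bv_S>^w_{3/2,1/2+\delta}$. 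For the oscillatory--oscillatory contribution, after applying Cauchy--Schwarz in $t$, I would use $(1+|x|-x_1)^{1/2+2\delta} \leq C(1+|x|)^{1/2+2\delta}$ to reduce to the isotropic weight $(1+|x|)^{3+2\delta} = (1+|x|)^{1+\delta}(1+|x|)^{2+\delta}$ and absorb the two factors into $<\bv_\perp>_{p,1+\delta}\,<\nabla\bv_\perp>_{p,2+\delta}$.

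For the bound on $<\CN^1(\bv)_\perp>_{p,2+\delta}$, there are four pieces. For $\bv_\perp\cdot\nabla\bv_S$, pair $<\bv_\perp>_{p,1+\delta}$ with $<\nabla\bv_S>^w_{3/2,1/2+\delta}$: the leftover factor $(1+|x|)^{-1/2-\delta}(1+|x|-x_1)^{-1/2-\delta}$ is bounded by $1$. For $\bv_S\cdot\nabla\bv_\perp$, pair $<\bv_S>^w_{1,\delta}$ with $<\nabla\bv_\perp>_{p,2+\delta}$: the leftover factor $(1+|x|)^{\delta}(1+|x|-x_1)^{-\delta}$ is bounded using $(1+|x|-x_1)\geq 1$ and absorbing $(1+|x|)^\delta$ against the extra decay $(1+|x|)^{-1-\delta}$ coming from the unused weight. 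For the remaining $\bv_\perp\cdot\nabla\bv_\perp$ and its mean-subtraction, apply \eqref{est:interpolation} to pull $\|\bv_\perp\|_{\rL_\infty}$ outside, leaving $<\nabla\bv_\perp>_{p,2+\delta}$ (the subtracted mean has even better decay since two copies of $<\bv_\perp>_{p,1+\delta}\,<\nabla\bv_\perp>_{p,2+\delta}$ only contribute after dividing by $(1+|x|)^{1+\delta}$). The bound on $<\tilde\CN^1(\bv)_\perp>_{p,1+\delta}$ proceeds analogously: $\bv_S\otimes\bv_\perp$ uses $<\bv_S>^w_{1,\delta}$ paired with $<\bv_\perp>_{p,1+\delta}$ and a bounded leftover, while the quadratic oscillatory term uses \eqref{est:interpolation} to bound one copy of $\bv_\perp$ in $\rL_\infty$.

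The only subtlety I anticipate is the careful bookkeeping of the anisotropic exponents in the mixed steady--oscillatory products, where the anisotropic weight $(1+|x|-x_1)^{\beta}$ appears only on the steady side but the oscillatory norm is purely isotropic; the inequalities $1 \leq (1+|x|-x_1) \leq 2(1+|x|)$ resolve this systematically, and no further ingredient beyond Hölder and \eqref{est:interpolation} is required.
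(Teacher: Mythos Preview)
Your proposal is correct and follows essentially the same approach as the paper: H\"older's inequality in time (using $p>2$) for the steady-state part, pointwise H\"older for the mixed steady--oscillatory terms in the $\perp$-parts, and the interpolation inequality \eqref{est:interpolation} to control $\|\bv_\perp\|_{\rL_\infty(\BT,\rL_\infty(\Omega))}$ in the quadratic oscillatory pieces. Your weight bookkeeping is more explicit than the paper's (which just writes ``H\"older's inequality''); note that for $\bv_S\cdot\nabla\bv_\perp$ the leftover factor is actually $(1+|x|)^{-1}(1+|x|-x_1)^{-\delta}\leq 1$, so no further absorption is needed, and the ``analogous bound for $\bv_S$'' via \eqref{est:interpolation} is never invoked.
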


\begin{proof}
As in the previous proof, 
the estimate of $\CN^1(\bv)_S$ follows directly from H\"older's inequality.
Moreover, we obtain
\[
\begin{aligned}
<\CN^1(\bv)_\perp>_{p, 2+\delta} 
&\leq C(<\bv_S>^w_{1,\delta}
<\nabla\bv_\perp>_{p, 2+\delta} + <\bv_\perp>_{p, 1+\delta}
<\nabla\bv_S>^w_{3/2, 1/2+\delta} 
\\
&\qquad\quad
+ \|\bv_\perp\|_{\rL_\infty(\BT, \rL_\infty(\Omega))}<\nabla\bv_\perp>_{p, 2+\delta}
+<\bv_\perp>_{p, 1+\delta}<\nabla\bv_\perp>_{p, 2+\delta}), \\
<\tilde\CN^1(\bv)_\perp>_{p, 1+\delta} & \leq C(<\bv_S>^w_{1,\delta}
<\bv_\perp>_{p, 1+\delta} + \|\bv_\perp\|_{\rL_\infty(\BT, \rL_\infty(\Omega))}
<\bv_\perp>_{p, 1+\delta} + <\bv_\perp>^2_{p, 1+\delta}).
\end{aligned}
\]
The asserted estimates now result from the 
interpolation inequality~\eqref{est:interpolation}.
\end{proof}

\begin{lem}\label{lem:nonlinest.Oseen.int}
Let $2<p<\infty$, $3<q<\infty$ and $1<s\leq 4/3$,
and let $\bv=\bv_S+\bv_\perp$ be such that
\[
\begin{aligned}
&\bv_S\in\rL_{2s/(2-s)}(\Omega), \quad 
\nabla\bv_S\in\rL_{4s/(4-s)}(\Omega), \quad
\nabla^2\bv_S\in\rL_s(\Omega)\cap\rL_q(\Omega),
\\
&\bv_\perp\in \rH^1_p(\BT, \rL_q(\Omega)^3\cap\rL_s(\Omega)^3) \cap 
\rL_p(\BT, \rH^2_q(\Omega)^3\cap\rH^2_s(\Omega)^3).
\end{aligned}
\]
Then
\[
\begin{aligned}
\|\CN^1(\bv)_S\|_{\rL_s(\Omega)} 
&\leq C\big(
\|\bv_S\|_{\rL_{2s/(2-s)}(\Omega)}^{1-\theta}\|\nabla^2\bv_S\|_{\rL_q(\Omega)}^\theta
\|\nabla\bv_S\|_{\rL_{4s/(4-s)}(\Omega)}
\\
&\qquad\quad
+ (\|\pd_t\bv_\perp\|_{\rL_p(\BT, \rL_q(\Omega))}+ \|\bv_\perp\|_{\rL_p(\BT, \rH^2_q(\Omega))})
\|\bv_\perp\|_{\rL_p(\BT, \rH^1_s(\Omega))}
\big), 
\\
\|\CN^1(\bv)_S\|_{\rL_q(\Omega)}
&\leq C \big(
(\|\nabla^2\bv_S\|_{\rL_{s}(\Omega)}+\|\nabla^2\bv_S\|_{\rL_q(\Omega)})^2
\\
&\qquad\quad
+ (\|\pd_t\bv_\perp\|_{\rL_p(\BT, \rL_q(\Omega))}+ \|\bv_\perp\|_{\rL_p(\BT, \rH^2_q(\Omega))})^2
\big),
\\
\|\CN^1(\bv)_\perp\|_{\rL_p(\BT;\rL_s(\Omega))}
&\leq C \big(
\|\nabla^2\bv_S\|_{\rL_{s}(\Omega)}
+\|\pd_t\bv_\perp\|_{\rL_p(\BT, \rL_q(\Omega))}+ \|\bv_\perp\|_{\rL_p(\BT, \rH^2_q(\Omega))}
\big)
\\
&\qquad\quad
\times\big(
\|\bv_\perp\|_{\rL_p(\BT, \rH^1_s(\Omega))}+\|\bv_\perp\|_{\rL_p(\BT, \rH^1_q(\Omega))}
\big),
\\
\|\CN^1(\bv)_\perp\|_{\rL_p(\BT;\rL_q(\Omega))}
&\leq C \big(
\|\nabla^2\bv_S\|_{\rL_{s}(\Omega)}+\|\nabla^2\bv_S\|_{\rL_q(\Omega)}
+\|\pd_t\bv_\perp\|_{\rL_p(\BT, \rL_q(\Omega))}+ \|\bv_\perp\|_{\rL_p(\BT, \rH^2_q(\Omega))}
\big)
\\
&\qquad\quad
\times\|\bv_\perp\|_{\rL_p(\BT, \rH^1_q(\Omega))},
\end{aligned}
\]
where $\theta=(1/s-3/4)/(1/s+1/6-1/q)$.
\end{lem}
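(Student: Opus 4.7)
The plan is to prove each of the four estimates by splitting $\CN^1(\bv)$ according to~\eqref{nonlinear:1} into a steady and a purely oscillatory contribution, and then applying H\"older's inequality in space (and, for oscillatory terms, also in time) combined with Sobolev embeddings and Gagliardo--Nirenberg interpolation to reconcile the resulting norms with those assumed in the hypotheses.

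First I would treat the $\rL_s(\Omega)$ bound on $\CN^1(\bv)_S$. The steady contribution $\bv_S\cdot\nabla\bv_S$ is estimated by H\"older as $\|\bv_S\|_{\rL_4(\Omega)}\|\nabla\bv_S\|_{\rL_{4s/(4-s)}(\Omega)}$, and $\|\bv_S\|_{\rL_4(\Omega)}$ is interpolated by Gagliardo--Nirenberg in $\BR^3$ between $\bv_S\in\rL_{2s/(2-s)}(\Omega)$ and $\nabla^2\bv_S\in\rL_q(\Omega)$; a direct scaling computation yields precisely the exponent $\theta=(1/s-3/4)/(1/s+1/6-1/q)$ claimed, and a short check shows $\theta\in(0,1)$ under the standing assumptions $1<s<4/3$ and $3<q<\infty$. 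For the oscillatory contribution $\int_\BT\bv_\perp\cdot\nabla\bv_\perp\,\dd t$, H\"older in time (using $p>2$, so that $\rL_p(\BT)\hookrightarrow\rL_{p/(p-1)}(\BT)$) followed by H\"older in space places one factor in $\rL_\infty(\BT;\rL_\infty(\Omega))$, which I then bound by the parabolic norm $\|\pd_t\bv_\perp\|_{\rL_p(\BT,\rL_q(\Omega))}+\|\bv_\perp\|_{\rL_p(\BT,\rH^2_q(\Omega))}$ via the embedding~\eqref{est:interpolation}, while the other factor is kept in $\|\bv_\perp\|_{\rL_p(\BT,\rH^1_s(\Omega))}$.

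The $\rL_q(\Omega)$ bound on $\CN^1(\bv)_S$ follows the same pattern: $\bv_S\cdot\nabla\bv_S$ is controlled by $\|\bv_S\|_{\rL_\infty(\Omega)}\|\nabla\bv_S\|_{\rL_q(\Omega)}$, and both factors are in turn bounded by $\|\nabla^2\bv_S\|_{\rL_s(\Omega)}+\|\nabla^2\bv_S\|_{\rL_q(\Omega)}$ by pairing the assumed $\rL_{2s/(2-s)}$- and $\rL_{4s/(4-s)}$-bounds on $\bv_S$ and $\nabla\bv_S$ with Gagliardo--Nirenberg inequalities that exploit $\nabla^2\bv_S\in\rL_q$ for $q>3$. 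The oscillatory squared contribution is handled exactly as in the previous step. The two bounds on $\CN^1(\bv)_\perp$ are analogous: the mixed terms $\bv_S\cdot\nabla\bv_\perp$ and $\bv_\perp\cdot\nabla\bv_S$ are split by H\"older so that the stationary factor lies in $\rL_\infty(\Omega)$ (controlled as above) and the oscillatory factor in the relevant $\rL_p(\BT;\rL_s(\Omega))$ or $\rL_p(\BT;\rL_q(\Omega))$ norm, whereas the purely oscillatory quadratic terms $\bv_\perp\cdot\nabla\bv_\perp$ are treated by placing one $\bv_\perp$ into $\rL_\infty(\BT;\rL_\infty(\Omega))$ via~\eqref{est:interpolation} and retaining the other in the parabolic norm.

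The main obstacle will be the bookkeeping of the Gagliardo--Nirenberg exponents: verifying that the precise exponent $\theta=(1/s-3/4)/(1/s+1/6-1/q)$ arises, and arranging the intermediate exponents so that all interpolations use only the norms listed in the hypotheses, in particular without invoking an $\rL_q$-bound on $\nabla\bv_S$ that is not given directly but must be recovered by a further interpolation between $\nabla\bv_S\in\rL_{4s/(4-s)}(\Omega)$ and $\nabla^2\bv_S\in\rL_q(\Omega)$.
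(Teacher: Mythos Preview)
Your overall strategy coincides with the paper's: split via~\eqref{nonlinear:1}, apply H\"older in space (and time, with $p>2$), then close with Gagliardo--Nirenberg interpolation and the embedding~\eqref{est:interpolation}. The computation of~$\theta$ for the first estimate and the treatment of the second and fourth estimates are exactly as in the paper.

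There is one discrepancy in your plan for the third estimate, the $\rL_p(\BT;\rL_s)$ bound on $\CN^1(\bv)_\perp$. You propose to place the \emph{stationary} factor in $\rL_\infty(\Omega)$ for both mixed terms. But controlling $\|\bv_S\|_{\rL_\infty}$ or $\|\nabla\bv_S\|_{\rL_\infty}$ forces you to use $\nabla^2\bv_S\in\rL_q$ (since $s\le 4/3<3$), so the first factor in your bound would read $\|\nabla^2\bv_S\|_{\rL_s}+\|\nabla^2\bv_S\|_{\rL_q}$, not just $\|\nabla^2\bv_S\|_{\rL_s}$ as stated in the lemma. The paper avoids this by using different H\"older pairings for the $\rL_s$ estimate: it places $\bv_S$ in $\rL_{3s/(3-2s)}$ and $\nabla\bv_S$ in $\rL_{3s/(3-s)}$, which are exactly the Sobolev conjugates obtainable from $\nabla^2\bv_S\in\rL_s$ alone, and correspondingly puts $\nabla\bv_\perp$ in $\rL_{3/2}$ and $\bv_\perp$ in $\rL_3$. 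Since $s<3/2<3<q$, these oscillatory norms interpolate between $\rH^1_s$ and $\rH^1_q$, which is why \emph{both} appear in the second factor of the stated bound. Your splitting is not wrong, but it proves a slightly different inequality than the one asserted.

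One technical remark: you invoke Gagliardo--Nirenberg ``in $\BR^3$'', but $\Omega$ is an exterior domain, where the inequality is more delicate (no lower-order correction is wanted here). The paper relies on the exterior-domain version due to Crispo and Maremonti~\cite{CrispoMaremonti_GNI_2004}; you should cite or verify that the particular instances you need are covered by that result.
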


\begin{proof} 
Using the Gagliardo--Nirenberg inequality in exterior domains (see~\cite{CrispoMaremonti_GNI_2004})
and Young's inequality,
we obtain
\[
\begin{aligned}
\|\bv_S\|_{\rL_4(\Omega)}
&\leq C \|\bv_S\|_{\rL_{2s/(2-s)}(\Omega)}^{1-\theta}\|\nabla^2\bv_S\|_{\rL_q(\Omega)}^\theta,
\\
\|\bv_S\|_{\rL_\infty(\Omega)}
&\leq C \|\bv_S\|_{\rL_{3s/(3-2s)}(\Omega)}^{1-\theta_1}\|\nabla^2\bv_S\|_{\rL_q(\Omega)}^{\theta_1}
\leq C\big( \|\nabla^2\bv_S\|_{\rL_{s}(\Omega)}+\|\nabla^2\bv_S\|_{\rL_q(\Omega)}\big),
\\
\|\nabla\bv_S\|_{\rL_q(\Omega)}
&\leq C \|\nabla\bv_S\|_{\rL_{3s/(3-s)}(\Omega)}^{1-\theta_2}\|\nabla^2\bv_S\|_{\rL_q(\Omega)}^{\theta_2}
\leq C \big(\|\nabla^2\bv_S\|_{\rL_{s}(\Omega)}+\|\nabla^2\bv_S\|_{\rL_q(\Omega)}\big),
\end{aligned}
\]
with $\theta$ as above, $\theta_1=(1/s-2/3)/(1/s-1/q)$ and $\theta_2=(1/s-1/q-1/3)/(1/s-1/q)$. 
Combining these estimates 
with the interpolation inequality~\eqref{est:interpolation}
for $\bw=\bv_\perp$ and $D=\Omega$,
we obtain the asserted estimates of $\CN(\bv)_S$ directly from H\"older's inequality.
For the estimates of $\CN(\bv)_\perp$,
the H\"older inequality yields
\[
\begin{aligned}
\|\CN^1(\bv)_\perp\|_{\rL_p(\BT;\rL_s(\Omega))}
&\leq \|\bv_S\|_{\rL_{3s/(3-2s)}(\Omega)}\|\nabla\bv_\perp\|_{\rL_p(\BT;\rL_{3/2}(\Omega))}
+\|\bv_\perp\|_{\rL_p(\BT;\rL_3(\Omega))}\|\nabla\bv_S\|_{\rL_{3s/(3-s)}(\Omega)}
\\
&\ \
+ \|\bv_\perp\|_{\rL_\infty(\BT;\rL_\infty(\Omega))}\|\nabla\bv_\perp\|_{\rL_p(\BT;\rL_s(\Omega))}
+ \|\bv_\perp\|_{\rL_\infty(\BT;\rL_\infty(\Omega))}\|\nabla\bv_\perp\|_{\rL_1(\BT;\rL_s(\Omega))},
\\
\|\CN^1(\bv)_\perp\|_{\rL_p(\BT;\rL_q(\Omega))}
&\leq \|\bv_S\|_{\rL_{\infty}(\Omega)}\|\nabla\bv_\perp\|_{\rL_p(\BT;\rL_{q}(\Omega))}
+\|\bv_\perp\|_{\rL_p(\BT;\rL_\infty(\Omega))}\|\nabla\bv_S\|_{\rL_{q}(\Omega)}
\\
&\ \
+ \|\bv_\perp\|_{\rL_\infty(\BT;\rL_\infty(\Omega))}\|\nabla\bv_\perp\|_{\rL_p(\BT;\rL_q(\Omega))}
+ \|\bv_\perp\|_{\rL_\infty(\BT;\rL_\infty(\Omega))}\|\nabla\bv_\perp\|_{\rL_1(\BT;\rL_q(\Omega))}.
\end{aligned}
\]
Since $s\leq 4/3<3/2<3<q$, 
the remaining estimates now follow by Sobolev embeddings and interpolation as before.
\end{proof}

After these preparations, we prove the main theorems of this article. 
We begin with the case $\kappa=0$.

\begin{proof}[Proof of Theorem~\ref{mainthm:Stokes}]
Consider $\CI_{0,\rho}$ as in \eqref{eq:fpset}
with
\[
\|(\bv, \fq)\|_{\CI_0} 
:= \|\pd_t\bv\|_{\rL_p(\BT, \rL_q(\Omega))}
+ \|\bv\|_{\rL_p(\BT, \rH^2_q(\Omega))}
+ \|\nabla\fq\|_{\rL_p(\BT, \rL_q(\Omega))}
+ <\bv>_{p,1} + <\nabla\bv>_{p, 2}. 
\]
Let $(\bv, \fq)\in\CI_{0,\rho}$.
In virtue of Lemma~\ref{lem:rhs.loc} and Lemma~\ref{lem:nonlinest.Stokes},
Theorem~\ref{thm:tpStokes} implies
the existence of a solution
$(\bu,\fp)$ to~\eqref{eq:fpsystem}
such that
\[
\begin{aligned}
\|(\bu, \fp)\|_{\CI_0}
&\leq C\big(
<\bff_S>_3 + <\bF_S>_2 + <\bff_\perp>_{p, 2} + <\bF_\perp>_{p, 1} 
\\
&\qquad\quad
+<\CN^1(\bv)_S>_3 + <\tilde\CN^1(\bv)_S>_2
+<\CN^1(\bv)_\perp>_{p, 2} 
+ <\tilde\CN^1(\bv)_\perp>_{p, 1} 
\\
&\qquad\quad
+ \|\CN^2(\bv)\|_{\rL_p(\BT, \rL_q(\Omega))}
+ \|\CL(\bv,\fq)\|_{\rL_p(\BT, \rL_q(\Omega))}
+\|\bh\|_{\rT_{p,q}(\Gamma\times\BT)}
\big)
\\
&\leq C
\big(\varepsilon^2 + \|(\bv, \fq)\|_{\CI_0}^2 + \varepsilon_0\|(\bv, \fq)\|_{\CI_0}^2 + 
\varepsilon_0 \|(\bv, \fq)\|_{\CI_0} \big) 
\\
&\leq C(\varepsilon^2\rho^{-1} + \rho + \varepsilon_0\rho + \varepsilon_0)\rho,
\end{aligned}
\]
where $C>0$ does not depend on the choice of $(\bv, \fq)$.
Choosing $\rho=\varepsilon$ and $\varepsilon,\varepsilon_0>0$ sufficiently small,
we have $C(\varepsilon^2\rho^{-1} + \rho + \varepsilon_0\rho + \varepsilon_0)<1$,
so that the solution map
$\Xi_0\colon(\bv,\fq)\mapsto(\bu,\fp)$
is a well-defined self-mapping on $\CI_{0,\rho}$.

Since $\CL(\bv,\fq)$ is linear and $\CN(\bv)$ is quadratic
in $(\bv,\fq)$,
similar arguments lead to a constant $C>0$ such that
\[
\begin{aligned}
&\|\Xi_0(\bv_1, \fq_1)-\Xi_0(\bv_2, \fq_2)\|_{\CI_0}
\\
&\quad\leq C\big(
<\CN^1(\bv_1)_S-\CN^1(\bv_2)_S>_3 + <\tilde\CN^1(\bv_1)_S-\tilde\CN^1(\bv_2)_S>_2
+<\CN^1(\bv_1)_\perp-\CN^1(\bv_2)_\perp>_{p, 2} 
\\
&\qquad\quad
+ <\tilde\CN^1(\bv_1)_\perp-\tilde\CN^1(\bv_2)_\perp>_{p, 1} 
+ \|\CN^2(\bv_1)-\CN^2(\bv_2)\|_{\rL_p(\BT, \rL_q(\Omega))}
\\
&\qquad\quad
+ \|\CL(\bv_1,\fq_1)-\CL(\bv_2,\fq_2)\|_{\rL_p(\BT, \rL_q(\Omega))}\big)
\\
&\quad
\leq C  \big(
\|(\bv_1, \fq_1)\|_{\CI_0}
+ \|(\bv_2, \fq_2)\|_{\CI_0}
+\varepsilon_0\|(\bv_1, \fq_1)\|_{\CI_0} 
+\varepsilon_0\|(\bv_2, \fq_2)\|_{\CI_0} 
+\varepsilon_0 ) \|(\bv_1+\bv_2, \fq_1-\fq_2)\|_{\CI_0}
\\
&\quad
\leq C(\rho + \varepsilon_0\rho + \varepsilon_0)
\|(\bv_1-\bv_2, \fq_1-\fq_2)\|_{\CI_0},
\end{aligned}
\]
for all $(\bv_1, \fq_1),\,(\bv_2, \fq_2)\in\CI_{0,\rho}$.
Again we have $\rho=\varepsilon$,
and choosing $\varepsilon,\varepsilon_0>0$ so small that
$C(\varepsilon + \varepsilon_0\varepsilon + \varepsilon_0)<1$,
we see that $\Xi_0$ is also a contraction.
Therefore, the contraction mapping principle
yields existence of a fixed-point of $\Xi_0$,
which is a solution to \eqref{eq:systemref}
with the asserted properties.
\end{proof}

We treat the case $\kappa\neq 0$ in a similar way, starting with the framework 
of functions with anisotropic pointwise decay.

\begin{proof}[Proof of Theorem~\ref{mainthm:Oseen}]
Consider $\CI_{\kappa,\rho}$ as in \eqref{eq:fpset}
with
\[
\begin{aligned}
\|(\bv, \fq)\|_{\CI_{\kappa}} 
&:= \|\pd_t\bv\|_{\rL_p(\BT, \rL_q(\Omega))}
+ \|\bv\|_{\rL_p(\BT, \rH^2_q(\Omega))}
+ \|\nabla\fq\|_{\rL_p(\BT, \rL_q(\Omega))}
\\
&\qquad
+\,|\kappa|^\delta<\bv_S>^w_{1, \delta} 
+ \,|\kappa|^\delta<\nabla \bv_S>^w_{3/2, 1/2+\delta}
+ <\bv_\perp>_{p,1+\delta} + <\nabla\bv_\perp>_{p, 2+\delta}.
\end{aligned}
\]
Let $(\bv, \fq)\in\CI_{\kappa,\rho}$.
In virtue of Lemma~\ref{lem:rhs.loc} and Lemma~\ref{lem:nonlinest.Oseen},
Theorem~\ref{thm:tpOseen} implies
the existence of a solution
$(\bu,\fp)$ to~\eqref{eq:fpsystem}
such that
\[
\begin{aligned}
\|(\bu, \fp)\|_{\CI_\kappa}
&\leq C\big(
<\bff_S>^w_{5/2,1/2+2\delta}
+ <\bff_\perp>_{p, 2+\delta} +<\bF_\perp>_{p,1+\delta}
+<\CN^1(\bv)_S>^w_{5/2, 1/2+2\delta}
\\
&\qquad\quad
{+} <\CN^1(\bv)_\perp>_{p, \ell+1} 
+ <\tilde\CN^1(\bv)_\perp>_{p, \ell} 
+ \|\CN^2(\bv)\|_{\rL_p(\BT, \rL_q(\Omega))}
\\
&\qquad\quad
{+}\ \|\CL(\bv,\fq)\|_{\rL_p(\BT, \rL_q(\Omega))}
+\|\bh\|_{\rT_{p,q}(\Gamma\times\BT)}
\big)
\\
&\leq C
\big(\varepsilon^2|\kappa|^{2\delta} + (1+|\kappa|^{-2\delta})\|(\bv, \fq)\|_{\CI_\kappa}^2 + \varepsilon_0\|(\bv, \fq)\|_{\CI_\kappa}^2 + 
\varepsilon_0 \|(\bv, \fq)\|_{\CI_\kappa} \big) 
\\
&\leq C\big(\varepsilon^2|\kappa|^{2\delta}\rho^{-1} + (1+|\kappa|^{-2\delta})\rho 
+ \varepsilon_0\rho + \varepsilon_0\big)\rho.
\end{aligned}
\]
We further proceed similarly to the proof of Theorem~\ref{thm:tpOseen}.
We choose $\rho=\varepsilon|\kappa|^{2\delta}$.
Then 
the solution map
$\Xi_\kappa\colon(\bv,\fq)\mapsto(\bu,\fp)$
is a self-mapping on $\CI_{\kappa,\rho}$
if 
$C(\varepsilon + \varepsilon(|\kappa|^{2\delta}+1) 
+ \varepsilon_0(1+\varepsilon|\kappa|^{2\delta}))<1
$.
This is the case for all $|\kappa|\leq\kappa_0$
if we choose $\varepsilon,\varepsilon_0>0$ sufficiently small.
Arguing as in the previous proof, 
we further obtain
\[
\begin{aligned}
&\|\Xi_\kappa(\bv_1, \fq_1)-\Xi_\kappa(\bv_2, \fq_2)\|_{\CI_\kappa}
\\
&\quad\leq C\big(
(1+|\kappa|^{-2\delta})(\|(\bv_1, \fq_1)\|_{\CI_\kappa}+\|(\bv_2, \fq_2)\|_{\CI_\kappa})
+\varepsilon_0(1+\|(\bv_1, \fq_1)\|_{\CI_\kappa}+\|(\bv_2, \fq_2)\|_{\CI_\kappa})
\big)
\\
&\qquad\qquad\times
\|(\bv_1-\bv_2, \fq_1-\fq_2)\|_{\CI_\kappa}
\\
&\quad\leq C\big(
\rho
+|\kappa|^{-2\delta}\rho
+ \varepsilon_0+\varepsilon_0\rho \big)\|(\bv_1-\bv_2, \fq_1-\fq_2)\|_{\CI_\kappa}
\end{aligned}
\]
for $(\bv_1, \fq_1),(\bv_2, \fq_2)\in\CI_{\kappa,\rho}$.
With the choice $\rho=\varepsilon|\kappa|^{2\delta}$
we see that $\Xi_\kappa$ is also contractive for all $|\kappa|\leq \kappa_0$
for $\varepsilon,\varepsilon_0>0$ sufficiently small.
Finally, Banach's fixed-point theorem yields an element
$(\bw,\fq)\in\CI_\kappa$ with
$(\bw,\fq)=\Xi_\kappa(\bw,\fq)$,
which completes the proof.
\end{proof}

Finally, we treat the case $\kappa\neq0$ in a framework of homogeneous Sobolev spaces.

\begin{proof}[Proof of Theorem~\ref{mainthm:Oseen.int}]
Consider $\CI_{\kappa,\rho}$ as in \eqref{eq:fpset}
with
\[
\begin{aligned}
\|(\bv, \fq)\|_{\CI_{\kappa}} 
&:= \|\nabla^2\bv_S\|_{\rL_s(\Omega)}
+ |\kappa|^{1/4}\|\nabla\bv_S\|_{\rL_{4s/(4-s)}(\Omega)}
+ |\kappa|^{1/2}\|\bv_S\|_{\rL_{2s/(2-s)}(\Omega)}
+ |\kappa|\,\|\partial_1\bv_S\|_{\rL_s(\Omega)}
\\
&\qquad
+\|\nabla^2\bv_S\|_{\rL_q(\Omega)}
+ \|\pd_t\bv_\perp\|_{\rL_p(\BT, \rL_s(\Omega))}
+ \|\bv_\perp\|_{\rL_p(\BT, \rH^2_s(\Omega))}
+ \|\nabla\fq\|_{\rL_p(\BT, \rL_s(\Omega))}
\\
&\qquad
+ \|\pd_t\bv_\perp\|_{\rL_p(\BT, \rL_q(\Omega))}
+ \|\bv_\perp\|_{\rL_p(\BT, \rH^2_q(\Omega))}
+ \|\nabla\fq\|_{\rL_p(\BT, \rL_q(\Omega))}.
\end{aligned}
\]
Let $(\bv, \fq)\in\CI_{\kappa,\rho}$.
In virtue of Lemma~\ref{lem:rhs.loc} and Lemma~\ref{lem:nonlinest.Oseen.int},
Theorem~\ref{thm:tpOseen.int} implies
the existence of a solution
$(\bu,\fp)$ to~\eqref{eq:fpsystem}
such that
\[
\begin{aligned}
\|(\bu, \fp)\|_{\CI_\kappa}
&\leq C\big(
\|\bff\|_{\rL_{p}(\BT, \rL_s(\Omega))}
+\|\bff\|_{\rL_{p}(\BT, \rL_q(\Omega))}
+\|\CN^1(\bv)\|_{\rL_{p}(\BT, \rL_s(\Omega))}
+\|\CN^1(\bv)\|_{\rL_{p}(\BT, \rL_q(\Omega))}
\\
&\qquad\qquad
+ \|\CN^2(\bv)\|_{\rL_p(\BT, \rL_s(\Omega))}
+ \|\CN^2(\bv)\|_{\rL_p(\BT, \rL_q(\Omega))}
+ \|\CL(\bv,\fq)\|_{\rL_p(\BT, \rL_s(\Omega))}
\\
&\qquad\qquad
+ \|\CL(\bv,\fq)\|_{\rL_p(\BT, \rL_q(\Omega))}
+\|\bh\|_{\rT_{p,s}(\Gamma\times\BT)}
+\|\bh\|_{\rT_{p,q}(\Gamma\times\BT)}
\big).
\end{aligned}
\]
Observe that $C$ is independent of $|\kappa|\leq 1$.
Since $\CN^2(\bv)$ and $\CL(\bv)$ vanish outside $\Omega_{2b}$,
we can use Lemma~\ref{lem:rhs.loc} to estimate
\[
\begin{aligned}
&\|\CN^2(\bv)\|_{\rL_p(\BT, \rL_s(\Omega))}
+ \|\CN^2(\bv)\|_{\rL_p(\BT, \rL_q(\Omega))}
+ \|\CL(\bv,\fq)\|_{\rL_p(\BT, \rL_s(\Omega))}
+ \|\CL(\bv,\fq)\|_{\rL_p(\BT, \rL_q(\Omega))}
\\
&\qquad
\leq C\big(
\|\CN^2(\bv)\|_{\rL_p(\BT, \rL_q(\Omega_{2b}))}
+ \|\CL(\bv,\fq)\|_{\rL_p(\BT, \rL_q(\Omega_{2b}))}
\big)
\leq C\varepsilon_0\big(
\|(\bv, \fq)\|_{\CI_\kappa}  +\|(\bv, \fq)\|_{\CI_\kappa}^2
\big),
\end{aligned}
\]
where we used 
\[
\|\bv_S\|_{\rH^2_q(\Omega_{2b})}
\leq C\big(
\|\bv_S\|_{\rL_{3s/(3-2s)}(\Omega_{2b})}+ \|\nabla^2\bv_S\|_{\rL_q(\Omega_{2b})}
\big)
\leq C\big(
\|\nabla^2\bv_S\|_{\rL_s(\Omega)}+ \|\nabla^2\bv_S\|_{\rL_q(\Omega)}
\big).
\]
We further split $\CN^1(\bv)$ 
into steady-state and oscillatory part and combine 
the estimates from Lemma~\ref{lem:nonlinest.Oseen.int} 
with the previous one to conclude
\[
\begin{aligned}
\|(\bu, \fp)\|_{\CI_\kappa}
&\leq C\big(
\varepsilon^2|\kappa|^{(1+\delta)/2}
+|\kappa|^{-(1-\theta)/2}\|(\bv, \fq)\|_{\CI_\kappa}^2
+\|(\bv, \fq)\|_{\CI_\kappa}^2
+\varepsilon_0(\|(\bv, \fq)\|_{\CI_\kappa}  +\|(\bv, \fq)\|_{\CI_\kappa}^2)
\big)
\\
&\leq C\big(
\varepsilon^2|\kappa|^{(1+\delta)/2}\rho^{-1}
+|\kappa|^{-(1-\theta)/2}\rho
+\rho
+ \varepsilon_0(1+\rho)
 \big)\rho
\end{aligned}
\]
with $\theta$ as in Lemma~\ref{lem:nonlinest.Oseen.int},
so that $\theta\in[0,3/10)$.
We now choose $\rho=\varepsilon |\kappa|^{1/2}$. 
Then the solution map
$\Xi_\kappa\colon(\bv,\fq)\mapsto(\bu,\fp)$
is a self-mapping on $\CI_{\kappa,\rho}$ 
if 
\[
C\big(
\varepsilon|\kappa|^{\delta/2}
+\varepsilon|\kappa|^{\theta/2}
+\varepsilon|\kappa|^{1/2}
+ \varepsilon_0(1+\varepsilon|\kappa|^{1/2})
 \big)<1.
\]
Since the constant $C$ is independent of $\kappa$,
we can take $\varepsilon,\varepsilon_0>0$
so small that this is satisfied
for all $|\kappa|\leq\kappa_0$.
Modifying the previous argument,
we can further show
\[
\begin{aligned}
&\|\Xi_\kappa(\bv_1, \fq_1)-\Xi_\kappa(\bv_2, \fq_2)\|_{\CI_\kappa}
\\
&\leq C\big(
(1+|\kappa|^{-(1-\theta)/2})(\|(\bv_1, \fq_1)\|_{\CI_\kappa}+\|(\bv_2, \fq_2)\|_{\CI_\kappa})
+\varepsilon_0(1+\|(\bv_1, \fq_1)\|_{\CI_\kappa}+\|(\bv_2, \fq_2)\|_{\CI_\kappa})
\big)
\\
&\qquad\qquad\times
\|(\bv_1-\bv_2, \fq_1-\fq_2)\|_{\CI_\kappa}
\\
&\leq C\big(
\rho
+|\kappa|^{-(1-\theta)/2}\rho
+ \varepsilon_0+\varepsilon_0\rho \big)\|(\bv_1-\bv_2, \fq_1-\fq_2)\|_{\CI_\kappa}
\end{aligned}
\]
for some $C>0$ independent of $\kappa$
and for all $(\bv_1, \fq_1),\,(\bv_2, \fq_2)\in\CI_{\kappa,\rho}$.
With $\rho=\varepsilon|\kappa|^{1/2}$, 
we can choose
and $\varepsilon,\varepsilon_0>0$ so small
that $\Xi_{\kappa}$ is a contractive self-mapping
for all $|\kappa|\leq\kappa_0$.
Now Banach's fixed-point theorem 
yields existence of a unique fixed point in $\CI_{\kappa,\rho}$,
which is a solution to \eqref{eq:systemref} as claimed.
\end{proof}

%%%%%%%%%%%%%%%%%%
\small

\end{document}